\documentclass[letter,10pt]{amsart}

%

\usepackage{amsmath,amssymb,amsfonts,amsthm,ascmac, amscd, eucal,setspace}

\usepackage{float}

\usepackage{caption}
\usepackage{subcaption}
\usepackage{hyperref}

\usepackage{url}
\usepackage{latexsym}
\def\qed{\hfill$\Box$}   
\usepackage{enumerate}

\usepackage{tikz-cd}
\usepackage{tikz}
\usetikzlibrary{snakes, 
	3d, matrix,decorations.pathreplacing,calc,decorations.pathmorphing}
\usetikzlibrary{decorations.markings}
\usetikzlibrary {positioning}

\numberwithin{equation}{section}
\allowdisplaybreaks[1]

\captionsetup[subfigure]{labelformat=simple}
\renewcommand\thesubfigure{(\arabic{subfigure})}
\captionsetup[subtable]{labelformat = empty}


\newcommand{\defi}[1]{{\textit{#1}}}
\newcommand{\flag}{{\mathcal{F} \ell}}

\def\C{\mathbb C}
\def\R{\mathbb R}
\def\Q{\mathbb Q}
\def\Z{\mathbb Z}
\def\Ref{E}
\def\Inv{\mathrm{Inv}}
\def\uw{u_w}
\def\Cstar{\mathbb C^{\ast}}
\def\i{\underline{i}}
\def\j{\underline{j}}

\def\al{{\alpha}}
\def\be{{\beta}}
\DeclareMathOperator{\Hom}{Hom}
\DeclareMathOperator{\Int}{Int}
\DeclareMathOperator{\GL}{GL}
\def\a{\mathbf a}
\def\G{\Gamma}
\def\Xw{X_w}
\def\Yw{Y_w}
\def\Ywo{Y_{w_0}}
\def\dwu{d_w(u)}
\def\awu{a_w(u)}
\def\dwuo{d_{w_0}(u)}
\def\awuo{a_{w_0}(u)}
\def\Refw{\Ref_w}
\def\Refwu{\Ref_w(u)}
\def\wideRefwu{\widetilde{\Ref}_w(u)}
\def\Refwou{\Ref_{w_0}(u)}
\def\Gwu{\G_w(u)}
\def\Gw{\G_w}
\def\Dw{D_w}
\def\Dwu{D_w(u)}
\def\Dwou{D_{w_0}(u)}
\def\Cv{C(v)}
\def\Cu{C(u)}
\def\Cbarv{C(\bar{v})}
\def\Cwx{C_w(x)}
\def\Cwy{C_w(y)}
\def\Cvsi{C(vs_i)}

\def\Cbw{C_w}
\def\Cbwu{C_w(u)}
\def\Iw{I_w}
\def\Iwo{I_{w_0}}
\def\Jw{J_{w}}
\def\MO{\mathcal{O}}
\def\oMO{\overline{\MO}}
\def\UZ{\Z^n}
\def\U{\R^n}
\def\V{\R^n}

\newcommand{\Rw}{{R_w}}


\theoremstyle{plain}
\newtheorem{theorem}{Theorem}[section]
\newtheorem{lemma}[theorem]{Lemma}
\newtheorem{proposition}[theorem]{Proposition}
\newtheorem{corollary}[theorem]{Corollary}

\newtheorem{conjecture}[theorem]{Conjecture}

\theoremstyle{definition}
\newtheorem{example}[theorem]{Example}
\newtheorem{definition}[theorem]{Definition}
\newtheorem{remark}[theorem]{Remark}

%
%


%
%
%

\begin{document}
	\tikzset{->-/.style={decoration={
				markings,
				mark=at position #1 with {\arrow{>}}},postaction={decorate}}}

	\author{Eunjeong Lee}
	\address{Center for Geometry and Physics, Institute for Basic Science (IBS), Pohang 37673, Korea}
	\email{eunjeong.lee@ibs.re.kr}
	
	\author{Mikiya Masuda}
	\address{Department of Mathematics, Graduate School of Science, Osaka City University, Sumiyoshi-ku, Sugimoto, 558-8585, Osaka, Japan}
	\email{masuda@sci.osaka-cu.ac.jp}
	
	\thanks{Lee was partially supported by Basic Science 
		Research Program through the National Research Foundation of Korea (NRF) 
		funded by the Ministry of  Science, ICT \& Future Planning 
		(No. 2016R1A2B4010823), and IBS-R003-D1. Masuda was partially supported by JSPS Grant-in-Aid for Scientific Research 16K05152 and the bilateral program \lq\lq Topology and geometry of torus actions, cohomological rigidity, and hyperbolic manifolds\rq\rq~between JSPS and RFBR}
	
	\keywords{Toric variety, Schubert variety, pattern avoidance, Poincar\'{e} polynomial, forest, Bruhat interval polytope} 
	\subjclass[2010]{Primary: 14M25;
		Secondary: 14M15, 05C99}


\title[Generic torus orbit closures in Schubert varieties]{Generic torus orbit closures \\
	in Schubert varieties}
\date{\today}
\maketitle
	
\begin{abstract}
The closure of a generic torus orbit in the flag variety $G/B$ of type $A_{n-1}$ is known to be a permutohedral variety and well studied.  In this paper we introduce the notion of a generic torus orbit in the Schubert variety $\Xw$ $(w\in \mathfrak{S}_n)$ and study its closure $\Yw$.  We identify the maximal cone in the fan of $\Yw$ corresponding to a fixed point $uB$ $(u\le w)$, associate a graph $\Gwu$ to each $u\le w$, and show that $\Yw$ is smooth at $uB$ if and only if $\Gwu$ is a forest.  We also introduce a polynomial $A_w(t)$ for each $w$, which agrees with the Eulerian polynomial when $w$ is the longest element of $\mathfrak{S}_n$, and show that the Poincar\'e polynomial of $\Yw$ agrees with $A_w(t^2)$ when $\Yw$ is smooth.   
\end{abstract}
	
%
%


\section{Introduction} \label{sect:1}
Let $G$ be the general linear group $\GL_{n}(\C)$, let $B \subset G$ be the Borel subgroup of upper triangular matrices, and~let $T \subset B$ be the torus subgroup of diagonal matrices. The left multiplication by~$T$ on $G$ induces the $T$-action on the flag variety $G/B$. The set of $T$-fixed points in~$G/B$ bijectively corresponds to the symmetric group $\mathfrak{S}_n$ on the set $\{1,2,\dots,n\}$. 

Let $\MO$ be a $T$-orbit of an element of $G/B$ and $\oMO$ its closure. It is known that $\oMO$ is normal (\cite[Proposition~4.8]{CK00}), so $\oMO$ is a toric variety. When $\MO$ is {\em generic}, which means that the closure $\oMO$ contains all the $T$-fixed points in $G/B$, $\oMO$ is known to be the permutohedral variety of complex dimension $n-1$. The maximal cones in the fan of the permutohedral variety are the Weyl chambers in type $A_{n-1}$. The permutohedral variety appears in many areas in mathematics and is well studied (see, for instance, \cite{Abe15Young},  \cite{DeMariShayman88}, \cite{Huh14Rota}, \cite{Klyachko85Orbits}, \cite{Postnikov09_Permutohedra} and \cite{Procesi90Toric}). It is smooth and its Poincar\'e polynomial is given by $A_n(t^2)$ where $A_n(t)$ denotes the Eulerian polynomial associated to $\mathfrak{S}_n$. 
In general, when $G$ is a semisimple Lie group  with the Weyl group $W$, one can also consider the toric variety $X(W)$ associated to the Weyl chambers. The corresponding polytope is called $W$-permutohedron in~\cite{FR07_root} and the representation of Weyl group $W$ on the cohomology of $X(W)$ has been studied in~ \cite{DL94character}, \cite{Procesi90Toric}, and \cite{Stembridge94permutation}.

For an element $w \in \mathfrak{S}_{n}$ we denote by $\Xw$ the \defi{Schubert variety} $\overline{BwB/B}$ in $G/B$. The $T$-action on $G/B$ leaves $\Xw$ invariant. We say that a $T$-orbit $\MO$ in $\Xw$ is {\em generic in $\Xw$} if the closure $\oMO$ contains all the $T$-fixed points in $\Xw$. Here the~$T$-fixed points in $\Xw$ are $uB$'s for $u\leq w$ in Bruhat order. When $w$ is the longest element $w_0$ of $\mathfrak{S}_n$, our notion of {\em generic in $\Xw$} agrees with the {\em generic} mentioned in the previous paragraph but it is otherwise different. 

In this paper, we study the closure, denoted by $\Yw$, of a generic $T$-orbit in $\Xw$. 
First we identify maximal cones in the fan of $\Yw$. To each $v\in \mathfrak{S}_n$ we associate a cone in $\R^n$: 
\[
C(v):=\{ (a_1,\dots,a_n)\in \R^n\mid a_{v(1)}\le a_{v(2)}\le \cdots \le a_{v(n)}\}.
\]
These cones projected on the quotient vector space $\R^n/\R(1,\dots,1)$ are exactly the maximal cones in the fan of the permutohedral variety $\Ywo$. To describe the maximal cones in the fan of $\Yw$, 
we introduce an operation $\Rw \colon \mathfrak{S}_n \to \mathfrak{S}_n$, called \defi{a retraction}, which satisfies the following properties:  
\[
\Rw(v)\le w \text{ for any $v$}, \quad \Rw(v)=v \text{ if $v\le w$},\quad \text{and}\quad \Rw(w_0)=w.
\]
(See Section~\ref{sect:3} for the definition of the retraction $\Rw$.) With this understood we have:

\begin{theorem}[Corollary~\ref{coro:maximal_cone}] \label{theo:main1}
	The maximal cone in the fan of $\Yw$ corresponding to a fixed point $uB$ in $\Yw$ is the union of the $C(v)$'s with $\Rw(v)=u$ projected on the quotient vector space $\R^n/F_w$, where the linear subspace $F_w$ of $\R^n$ is determined by the subtorus of $T$ which fixes $\Yw$ pointwise {\rm (}see Remark~\ref{rema:ambient_space} for the definition of $F_w${\rm )}. 
\end{theorem}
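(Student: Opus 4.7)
The plan is to apply the standard orbit-cone correspondence for toric varieties: for $\Yw$, which is the closure of a generic $T$-orbit $\MO$, the maximal cone at a fixed point $uB$ is precisely the set of cocharacters $\lambda \in \R^n$ (modulo $F_w$) whose one-parameter subgroup $\lambda(t)$ satisfies $\lim_{t\to 0}\lambda(t)\cdot x_0 = uB$ for any $x_0 \in \MO$. So it suffices to compute this limit for $\lambda$ in the interior of each chamber $\Cv$ and confirm that the answer is $v'B$, with $v'$ defined by the operation of Section~3.

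First I would recall the model case $w=w_0$: in Pl\"ucker coordinates one has $p_u(\lambda(t)\cdot x_0) = t^{\langle \lambda,\chi_u\rangle}p_u(x_0)$, and for $\lambda$ in the interior of $\Cv$ the weight $\chi_v$ strictly dominates all other $\chi_u$, giving $\lim_{t\to 0}\lambda(t)\cdot x_0 = vB$. This recovers the Coxeter fan of the permutohedral variety $\Ywo$. Next I would transplant this computation inside $\Xw$: since $\Xw$ is cut out by the vanishing of Pl\"ucker coordinates indexed by $u \not\leq w$, a generic $x_0\in\MO$ satisfies $p_u(x_0)\neq 0$ exactly for $u\leq w$, so the limit along $\lambda$ interior to $\Cv$ becomes $u^{\ast}B$, where $u^{\ast}$ is the Bruhat-dominant element of $\{u\leq w\}$ maximizing $\langle \lambda,\chi_u\rangle$ on $\Cv$.

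The core of the argument is then the combinatorial identification $u^{\ast} = v'$. The three properties of the operation listed just before the theorem are consistent with this interpretation: if $v\leq w$ then $v$ itself is already the dominant element, so $v'=v$; some $u\leq w$ always dominates, so $v'\leq w$; and for $v=w_0$ the Bruhat-maximum element $w$ dominates, so $w_0'=w$. Matching these with the recursive definition of $v\mapsto v'$ in Section~3 would identify $u^{\ast}$ with $v'$. Once this is in place, the maximal cone at $uB$ becomes $\bigcup_{v'=u}\Cv$ projected to $\R^n/F_w$, and the projection is forced because the subtorus of $T$ fixing $\Yw$ pointwise consists of those $\lambda(t)$ stabilizing $x_0$, whose cocharacter span is exactly $F_w$.

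The main obstacle will be the identification $u^{\ast} = v'$. Matching the Pl\"ucker-weight-dominance description on $\Xw$ with the recursive combinatorial operation of Section~3 is the real substance of the corollary, and I would expect to prove it by induction on the length of $w$, using how multiplication by a simple reflection $s_i$ interacts with Bruhat order and with adjacent chambers $\Cv$ and $\Cvsi$. Once that combinatorial matching is secured, the remaining bookkeeping---collecting chambers with common $v'$ and projecting modulo $F_w$---is routine.
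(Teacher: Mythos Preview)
Your overall strategy matches the paper's: compute $\lim_{t\to 0}\lambda^{\mathbf a}(t)\cdot x$ in Pl\"ucker coordinates for a generic $x\in X_w$ and $\mathbf a\in\Int(C(v))$, then invoke the Orbit--Cone correspondence. Two points, however, deserve correction.

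First, your framing in terms of a single weight $\chi_u$ attached to each permutation obscures what actually happens. The paper uses the product Pl\"ucker embedding $G/B\hookrightarrow\prod_{d=1}^{n-1}\mathbb{CP}^{\binom{n}{d}-1}$, and the limit is computed \emph{separately in each factor}: in degree $d$, the surviving coordinate is the one indexed by the $d$-subset $\underline i\le w^{(d)}$ minimizing $\sum_{k\in\underline i}a_k$ (the constraint $\underline i\le w^{(d)}$ comes from the vanishing ideal of $X_w$, and non-vanishing at $\underline i\in J_w$ from genericity). Pl\"ucker coordinates are indexed by subsets, not by permutations, so the statement ``$p_u(x_0)\neq 0$ exactly for $u\le w$'' is not quite the right bookkeeping.

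Second, and more importantly, you overestimate the difficulty of the identification $u^\ast=v'$ and propose an induction on $\ell(w)$ that is neither needed nor obviously workable (a generic point of $X_w$ has no evident relation to one of $X_{ws_i}$). The paper's argument is almost a tautology once the definition of $v'$ is unpacked: since $\mathbf a\in\Int(C(v))$ means $a_{v(1)}<\cdots<a_{v(n)}$, the recursive rule defining $v'$ is precisely a greedy algorithm that, at stage $d$, selects the $v(i)$ of smallest index $i$ (hence smallest weight $a_{v(i)}$) subject to $\{v'(1),\dots,v'(d-1),v(i)\}\!\uparrow\le w^{(d)}$. Thus $(v')^{(d)}$ is by construction the minimum-weight $d$-subset with nonzero Pl\"ucker coordinate, and the limit is $v'B$ immediately. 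The ``real substance'' you anticipate is already encoded in the definition of the operation; no induction is required.
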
 

Using this description of the maximal cones (in fact, considering their dual cones), we obtain a criterion of smoothness of $\Yw$ in terms of graphs. Indeed, we associate a graph $\Gwu$ to each $u\le w$ and prove the following. 

\begin{theorem}[Corollary~\ref{cor_Duw_smooth_iff_Guw_forest}] \label{theo:main2}
	The generic torus orbit closure $\Yw$ in the Schubert variety $\Xw$ is smooth at a fixed point $uB$ in $\Yw$ if and only if the graph $\Gwu$ is a forest. Therefore, $\Yw$ is smooth if and only if $\Gwu$ is a forest for every $u\le w$. 
\end{theorem}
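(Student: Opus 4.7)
The plan is to invoke the standard smoothness criterion for toric varieties: the affine chart of $\Yw$ around the fixed point $uB$ is smooth if and only if the corresponding maximal cone $\Dwu$ from Theorem~\ref{theo:main1} is a regular cone, i.e.\ generated by part of a $\Z$-basis of the lattice in $\R^n/F_w$. Equivalently, passing to the dual picture, the primitive inward normals to the facets of $\Dwu$ must form part of a $\Z$-basis of the dual lattice.

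First I would describe the facets of $\Dwu$ explicitly. Since $\Dwu$ is the union of the chambers $C(v)$ with $v'=u$, and each $C(v)$ is cut out by inequalities $a_{v(k)} \le a_{v(k+1)}$, the walls of the induced subdivision lie in hyperplanes of the form $\{a_i = a_j\}$; the facets of the union then come precisely from walls $v\mapsto vs_k$ such that $v' = u$ but $(vs_k)' \ne u$. The primitive normal of such a wall, pushed to the relevant quotient lattice dual to $\R^n/F_w$, is $\pm(e_i - e_j)$ for a pair $\{i,j\}$ determined by $v$ and $k$. This collection of pairs is exactly what the graph $\Gwu$ is designed to record, so the local smoothness question becomes: when do the vectors $\{\,e_i-e_j : \{i,j\}\text{ an edge of }\Gwu\,\}$ form part of a $\Z$-basis of the dual lattice?

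The remaining step is the graph-theoretic translation. By the total unimodularity of the oriented incidence matrix of a graph, the set $\{e_i-e_j : \{i,j\}\in E\}$ is $\Z$-linearly independent in $\Z^n$ modulo $\Z(1,\dots,1)$ if and only if the underlying graph $(V,E)$ is acyclic, and in that case it automatically extends to a $\Z$-basis of the ambient lattice. Applying this to $E=E(\Gwu)$ yields the local criterion, and the global statement is immediate since smoothness of $\Yw$ is equivalent to smoothness at every $T$-fixed point $uB$ with $u\le w$.

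I expect the main obstacle to be the bookkeeping in the quotient $\R^n/F_w$: one must verify that the facets of the \emph{union} $\Dwu$ correspond bijectively to the edges of $\Gwu$ (rather than to a larger collection of walls that happen to lie internal to the union), and that the unimodularity conclusion descends correctly to the possibly nonstandard sublattice determined by $F_w$. Once these points are pinned down, the argument is essentially the composition of the toric smoothness criterion with the well-known forest criterion for graphic matroids.
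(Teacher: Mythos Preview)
Your overall architecture is correct and matches the paper's: reduce smoothness at $uB$ to regularity of the corresponding cone, identify the generating set of the dual cone with the edge set of $\Gamma_w(u)$, and then invoke the forest criterion for vectors of the form $e_i-e_j$. Your total-unimodularity argument for the last step is a legitimate (and slightly slicker) alternative to the paper's relabeling proof of Lemma~\ref{lem_Dw_simplicial_iff_smooth}.

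The real gap is exactly the step you flag as ``bookkeeping'', and it is not bookkeeping. The graph $\Gamma_w(u)$ is \emph{not} defined as the set of facet normals of the maximal cone; it is defined intrinsically via Bruhat order: one first sets
\[
\widetilde E_w(u)=\{(u(i),u(j))\mid i<j,\ t_{u(i),u(j)}u\le w,\ |\ell(u)-\ell(t_{u(i),u(j)}u)|=1\},
\]
then discards the decomposable elements to obtain $E_w(u)$, and $\Gamma_w(u)$ is the associated graph. Showing that the cone $D_w(u)$ spanned by $\{e_b-e_a:(a,b)\in E_w(u)\}$ is precisely the dual of the maximal cone $C_w(u)=\bigcup_{v'=u}C(v)$ is the content of Proposition~\ref{prop_dual_Cuw}, and its proof is substantial: it requires Lemma~\ref{lemm:Cvcontained} (an induction along right weak order to get $C_w(u)\subset D_w(u)^\vee$), the GKM-type Lemma~\ref{lemm:6-1} (adjacent maximal cones differ by a transposition), and Lemma~\ref{lemm:6-2} (if $t_{u(j),u(k)}u\le w$ then $e_{u(k)}-e_{u(j)}\in D_w(u)$, handled separately for inversions and non-inversions). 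Your description ``the facets of the union come from walls $v\mapsto vs_k$ with $v'=u$ but $(vs_k)'\neq u$'' is correct as far as it goes, but many such walls share the same normal direction, and tying the resulting set of \emph{distinct} normals to the Bruhat-theoretic set $E_w(u)$ is where the work lies.

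Two smaller points. First, a notational collision: in the paper $\Dwu$ is the \emph{dual} of the maximal cone, while the maximal cone itself is $C_w(u)$; your proposal uses $\Dwu$ for the latter. Second, your worry about the quotient lattice $\R^n/F_w$ is not an issue: the paper simply works in $\R^n$ throughout (Remark~\ref{rema:ambient_space}), and the vectors $e_b-e_a$ already lie in the correct hyperplane, so unimodularity there is enough.
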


\noindent 
To our surprise, the graph $\Gwu$ with $u=w$, abbreviated as $\Gw$, has been studied in \cite{bm-bu07} and the following three conditions are shown to be equivalent by the results~\cite[Theorem~1.1]{bm-bu07} and~\cite[Proposition~2]{wo-yo06}:
\begin{enumerate}
	\item $\Gw$ is a forest.
	\item $w$ avoids the patterns $4231$ and $45{\bar 3}12$ (see Section~\ref{sect:5}).
	\item $\Xw$ is factorial.
\end{enumerate} 
Looking at examples of $\Gwu$, we conjecture that if $\Gw$ is a forest, then so is $\Gwu$ for any $u\le w$, in other words, $\Yw$ is smooth if $\Yw$ is smooth at $wB$ (Conjecture~\ref{conj_smoothness_of_Yw}). 
One can also see that the fixed point $id B$ in $Y_w$, where $id$ denotes the identity permutation in $\mathfrak{S}_n$, is smooth in $Y_w$ for any $w\in \mathfrak{S}_n$ (Corollary~\ref{cor_Duw_smooth_iff_Guw_forest}).  These are in sharp contrast with the Schubert variety $X_w$ because the fixed point $wB$ is smooth in $X_w$ for any $w\in  \mathfrak{S}_n$ and $X_w$ is smooth if and only if $id B$ is smooth in $X_w$ (see \cite[p. 208]{BL20Singular}).      
This sharp contrast shows that if $X_w$ is a toric variety, i.e., $X_w=Y_w$, then $X_w$ is smooth because it is smooth at $id B$.  Moreover, the smoothness at $id B$ implies the result in \cite{Karu13Schubert} that $X_w$ is a toric variety if and only if $w$ is a product of distinct simple reflections (Corollary~\ref{coro:toric_Schubert}).  

There is a moment map $\mu\colon G/B\to \R^n$ whose image is the convex hull of points $(u^{-1}(1),\dots,u^{-1}(n))$ in $\R^n$ for all $u\in \mathfrak{S}_n$, that is, the permutohedron of dimension $n-1$. It follows from \cite[Theorem 2]{Atiyah82} that $\mu(\Yw)$ is the convex hull of points $(u^{-1}(1),\dots,u^{-1}(n))$ in $\R^n$ for all $u\le w$, which is the Bruhat interval polytope $Q_{id,w^{-1}}$ in \cite{ts-wi15}. 
Note that the normal fan of the Bruhat interval polytope $Q_{id, w^{-1}}$ agrees with the fan of  $Y_w$.
One can see that the graph $\Gwu$ is a forest for every $u\le w$ if and only if the polytope $Q_{id,w^{-1}}$ is simple.   
Our conjecture is equivalent to saying that the polytope $Q_{id,w^{-1}}$ is simple if $\mu(wB)=(w^{-1}(1),\dots,w^{-1}(n))$ is a simple vertex.   

We also study the Poincar\'e polynomial of $\Yw$. We introduce a polynomial $A_w(t)$ for each $w$ in a purely combinatorial way. The polynomial $A_w(t)$ agrees with the Eulerian polynomial $A_n(t)$ when $w=w_0$, so the following theorem generalizes a known result on the Poincar\'e polynomial of the permutohedral variety $\Ywo$. 

\begin{theorem}[Theorem~\ref{theo:Poincare}] \label{theo:main3} 
	If $\Yw$ is smooth, then the Poincar\'e polynomial of $\Yw$ agrees with $A_w(t^2)$ and hence the polynomial $A_w(t)$ is palindromic and unimodal.
\end{theorem}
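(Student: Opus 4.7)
The plan is to compute $P_{\Yw}(t)$ using the Bia\l{}ynicki-Birula (or Morse-theoretic) decomposition of the smooth projective toric variety $\Yw$ induced by a generic one-parameter subgroup $\xi\colon \Cstar\to T$. Under such a $\xi$, the $T$-fixed points $uB$ ($u\le w$) are the isolated critical points of the corresponding Morse function, and smoothness guarantees that each Bia\l{}ynicki-Birula cell at $uB$ is an affine space whose complex dimension equals the number of $\xi$-negative weights of the $T$-action on the tangent space $T_{uB}\Yw$. In particular, the odd Betti numbers of $\Yw$ vanish and
\[
P_{\Yw}(t)=\sum_{u\le w} t^{2\,\dwu},
\]
where $\dwu$ is the count of $\xi$-negative tangent weights at $uB$.

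To carry out the plan I would first choose $\xi$ in general position so that all tangent weights at every $uB$ are nonzero. A convenient choice is the image of a vector in the interior of the chamber $C(w_0)$ under the projection $\R^n\to \R^n/F_w$ used in Theorem~\ref{theo:main1}. Next, using that description of the fan together with the smoothness hypothesis (equivalently, by Theorem~\ref{theo:main2}, the condition that each $\Gwu$ is a forest, so the fan is simplicial), I would identify the primitive edge vectors of the maximal cone at $uB$ with the primitive vectors along the edges of the Bruhat interval polytope $Q_{id,w^{-1}}$ emanating from the vertex $\mu(uB)=(u^{-1}(1),\dots,u^{-1}(n))$. A direct inspection should then match the $\xi$-negative count with the combinatorially defined $\dwu$ appearing in the formula for $A_w(t)$, yielding $P_{\Yw}(t)=A_w(t^2)$.

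Once this identification is in place, palindromicity $t^{d}A_w(1/t)=A_w(t)$ with $d=\dim_{\C}\Yw$ follows from Poincar\'e duality on the smooth projective variety $\Yw$. Unimodality is a consequence of the hard Lefschetz theorem applied to a K\"ahler class $\omega\in H^2(\Yw;\R)$: for each $k$ with $2k\le d$, multiplication by $\omega^{d-2k}$ yields an isomorphism $H^{2k}(\Yw;\R)\to H^{2(d-k)}(\Yw;\R)$, which forces the sequence $\dim H^{2k}(\Yw)$ to be non-decreasing on $0\le k\le d/2$.

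The main obstacle is the matching step in the middle paragraph: verifying that with the chosen $\xi$ the number of $\xi$-negative tangent weights at $uB$ equals precisely the combinatorial $\dwu$ built into the definition of $A_w(t)$. For $w=w_0$ this is the classical identification of descents of $u$ with down-edges of the permutohedron at $\mu(uB)$; for general $w$ the argument must absorb the glueing of several chambers $C(v)$ with $v'=u$ into a single maximal cone at $uB$ described in Theorem~\ref{theo:main1}, and one must verify that this glueing is compatible with $\xi$ so that the count is not disturbed. This is where the explicit combinatorics from Section~\ref{sect:3} enters, and the check is expected to be the most delicate part of the argument.
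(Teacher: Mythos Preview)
Your approach is essentially the paper's: a generic one-parameter subgroup, Bia\l{}ynicki--Birula cells indexed by the fixed points $uB$, then Poincar\'e duality and hard Lefschetz for palindromicity and unimodality. Two corrections are worth making.

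First, a notational slip: $A_w(t)$ is defined via $\awu=\#\{(u(i),u(j))\in\Refwu\mid u(i)<u(j)\}$, not $\dwu$. With your choice of $\xi$ in the interior of $C(w_0)$ (so $a_1>a_2>\cdots>a_n$), the pairing of the tangent weight $e_{u(j)}-e_{u(i)}$ with $\xi$ is $a_{u(j)}-a_{u(i)}$, which is negative precisely when $u(i)<u(j)$; hence your ``$\xi$-negative count'' is $\awu$, not $\dwu$. (The paper instead takes $a_1<\cdots<a_n$ and counts \emph{positive} weights, arriving at the same $\awu$.)

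Second, you overstate the difficulty of the ``matching step.'' The glueing of the chambers $C(v)$ with $v'=u$ into $\Cbwu$ has already been handled once and for all by Proposition~\ref{prop_dual_Cuw}, which identifies the dual cone as $\Dwu=\mathrm{cone}\{e_{u(j)}-e_{u(i)}:(u(i),u(j))\in\Refwu\}$. Smoothness (equivalently, $\Gwu$ a forest) makes $\Dwu$ simplicial and nonsingular, so these generators are exactly the tangent weights at $uB$. The identification with $\awu$ is then a one-line computation as above; no further ``delicate check'' involving the fan description from Section~\ref{sect:3} is needed.
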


\noindent
When $w=4231$ or $3412$, $Y_w$ is singular and $A_w(t)$ is not palindromic but $A_w(t^2)$ still agrees with the Poincar\'e polynomial of $Y_w$, so it would be interesting to ask whether $A_w(t^2)$ agrees with the Poincar\'e polynomial of $Y_w$ for any $w\in \mathfrak{S}_n$ and  palindromicity of $A_w(t)$ implies smoothness of $\Yw$. 

This paper is organized as follows. In Section~\ref{sect:2}, we discuss generic orbits and generic points in the Schubert variety $\Xw$. In Section~\ref{sect:3} we introduce the operation on $\mathfrak{S}_n$ and prove Theorem~\ref{theo:main1}.  In Section~\ref{sec_right_weak_order} we show that the set $\{v\in \mathfrak{S}_n\mid \Rw(v)=u\}$ for $u\le w$ is an interval of the right weak Bruhat order.  In Section~\ref{sect:4} we identify the cone dual to the maximal cone in the fan of $Y_w$ corresponding to $wB$. In Section~\ref{sect:5} we associate the graph $\Gw$ to the dual cone and discuss simpliciality of the dual cone. In addition, we prove that simpliciality and smoothness are the same in our case.  In Section~\ref{sect:6} we discuss the smoothness of $Y_w$ at the other fixed points $uB$ $(u<w)$. Indeed, we identify the cone dual to the maximal cone in the fan of~$Y_w$ corresponding to $uB$, introduce the graph $\Gwu$, and prove Theorem~\ref{theo:main2}. We also discuss when $X_w$ is a toric variety.  In Section~\ref{sect:7} we introduce the polynomial $A_w(t)$ and prove Theorem~\ref{theo:main3}.  In \ref{section_acyclicity_and_avoidance} we give an alternative proof to the pattern avoidance criterion of when $\Gw$ is a forest. 
In \ref{section_retraction_sequence_poincare_polynomial} we compute Poincar\'{e} polynomials of $Y_w$ when $w = 4231$ and $3412$ using retraction sequences of polytopes.

\medskip
\noindent
{\bf Acknowledgment.}  
We would like to thank the referees who read the paper carefully and gave valuable suggestions and comments.
We thank Seonjeong Park for bringing the papers \cite{bm-bu07} and \cite{ts-wi15} to our attention.  We also thank Hiraku Abe, Hiroaki Ishida, Tatsuya Horiguchi, Svjetlana Terzi\'{c}, Anatol Kirillov and Jongbaek Song for their interest in our work and helpful conversations, Jim Carrell for his valuable comments, and Masashi Noji for his computer program to find the graph $\Gamma_w(u)$.  Lee thanks Professor Dong Youp Suh
for his support throughout the project.

\section{Generic points in Schubert varieties} \label{sect:2}
Let $G = \GL_{n}(\C)$, $B \subset G$ the Borel subgroup of upper triangular matrices and $T \subset B$ the torus subgroup of diagonal matrices. Let $\mathfrak{S}_n$ be the symmetric group on the set $\{1,2,\dots,n\}$.  An element $w$ of $\mathfrak{S}_n$ defines a permutation matrix $[e_{w(1)} \ \cdots \ e_{w(n)}]$ where $e_1,\dots,e_n$ denote the standard column vectors in $\R^n$.  Through this correspondence, we think of an element of $\mathfrak{S}_n$ as an element of $G$.
For an element $w \in \mathfrak{S}_{n}$ we denote the \defi{Schubert variety} $\overline{BwB/B}$ in the flag variety $G/B$ by $\Xw$. {The left multiplication by $T$ on $G$ induces a $T$-action on $G/B$ which leaves $\Xw$ invariant. The set of $T$-fixed points in $G/B$ bijectively corresponds to the symmetric group $\mathfrak{S}_{n}$ through the map $v\in \mathfrak{S}_n\to vB\in G/B$ and $vB$ lies in $\Xw$ if and only if $v\leq w$ in Bruhat order (see \cite[\S 10.5]{Fulton97Young}).}

\begin{definition}
	{We call a $T$-orbit in $\Xw$ \textit{generic} if its closure contains all the $T$-fixed points in $\Xw$ and call a point in $\Xw$ \textit{generic} if it is in a generic $T$-orbit. We will denote the closure of a generic $T$-orbit in $\Xw$ by $\Yw$.} 
\end{definition}

We will see in Proposition~\ref{prop:genric_point} that generic points in $X_w$ form a Zariski open subset of $X_w$.
We describe these generic points in the Schubert variety $\Xw$ using the Pl\"{u}cker coordinates. 
To introduce the Pl\"{u}cker coordinate, we define the set
\[
I_{d,n} = \{ \underline{i} = (i_1,\dots,i_d) \in \Z^{d} 
\mid 1 \leq i_1 < \cdots < i_d \leq n\}.
\]
For an element $x = (x_{ij}) \in G=\GL_n(\C)$, the $\underline{i}$th Pl\"{u}cker coordinate $p_{\underline{i}}(x)$ of $x$ is given by the $d \times d$ minor of $x$, with row indices $i_1,\dots,i_d$ and the column indices $1,\dots,d$ for $\underline{i} =(i_1,\dots,i_d)  \in I_{d,n}$. 
Then the Pl\"{u}cker embedding $\psi$ is defined to be
\begin{equation}\label{eq_Plucker_embedding}
\psi \colon G/B \to \prod_{d=1}^{n-1} \mathbb{C}P^{{n\choose d}-1},
\quad xB \mapsto \prod_{d=1}^{n-1} (p_{\underline{i}}(x))_{\underline{i} \in I_{d,n}}.
\end{equation}
One can see that the Pl\"{u}cker embedding $\psi$ is well-defined.
The map $\psi$ is $T$-equivariant with respect to the action of 
$T$ on $\prod_{d=1}^{n-1} \mathbb{C} P^{{n\choose d}-1}$ given by
\[
(t_1,\dots,t_n) \cdot (p_{\i})_{\i \in I_{d,n}}
:= (t_{i_1}\cdots t_{i_d} \cdot p_{\i})_{\i \in I_{d,n}}
\]
for $(t_1,\dots,t_n) \in T$ and $\i = (i_1,\dots,i_d)$.

\begin{example}\label{example_Plucker_GL3}
	Suppose that $G = \GL_3(\mathbb{C})$. Then the Pl\"{u}cker embedding 
	$\psi \colon {G/B} \to \C {P}^{{3\choose 1}-1} \times \C {P}^{{3\choose 2}-1}$ maps an element $xB \in G/B$	to 
	\begin{align*}
	&([p_1(x),p_2(x),p_3(x)], [p_{1,2}(x), p_{1,3}(x), p_{2,3}(x)]) \\
	&\qquad = ([x_{11}, x_{21}, x_{31}], [x_{11} x_{22} - x_{21}x_{12}, x_{11}x_{32} - x_{31} x_{12}, x_{21}x_{32} - x_{31}x_{22}]).
	\end{align*}
	Here, $x = (x_{ij}) \in \GL_3(\C)$.
	Since the action of $T$ on $\GL_3(\C)$ is given by
	\begin{align*}
	(t_1,t_2,t_3) \cdot \begin{pmatrix}
	x_{11} & x_{12} & x_{13} \\
	x_{21} & x_{22} & x_{23} \\
	x_{31} & x_{32} & x_{33}
	\end{pmatrix}
	= \begin{pmatrix}
	t_1 x_{11} & t_1 x_{12} & t_1 x_{13} \\
	t_2 x_{21} & t_2 x_{22} & t_2 x_{23} \\
	t_3 x_{31} & t_3 x_{32} & t_3 x_{33}
	\end{pmatrix},
	\end{align*}
	{one can easily check that the map $\psi$ is $T$-equivariant.}
\end{example}

Given $\underline{i} = (i_1,\dots,i_d), \underline{j} = (j_1,\dots,j_d)
\in I_{d,n}$, define a partial order $\geq$ on $I_{d,n}$ by
\begin{equation}\label{equation_partial_order}
\underline{i} \geq \underline{j} 
\iff i_t \geq j_t \text{ for all } 1 \leq t \leq d.
\end{equation}
With this partial order, it is known from \cite[Theorem 3.2.10]{BL20Singular} that the ideal sheaf of the Schubert variety $\Xw$ is generated by $\{p_{\underline{i}} \mid \underline{i}\in \Iw\}$ where
\[
\Iw := \bigcup_{1 \leq d \leq n-1} \{ \underline{i} \in I_{d,n}\mid w^{(d)} \ngeq \underline{i}\}.
\]
Here $w^{(d)}$ denotes the ordered $d$-tuple obtained from $\{w(1),\dots,w(d)\}$ by arranging its elements in ascending order. It follows that 
\begin{equation} \label{eq:\Iw}
{\underline{i}} \in \Iw \implies x_{\underline{i}} := p_{\underline{i}}(x) = 0 \quad\text{for any $xB\in \Xw$}.
\end{equation}

Now we introduce
\begin{equation} \label{eq:\Jw}
\Jw := 
\bigcup_{1 \leq d \leq n-1}
\{ \underline{j} \in I_{d,n} \mid \underline{j} = v^{(d)} \text{ for some } v \leq w \}.
\end{equation}
Then $p_{\j}$ for $\j\in \Jw$ is not identically zero on $\Xw$ because if $\j=v^{(d)}$ for some $v\le w$, then $p_{\j}(v)=\pm 1$. Since the set of points $xB\in \Xw$ with $p_{\j}(x)=0$ is of codimension one in $\Xw$ and $|\Jw|$ is finite, there exists a point $xB\in \Xw$ such that $x_{\j}\not=0$ for any $\j\in \Jw$. 
We will see in Proposition~\ref{prop:genric_point} that such a point $xB$ is generic in $\Xw$.

\begin{example}\label{example_generic_pt_312}
	Let $w=312 \in \mathfrak{S}_3$ in the one-line notation. Then we have 
	$w^{(1)} = (3),~w^{(2)} = (1,3),~w^{(3)} = (1,2,3)$.
	Hence $p_{\i}=0$ on $\Xw$ if $\underline{i} = (2,3)$, and the following points 
	\[
	x =\begin{pmatrix}
	\al & 1 & 0 \\ \be & 0 & 1 \\ 1 & 0 & 0 \end{pmatrix} \qquad (\al, \be\in \C\backslash\{0\})
	\]
	satisfy that $x_{\j} \not=0$ for any $\j\in \Jw=\{(1), (2), (3), (1,2), (1,3)\}$. Hence such a point $xB$ is generic in $X_{312}$.
\end{example}

\begin{remark}
	In the paper~\cite{GGMS87} by I. M. Gelfand, M. Goresky, R. D. MacPherson, and V. V. Serganova, they study 
	the polytopes arising from the closure of torus orbits in the Grasmannian $\textup{Gr}_{k,n}$, including generic points of Schubert varieties in Grasmannian $\textup{Gr}_{k,n}$. 
	These polytopes are matroid polytopes for representable matroids (for more details, see~\cite{BGW03_Coxeter_matroids} and~\cite{White86_Theory_of_matroids}).
\end{remark}

\begin{remark}\label{rmk_moment_map}
The pull-back $\psi^{\ast}(\omega)$ of the Fubini--Study form $\omega$ in the product $\prod_{d=1}^{n-1} \C P^{{n \choose {d} } -1}$ of projective spaces is a symplectic form on $G/B$. With respect to this symplectic form, there is a map called \defi{moment map}
\[
\mu\colon G/B\to \V
\]
such that 
$\mu(uB)=(u^{-1}(1),\dots,u^{-1}(n))$ (see~\cite[\S5.1]{GS87_Combinatorial} and~\cite[\S3]{LMP_BIP}).
Then $\mu(G/B)$ is the convex hull of $\mu(uB)$'s for all $u\in \mathfrak{S}_n$, that is the permutohedron of dimension $n-1$. The image $\mu(\Yw)$ is the convex hull of $\mu(uB)$'s for all $u\le w$ by \cite[Theorem 2]{Atiyah82}. Indeed, we have that
\[
\mu(Y_w) = \text{Conv}\{(u^{-1}(1),\dots,u^{-1}(n)) \mid u \leq w\}
\]
which agrees with \defi{the Bruhat interval polytope $Q_{id, w^{-1}}$} in \cite{ts-wi15}. Hence the fan of $Y_w$ is the normal fan of the polytope $Q_{id,w^{-1}}$. In general, for $v \leq w$, the Bruhat interval polytope $Q_{v,w}$ is defined by
\begin{equation}\label{eq_def_of_Qvw}
Q_{v,w} = \text{Conv}\{(u(1),\dots,u(n)) \mid v \leq u \leq w\}.
\end{equation}
\end{remark}

\section{Fan of the generic torus orbit closure $\Yw$} \label{sect:3}

The set $\Hom(\C^*,T)$ of algebraic homomorphisms from $\C^*=\C\backslash\{0\}$ to $T$ forms an abelian group under the multiplication of $T$ and $\Hom(\C^*,T)$ is isomorphic to $\Z^n$ through the correspondence 
\begin{equation} \label{eq:HomCT}
(a_1,\dots,a_n) \in \Z^n \to \big( t\mapsto (t^{a_1},\dots,t^{a_n})\big)\in \Hom(\C^*,T).
\end{equation}
We identify $\Hom(\C^*,T)$ with $\Z^n$ through \eqref{eq:HomCT} and $\Hom(\C^*,T)\otimes\R$ with $\R^n$ but we often denote the element of $\Hom(\C^*,T)$  corresponding to $\a\in \Z^n$ by $\lambda^\a$.  

\begin{remark} \label{rema:ambient_space}
	Since the action of $T$ on $\Yw$ is not effective, the ambient space of the fan of $\Yw$ is the quotient of $\Hom(\C^*,T)\otimes\R$ by the subspace $\Hom(\C^*,T_w)\otimes\R$, where $T_w$ is the toral subgroup of $T$ which fixes $\Yw$ pointwise ($F_w$ in Theorem~\ref{theo:main1} is $\Hom(\C^*,T_w)\otimes \R$). However, for simplicity, we will think of $\R^n$ as the ambient space of the fan of $\Yw$ throughout the paper.
\end{remark}

We will find the maximal cones in the fan of $\Yw$ using the Orbit-Cone correspondence (see~\cite[Proposition~3.2.2.]{CLS11Toric}). Namely, we observe the limit point $\lim_{t\to 0}\lambda^\a(t)\cdot x$ for a generic point $x\in \Xw$ and $\a\in \UZ$. It is known and not difficult to see that $\Ywo$, where $w_0$ is the longest element of $\mathfrak{S}_n$, is the permutohedral variety of complex dimension $n-1$ and the cones  
\begin{equation}
\Cv:=\{ (a_1,\dots,a_n)\in \U \mid a_{v(1)}\le a_{v(2)}\le \cdots\le a_{v(n)}\}
\end{equation}
for $v\in\mathfrak{S}_n$ are the maximal cones in the fan of $\Ywo$. Unless $w=w_0$, we will see that a maximal cone in the fan of $\Yw$ is the union of some of $\Cv$'s (Corollary~\ref{coro:maximal_cone}).  Here is an example which shows how to find the maximal cones in the fan of $\Yw$.

\begin{example}\label{exam:w312}
	Suppose that $G = \GL_3(\mathbb{C})$ and $w = 312 \in \mathfrak{S}_3$.
	Take the (generic) point $xB \in X_w$ as in Example~\ref{example_generic_pt_312}. 
	Then Example~\ref{example_Plucker_GL3} shows that for $\a= [a_1,a_2,a_3] \in \Z^3$, the corresponding curve in the product $\C P^2 \times \C P^2$ of projective spaces is given by
	\begin{align*}
	\lambda^\a(t) \cdot \psi(xB) &= 
	(t^{a_1}, t^{a_2}, t^{a_3}) \cdot ([\al,\be,1], [-\be, -1, 0])\\
	&= ([t^{a_1}\al, t^{a_2}\be, t^{a_3}],
	[-t^{a_1+a_2}\be, -t^{a_1+a_3}, 0]).
	\end{align*}
	Take $v=321$. Then for any $\a = (a_1,a_2,a_3) \in \text{Int}(C(321)) \cap \Z^3$ (so that $a_3<a_2<a_1$), we have that
	\begin{align*}
	\lim_{t \to 0} \lambda^\a(t) \cdot \psi(xB)
	&= \lim_{t \to 0} ([t^{a_1}\al, t^{a_2}\be, t^{a_3}],
	[-t^{a_1+a_2}\be, -t^{a_1+a_3}, 0]) \\
	&= \lim_{t \to 0}
	([t^{a_1-a_3}\al, t^{a_2-a_3}\be, 1],
	[t^{a_2-a_3}\be, 1, 0]) \\
	&= ([0,0,1],[0,1,0]) \\
	&= \psi(312{B}).
	\end{align*}
	A similar argument shows that the limit point corresponding to each $\Cv$ is as follows:
	\begin{align*}
	C({123}) \colon ([1,0,0],[1,0,0]), \quad
	C({132}) \colon ([1,0,0], [0,1,0]), \quad
	C({213}) \colon ([0,1,0],[1,0,0]),\\
	C({231}) \colon ([0,1,0],[1,0,0]), \quad
	C({312}) \colon ([0,0,1],[0,1,0]), \quad
	C({321})\colon ([0,0,1],[0,1,0]).
	\end{align*}
	Hence there are four limit points and accordingly the fan of $\Yw$ consists of four maximal cones:
	\[
	C({123}),\ C({132}),\ C({213}) \cup C({231}),\ C({312}) \cup C({321}).
	\]
	See Figure~\ref{fig_fan_312}. 
 Here, the ambient space of the fan of $Y_w$ is the quotient space $\R^3/\langle (1,1,1) \rangle$. The identification $\R^3/\langle(1,1,1) \rangle \stackrel{\cong}{\longrightarrow} \R^2$ given by $[a_1,a_2,a_3] \mapsto (a_1-a_2, a_2-a_3)$ is well-defined. Using this identification, we draw Figures~\ref{fig_fan_Fl3} and~\ref{fig_fan_312}. For instance, the cone $C(213)$ consists of points $[a_1,a_2,a_3]$ satisfying $a_2 \leq a_1 \leq a_3$. Hence it corresponds to the set of points $\{(b_1,b_2)\in\R^2 \mid b_1 \geq 0, b_1+b_2 \leq 0\}$ under the identification.
	We note that $\Yw$ is a Hirzebruch surface $\C P^2 \# \overline{\C P^2}$. In this case, the Schubert variety $X_{312}$ itself is a toric variety, i.e., $X_{312} = Y_{312}$. We will discuss when $X_w$ is a toric variety in general in Section~\ref{sect:6}.   
\end{example}
\begin{figure}
	\begin{minipage}{0.55\textwidth}
		\centering
		\begin{tikzpicture}
		\draw(-2,0)--(2,0);
		\draw(0,-2)--(0,2);
		\draw(-2,2)--(2,-2);
		
		\node at (-1,-1) {\small $C(123)$};
		\node at (0.6,-1.3) {\small $C(213)$};
		\node at (1.3,-0.5) {\small $C(231)$};
		\node at (1,1) {\small $C(321)$};
		\node at (-0.6,1.3) {\small $C(312)$};
		\node at (-1.3,0.5) {\small $C(132)$};

		\end{tikzpicture}
		\caption{Cones $\Cv$ for $v \in \mathfrak{S}_3$.} 
		\label{fig_fan_Fl3}
	\end{minipage}~
	\begin{minipage}{0.45\textwidth}
		\centering
		\begin{tikzpicture}

		\filldraw[fill=yellow!40!white, draw opacity = 0] (-2,2)--(2,2)--(2,0)--(0,0)--cycle;
		\filldraw[fill=green!10!white, draw opacity = 0] (-2,2)--(0,0)--(-2,0)--cycle;
		\filldraw[fill=blue!10!white, draw opacity = 0] (-2,0)--(0,0)--(0,-2)--(-2,-2)--cycle;
		\filldraw[fill=purple!10!white, draw opacity = 0] (0,0)--(2,0)--(2,-2)--(0,-2)--cycle;

 		\draw (-2,0)--(2,0);
\draw (0,0)--(0,-2);
\draw (-2,2)--(0,0);
\draw[dotted] (0,2)--(0,0);
\draw[dotted] (0,0)--(2,-2);		
		
		\node at (-1,-1) {\small $C(123)$};
		\node at (1.3,-1) {\small $C(213) \cup C(231)$};
		\node at (0.5,1) {\small $C(312) \cup C(321)$};
		\node at (-1.3,0.5) {\small $C(132)$};

		\end{tikzpicture}
		\caption{The fan of $Y_{312}$.}
		\label{fig_fan_312}
	\end{minipage}
\end{figure}

In the previous example, we saw that any integral point $\mathbf{a}$ in the relative interior of $C(v)$ has the same limit point $\lim_{t \to 0}\lambda^{\mathbf a}(t) \cdot \psi(xB)$. Moreover, each limit point is the moment map image of $uB$ for some $u \in \mathfrak{S}_n$. 
Motivated by this observation, 
we introduce an operation $\Rw \colon \mathfrak{S}_n \to \mathfrak{S}_n$ with respect to $w$.
\begin{definition} 
	Fix $w\in \frak{S}_n$.
	The \defi{retraction} $\Rw \colon \mathfrak{S}_n \to \mathfrak{S}_n$ is defined as follows: for $v\in \mathfrak{S}_n$, we inductively choose 
	\begin{align*}
	i_1 &:= \min\{i \in [n] \mid v(i) \leq w(1) = w^{(1)}\},\\
	i_2 &:= \min\{i \in [n]\setminus \{i_1\} \mid \{ v(i_1), v(i)\}\!\uparrow \leq w^{(2)}\},\\
	\vdots\\
	i_n &:= \min\{i \in [n]\setminus \{i_1,\dots,i_{n-1}\} \mid \{v(i_1),\dots,v(i_{n-1}),v(i)\}\!\uparrow \leq w^{(n)} \},
	\end{align*}
	and define  
	$$\Rw(v) :=v(i_1)v(i_2)\cdots v(i_n)\in \mathfrak{S}_n.$$
	Here, $\{a_1,\dots,a_d\}\!\uparrow$ denotes the ordered $d$-tuple obtained from $\{a_1,\dots,a_d\}$ by arranging its elements in ascending order and $w^{(d)}=\{w(1),\dots,w(d)\}\!\uparrow$.
\end{definition}

In the following, we fix $w \in \mathfrak{S}_n$ and denote
\[
v' := \Rw(v) 
\]
		to simplify the notation.
The retraction $\Rw$  will play an important role in our argument.  As is well-known, the Bruhat order $v \leq w$ on the symmetric group $\mathfrak{S}_n$ satisfies that
\begin{equation}\label{equation_Bruhat_order_equivalet}
v \leq w \iff v^{(d)} \leq w^{(d)}~~ \text{ for all } 1 \leq d \leq n-1
\end{equation}
(see \cite[(3.2.5)]{BL20Singular}), so the following lemma immediately follows from the definition of the operation.

\begin{example}
	Let $w = 3412$ and $v = 4123$. Then $i_1,\dots,i_4$ are given as follows:
	\begin{align*}
	i_1 &= \min\{i \in [4] \mid v(i) \leq w(1)=3\} = 2, \\
	i_2 &= \min\{i \in [4] \setminus \{2\} \mid \{1, v(i)\}\uparrow \leq (3,4) \} = 1, \\
	i_3 &= \min\{i \in [4] \setminus \{1,2\}\mid
	\{1,4,v(i)\} \uparrow \leq (1,3,4) \} = 3, \\
	i_4 &= 4.
	\end{align*}
	Hence $v' = \Rw(v) = v(2)v(1)v(3)v(4) = 1423$. 
\end{example}
\begin{lemma} \label{lemm:property} 
	The retraction $v' = \Rw(v)$ for $v\in \mathfrak{S}_n$ has the following properties:
	\begin{enumerate}
		\item $v'\le w$ for any $v\in \mathfrak{S}_n$,
		\item $v'=v$ if $v\le w$,
		\item $w_0'=w$ where $w_0$ is the longest element of $\mathfrak{S}_n$ as before. 
	\end{enumerate}
\end{lemma}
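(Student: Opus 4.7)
The plan is to extract all three statements directly from the greedy definition of the operation $v\mapsto v'$ together with the Bruhat order characterization~\eqref{equation_Bruhat_order_equivalet}. Before anything else I would verify that the operation is well-defined at each step: assuming inductively that $\{v(i_1),\dots,v(i_{d-1})\}\!\uparrow \le w^{(d-1)}$, the set $\{w(1),\dots,w(d)\}\setminus\{v(i_1),\dots,v(i_{d-1})\}$ has cardinality at least one, and any element $j$ of it has the form $v(i)$ for some $i\notin\{i_1,\dots,i_{d-1}\}$ and yields $\{v(i_1),\dots,v(i_{d-1}),j\}\!\uparrow = w^{(d)}$, so the defining constraint is always satisfiable.

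With well-definedness in hand, part~(1) is immediate: by construction $(v')^{(d)}=\{v(i_1),\dots,v(i_d)\}\!\uparrow\le w^{(d)}$ for every $d$, and \eqref{equation_Bruhat_order_equivalet} then gives $v'\le w$. For part~(2), assuming $v\le w$, I would induct on $d$ to show $i_d=d$: the candidate $i=d$ is admissible because $\{v(1),\dots,v(d)\}\!\uparrow=v^{(d)}\le w^{(d)}$, and $d$ is already the smallest index in $[n]\setminus\{1,\dots,d-1\}$, so $i_d=d$ and hence $v'=v$.

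For part~(3), I would induct on $d$ to show simultaneously that $w_0(i_d)=w(d)$ and $\{w_0(i_1),\dots,w_0(i_d)\}=\{w(1),\dots,w(d)\}$ as sets. Since $w_0(i)=n+1-i$ is strictly decreasing in $i$, minimizing $i_d$ is equivalent to maximizing $w_0(i_d)$ among the admissible values. The value $w_0(i)=w(d)$ is itself admissible because the defining inequality becomes an equality. The key step is then to rule out any $m>w(d)$ with $m\notin\{w(1),\dots,w(d-1)\}$: writing $w^{(d-1)}=(a_1<\cdots<a_{d-1})$ and letting $k$ denote the position at which $w(d)$ would be inserted to form $w^{(d)}$, one checks that in $\{a_1,\dots,a_{d-1},m\}\!\uparrow$ either $m$ itself sits at position $k$ (so the $k$-th entry is $m>w(d)=w^{(d)}_k$) or some $a_j>w(d)$ sits at position $k$; in either case the componentwise inequality with $w^{(d)}$ fails.

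The only portion that requires any genuine work is this last case analysis in~(3); parts~(1) and~(2) are truly immediate from the definition, so the main (and quite mild) obstacle is to carry out the insertion-position comparison cleanly.
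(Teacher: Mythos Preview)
Your proposal is correct and matches the paper's approach: the paper simply asserts that the lemma ``immediately follows from the definition of the operation'' together with the Bruhat characterization~\eqref{equation_Bruhat_order_equivalet}, and you have spelled out exactly those details, including the well-definedness check and the maximization argument for~(3) that the paper leaves implicit. One small simplification for the case analysis in~(3): since replacing $w(d)$ by any larger $m$ in the multiset $\{a_1,\dots,a_{d-1},w(d)\}$ can only (weakly) increase each coordinate of the sorted tuple, $\{a_1,\dots,a_{d-1},m\}\!\uparrow\ge w^{(d)}$ componentwise with at least one strict inequality, so $\le w^{(d)}$ is impossible---this avoids splitting on the insertion position.
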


By Lemma~\ref{lemm:property}, the image of the retraction $\Rw$ is the Bruhat interval $[id,w] := \{u \in \mathfrak{S}_n \mid u \leq w\}$. 
Hence, the set $\mathfrak{S}_n$ retracts to its subset $[id,w]$ via $R_w$ indeed.
The following proposition is a key observation.

\begin{proposition}\label{prop_limit_point_of_Cv}
	Let $x$ be a point in $\Xw$ with $x_{\j} \neq 0$ for any $\j \in \Jw$ {\rm (}see \eqref{eq:\Jw} for $\Jw${\rm )}. Then for any $v\in \mathfrak{S}_n$ and any $\a\in \Int(\Cv)\cap \UZ$, we have 
	\[
	\lim_{t \to 0} \lambda^\a(t) \cdot x = v'B.
	\]
\end{proposition}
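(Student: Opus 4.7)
The plan is to compute the limit through the $T$-equivariant Pl\"ucker embedding $\psi$ and reduce it, on each projective factor $\C P^{\binom{n}{d}-1}$, to a combinatorial minimization. Under $\psi$ the torus element $\lambda^\a(t)$ scales the $\i$-th coordinate by $t^{|\i|_\a}$, where $|\i|_\a:=a_{i_1}+\cdots+a_{i_d}$. By \eqref{eq:\Iw}, $p_\i(x)=0$ for $\i\in \Iw$, whereas the hypothesis on $x$ forces $p_\i(x)\ne 0$ for every $\i\in \Jw$; in view of \eqref{equation_Bruhat_order_equivalet}, the non-vanishing Pl\"ucker indices at level $d$ are precisely $\{\i\in I_{d,n}:\i\le w^{(d)}\}$. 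Normalizing by the smallest exponent and letting $t\to 0$, the $d$-th factor of $\lim_{t\to 0}\psi(\lambda^\a(t)\cdot x)$ is supported on those $\i$ that attain $\min\{|\i|_\a:\i\le w^{(d)}\}$.

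The heart of the proof is the claim: for $\a\in \Int(\Cv)\cap \UZ$ the minimum above is attained \emph{uniquely} at $\i=(v')^{(d)}$. Since $\a\in \Int(\Cv)$ gives the strict chain $a_{v(1)}<\cdots<a_{v(n)}$, writing a feasible Pl\"ucker index as $\i=\{v(j):j\in T\}\!\uparrow$ for some $d$-subset $T\subseteq[n]$ reduces the minimization to finding $T$ whose sorted tuple is componentwise smallest subject to $\{v(j):j\in T\}\!\uparrow\le w^{(d)}$; the defining greedy procedure for $v'$ outputs exactly $T=\{i_1,\dots,i_d\}$.

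I would prove this componentwise minimality by induction on $d$. The base $d=1$ is immediate from the definition of $i_1$. For the inductive step, given a feasible $T=(t_1<\cdots<t_d)$, I would first note that $T\setminus\{t_d\}$ is still feasible at level $d-1$: indeed, $w^{(d)}$ is obtained from $w^{(d-1)}$ by inserting $w(d)$, from which an elementary comparison yields $w^{(d-1)}_k\ge w^{(d)}_k$ for all $k\le d-1$, so that $t_k\le w^{(d)}_k\le w^{(d-1)}_k$. Applying the inductive hypothesis to $T\setminus\{t_d\}$ gives $T_{d-1}:=\{i_1,\dots,i_{d-1}\}\le T\setminus\{t_d\}$ componentwise in sorted form. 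A direct comparison shows $t_d\notin T_{d-1}$, and feasibility of $T_{d-1}\cup\{t_d\}$ at level $d$ follows from the componentwise inequality $T_{d-1}\le T\setminus\{t_d\}$ combined with $t_d\le w^{(d)}_d$; by minimality in the definition of $i_d$, this forces $i_d\le t_d$. A short lemma---if sorted tuples $A\le B$ componentwise and $x\le y$, then sorted $A\cup\{x\}\le B\cup\{y\}$ componentwise---then yields $T_d=T_{d-1}\cup\{i_d\}\le T$ componentwise, with equality iff $T=T_d$ by strictness of the chain $a_{v(1)}<\cdots<a_{v(n)}$.

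Granted the claim, each $d$-th projective factor of $\lim_{t\to 0}\psi(\lambda^\a(t)\cdot x)$ collapses to a single nonzero Pl\"ucker coordinate at index $(v')^{(d)}$, which is precisely the $d$-th component of $\psi(v'B)$ since $v'B$ has nonzero Pl\"ucker coordinates exactly along the flag $((v')^{(1)},\dots,(v')^{(n-1)})$. Because $\psi$ is an embedding, $\lim_{t\to 0}\lambda^\a(t)\cdot x=v'B$. The main obstacle in this plan is the inductive comparison above: it must simultaneously track feasibility at both levels $d-1$ and $d$ and then extract the greedy minimality $i_d\le t_d$, so the careful choice of $T\setminus\{t_d\}$ (rather than an arbitrary feasible $(d-1)$-subset of $T$) is essential.
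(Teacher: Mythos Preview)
Your overall strategy matches the paper's: pass to Pl\"ucker coordinates, note that on the $d$-th projective factor the limit is supported on the indices $\i$ minimizing $|\i|_{\mathbf a}$ among those with $x_\i\ne 0$, and identify this minimizer with $(v')^{(d)}$. The paper is terse at this last step (it simply asserts ``the construction of $v'$ shows that the sum \dots\ is smallest''), so your attempt to supply an inductive argument is more than the paper offers.

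However, your induction has a genuine gap. You claim that for a feasible index-set $T=(t_1<\cdots<t_d)$ the set $T\setminus\{t_d\}$ is feasible at level $d-1$. This is false: feasibility constrains the \emph{values} $\{v(j):j\in T\}$, and removing the largest \emph{index} $t_d$ need not remove the largest \emph{value}. For instance, take $n=3$, $w=231$, $v=321$, $d=2$, $T=\{1,2\}$: then $\{v(1),v(2)\}\!\uparrow=(2,3)\le w^{(2)}=(2,3)$, yet $T\setminus\{2\}=\{1\}$ has $v(1)=3\nleq w^{(1)}=2$. Your justification ``$t_k\le w^{(d)}_k\le w^{(d-1)}_k$'' conflates the indices $t_k$ with the sorted values; the inequality $\{v(j):j\in T\}\!\uparrow\le w^{(d)}$ says nothing about $t_k$ itself. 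The same index/value confusion recurs when you argue feasibility of $T_{d-1}\cup\{t_d\}$ at level $d$ from a componentwise inequality on indices together with ``$t_d\le w^{(d)}_d$''. The natural repair is to remove instead the index $m\in T$ with $v(m)$ maximal; then $T\setminus\{m\}$ is indeed feasible at level $d-1$ (since its sorted values are the first $d-1$ sorted values of $T$, which are $\le w^{(d)}_k\le w^{(d-1)}_k$). But the remaining steps --- that $m\notin T_{d-1}$, that $T_{d-1}\cup\{m\}$ is feasible at level $d$, and hence $i_d\le m$ --- then require new arguments; for example with $w=4123$, $v=4321$, $d=2$, $T=\{1,4\}$ one finds $m=1\in T_{d-1}=\{1\}$, so the argument must be reorganized. (A minor separate point: your claim that the nonvanishing Pl\"ucker indices at level $d$ are \emph{precisely} $\{\i\le w^{(d)}\}$ is stronger than what the hypothesis $x_\j\ne 0$ for $\j\in J_w$ immediately gives, though this does not affect the conclusion since you minimize over the larger set anyway.)
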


\begin{proof}
	Suppose that $v\leq w$. Then, since $v^{(d)}\leq w^{(d)}$ for any $1\leq d\leq n-1$ by~\eqref{equation_Bruhat_order_equivalet}, we have $v^{(d)}\in \Jw$ and hence 
	\[
	x_{v^{(d)}} \neq 0 ~~\text{ for all } 1 \leq d \leq n-1.
	\]
	On the other hand, since $\a=[a_1,\dots,a_n]\in \Int(\Cv)\cap \UZ$, we have $a_{v(1)}<a_{v(2)}<\cdots<a_{v(n)}$. Therefore the sum $\sum_{k=1}^d a_{v(k)}$ is smallest among the sum of arbitrary $d$ elements in $a_1,\dots,a_n$. Then, the same argument as in Example~\ref{exam:w312} shows that 
	$$\lim_{t\to 0}\lambda^{\a}(t)\cdot x=vB.$$ 
	Here $v'=v$ by Lemma~\ref{lemm:property} because $v\leq w$. This proves the proposition when $v\leq w$. 
	
	Suppose that $v\nleq w$. Since $v'\le w$ by Lemma~\ref{lemm:property}, ${(v')}^{(d)}\in \Jw$ and hence 
	\[
	x_{(v')^{(d)}} \neq 0 ~~\text{ for all } 1 \leq d \leq n-1.
	\]
	By \eqref{eq:\Iw} we have $x_{\i}=0$ for $\i\in I_{d,n}$ with $\i\nleq w^{(d)}$, so the construction of $v'$ shows that the sum $\sum_{k=1}^d a_{v'(k)}$ is smallest among the sum $\sum_{k\in \i}a_k$ for $\i\in I_{d,n}$ with $x_{\i}\not=0$. 
	This implies the desired identity in the proposition as before.
\end{proof}

The Orbit-Cone correspondence and Proposition~\ref{prop_limit_point_of_Cv} imply the following.

\begin{corollary} \label{coro:maximal_cone}
	The maximal cone in the fan of $\Yw$ corresponding to a fixed point $uB$ $(u\le w)$ is of the form
	\begin{equation*}
	\Cbwu:=\bigcup_{v \in \mathfrak{S}_n \text{ s.t. } v' = u}\Cv.
	\end{equation*}
\end{corollary}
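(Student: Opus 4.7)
The plan is to apply the Orbit–Cone correspondence (CLS11, Prop.~3.2.2) to the toric variety $\Yw$, using Proposition~\ref{prop_limit_point_of_Cv} as the computational input. Fix a generic point $x \in \Xw$ with $x_{\j} \neq 0$ for every $\j \in \Jw$; such a point exists by the discussion preceding Example~\ref{example_generic_pt_312}, and by definition $\Yw = \overline{T \cdot x}$.

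First I would recall what the Orbit–Cone correspondence says in our setting. For the (not-necessarily-effective) $T$-action on $\Yw$, a one-parameter subgroup $\lambda^{\a}$ with $\a \in \R^n$ has a limit $\lim_{t\to 0}\lambda^{\a}(t)\cdot x$ in $\Yw$, and this limit is the distinguished $T$-fixed point $uB$ precisely when the image of $\a$ in the ambient quotient space $\R^n/F_w$ of Remark~\ref{rema:ambient_space} lies in the relative interior of the maximal cone of the fan associated with $uB$. Consequently, the preimage in $\R^n$ of that relative interior is exactly $\{\a\in\R^n : \lim_{t\to 0}\lambda^{\a}(t)\cdot x = uB\}$, and taking its closure yields the maximal cone $\Cbwu\subset\R^n$ (lifted from the quotient).

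Next I would assemble the description. The cones $\{\Cv\}_{v\in\mathfrak{S}_n}$ tile $\R^n$: their interiors are the open chambers $a_{v(1)}<\cdots<a_{v(n)}$, which are pairwise disjoint, and their union covers $\R^n$. By Proposition~\ref{prop_limit_point_of_Cv}, for every $v\in\mathfrak{S}_n$ and every lattice point $\a\in\Int(\Cv)\cap\UZ$ we have $\lim_{t\to 0}\lambda^{\a}(t)\cdot x = v' B$. Since such lattice points are dense in $\Int(\Cv)$ and the map $\a\mapsto \lim_{t\to 0}\lambda^{\a}(t)\cdot x$ is locally constant on each open chamber (as it only depends on which cone of the fan of $\Yw$ contains $\a$), every $\a\in\Int(\Cv)$ satisfies $\lim_{t\to 0}\lambda^{\a}(t)\cdot x = v' B$. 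Combining these facts with the characterization above, the relative interior of $\Cbwu$ contains $\bigcup_{v:v'=u}\Int(\Cv)$, and therefore
\[
\bigcup_{v\in\mathfrak{S}_n,\, v'=u}\Cv \;\subseteq\; \Cbwu.
\]

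Finally I would argue equality and the range of $u$. Lemma~\ref{lemm:property}(1)(2) says $\{v' : v\in\mathfrak{S}_n\} = \{u : u\le w\}$, which matches exactly the set of $T$-fixed points of $\Yw$. Thus as $u$ ranges over $\{u\le w\}$, the unions $\bigcup_{v'=u}\Cv$ partition $\R^n$ up to boundaries, and each must exhaust its containing maximal cone $\Cbwu$ (since the $\Cbwu$'s also partition $\R^n$ up to boundaries). This forces the containment to be equality, proving the corollary. The only subtle point is the descent via the Orbit–Cone correspondence when the action is non-effective, but this causes no trouble because lifting cones from $\R^n/F_w$ to $\R^n$ preserves set-theoretic unions, and our computation of limits is invariant under translation by $F_w$ by construction.
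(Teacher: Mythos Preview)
Your argument is correct and is exactly the approach the paper takes: the paper simply states that the corollary follows from the Orbit--Cone correspondence together with Proposition~\ref{prop_limit_point_of_Cv}, and what you have written is a careful unpacking of that one-line justification. Your use of Lemma~\ref{lemm:property}(1)(2) to verify that $\{v':v\in\mathfrak{S}_n\}=\{u:u\le w\}$ and the partition argument to upgrade containment to equality are the natural details, and the handling of the non-effective action via Remark~\ref{rema:ambient_space} is appropriate.
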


Using Proposition~\ref{prop_limit_point_of_Cv},  we can characterize generic points in $\Xw$ as follows.
\begin{proposition} \label{prop:genric_point}
	A point $x$ in $\Xw$ is generic in $\Xw$ if and only if $x_{\j} \neq 0$ for any $\j \in \Jw$.
	{\rm (}Therefore, there exists a generic point in $X_w$ as explained after~\eqref{eq:\Jw}.{\rm )}  
\end{proposition}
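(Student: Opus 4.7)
The plan is to prove the two implications of the equivalence separately, with both directions essentially reducing to Proposition~\ref{prop_limit_point_of_Cv} and the $T$-equivariance of the Pl\"ucker embedding~\eqref{eq_Plucker_embedding}.

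For the implication $(\Leftarrow)$, suppose $x_{\j}\neq 0$ for all $\j\in \Jw$. I must show that every $T$-fixed point $uB$ with $u\le w$ lies in the closure $\overline{Tx}$. Fix such a $u$. By Lemma~\ref{lemm:property}(2) we have $u'=u$, and then Proposition~\ref{prop_limit_point_of_Cv} applied with $v=u$ (together with the fact that $\Int(C(u))\cap \Z^n$ is nonempty, e.g.\ $\a=(u^{-1}(1),\dots,u^{-1}(n))$) yields
\[
\lim_{t\to 0}\lambda^{\a}(t)\cdot x = uB \in \overline{Tx}.
\]
Since this holds for every $u\le w$, the closure $\overline{Tx}$ contains all $T$-fixed points of $\Xw$, so $x$ is generic.

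For the implication $(\Rightarrow)$, suppose $x$ is generic and let $\j\in \Jw$. By definition of $\Jw$, we may write $\j=v^{(d)}$ for some $v\le w$, and then the Pl\"ucker coordinate at $vB$ satisfies $p_{\j}(vB)=\pm 1\neq 0$. Because $T$ acts on the coordinate $p_{\j}$ by a single character, the vanishing locus $Z_{\j}:=\{y\in G/B\mid p_{\j}(y)=0\}$ is a $T$-invariant closed subset of $G/B$. If $p_{\j}(x)=0$ then $x\in Z_{\j}$ and hence $\overline{Tx}\subseteq Z_{\j}$; but by genericity of $x$ we have $vB\in \overline{Tx}$, contradicting $p_{\j}(vB)\neq 0$. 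Therefore $x_{\j}=p_{\j}(x)\neq 0$, which proves the claim.

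There is no real obstacle here once Proposition~\ref{prop_limit_point_of_Cv} is in hand: the $(\Leftarrow)$ direction is an immediate application with $v=u$, while the $(\Rightarrow)$ direction is a standard use of the fact that vanishing of a weight coordinate is preserved under taking closures of torus orbits. The only point requiring mild care is to notice that for each $u\le w$ the cone $C(u)$ has nonempty integral interior so that Proposition~\ref{prop_limit_point_of_Cv} is applicable, but this is clear from the defining inequalities of $C(u)$.
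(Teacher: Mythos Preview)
Your proof is correct and follows essentially the same approach as the paper: the $(\Leftarrow)$ direction is an immediate application of Proposition~\ref{prop_limit_point_of_Cv} together with Lemma~\ref{lemm:property}(2), and the $(\Rightarrow)$ direction uses that the vanishing of a Pl\"ucker coordinate is preserved under taking torus orbit closures. The paper phrases the latter via limit points rather than the $T$-invariant closed set $Z_{\j}$, but the content is identical.
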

\begin{proof}
	Suppose that $x_{\j}\not=0$ for any $\j\in \Jw$.  Then 
	Proposition~\ref{prop_limit_point_of_Cv} shows that the orbit closure $\overline{T\cdot x}$ contains the fixed point $v'B$ for any $v$ but $v'=v$ if $v\le w$ by Lemma~\ref{lemm:property} (2). Hence $\overline{T\cdot x}$ contains all the $T$-fixed points in $\Xw$, which means that $x$ is generic in $\Xw$, proving the \lq\lq if\rq\rq part in the proposition.
	
	Suppose that $x$ is a generic point in $\Xw$ but $x_{\j}=0$ for some $\j\in \Jw$.  Then the $\j$-th Pl\"{u}cker coordinate of the limit point $\lim_{t\to 0}\lambda^{\a}(t)\cdot x$ vanishes for any $\a\in \UZ$, which means that the $\j$-th coordinate of any $T$-fixed point in $\overline{T\cdot x}$ vanishes.  On the other hand, there is some $v\le w$ with $v^{(d)}=\j$ by definition of $\Jw$ where $d$ is the length of $\j$.  Since $v^{(k)}$-th Pl\"{u}cker coordinate of the $T$-fixed point $vB$ does not vanish for any $1\le k\le n$, this shows that $vB\notin \overline{T\cdot x}$ although $v\le w$.  This contradicts $x$ being generic in $\Xw$.  Therefore $x_{\j}\not=0$ for any $\j\in \Jw$, proving the \lq\lq only if \rq\rq part in the proposition. 
\end{proof}

\section{Right weak order}
\label{sec_right_weak_order}

In this section, for a fixed $w \in \mathfrak{S}_n$, we describe the set $\{v \in \mathfrak{S}_n \mid v' = u\}$ in Corollary~\ref{coro:maximal_cone} using the right weak (Bruhat) order on {$\mathfrak{S}_n$} (see \cite[\S 3.1]{BB05Combinatorics}). For $u_1, u_2 \in {\mathfrak{S}_n}$, $u_1 \leq_R u_2$ means that $u_2 = u_1 s_{i_1} \cdots s_{i_k}$ for some simple reflections $s_{i_1},\dots, s_{i_k}$ such that $\ell(u_1 s_{i_1} \dots s_{i_j}) = \ell(u_1) + j$ for $0 \leq j \leq k$, 
where $\ell(v)$ denotes the length of $v\in \mathfrak{S}_n$.
This order is called the \defi{right weak order}. 
The right weak order interval $[u_1,u_2]_R$ is defined to be the set $[u_1,u_2]_R := \{x \in {\mathfrak{S}_n} \mid u_1 \leq_R x \leq_R u_2\}$. 
\begin{lemma} \label{lemm:vprime}
	$v\ge_R v'$ for any $v\in\frak{S}_n$.
\end{lemma}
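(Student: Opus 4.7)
The plan is to establish $v \geq_R v'$ via the standard characterization of the right weak order by value-inversion inclusion \cite[\S 3.1]{BB05Combinatorics}: writing
\[
\Inv(u) := \{(a,b) \mid 1 \leq a < b \leq n,\ u^{-1}(a) > u^{-1}(b)\},
\]
one has $u_1 \leq_R u_2$ if and only if $\Inv(u_1) \subseteq \Inv(u_2)$. (A length-increasing right multiplication $u \mapsto u s_i$ swaps the values at positions $i, i+1$ of $u$ in one-line notation, adjoining exactly the one pair being swapped to $\Inv(u)$.) Hence the lemma reduces to showing $\Inv(v') \subseteq \Inv(v)$.

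Fix $(a,b) \in \Inv(v')$. Writing $a = v(i_p)$ and $b = v(i_q)$ in the notation of the construction of $v'$, one has $(v')^{-1}(a) = p$ and $(v')^{-1}(b) = q$, so the assumption $(v')^{-1}(a) > (v')^{-1}(b)$ becomes $p > q$. The target $v^{-1}(a) > v^{-1}(b)$ is just $i_p > i_q$. Suppose for contradiction $i_p < i_q$. At step $q$ of the inductive construction of $v'$, the index $i_p$ is still available (it will be used later, at step $p > q$), yet the larger $i_q$ was chosen. By the minimality clause in the definition of $i_q$, this can only happen if $i_p$ fails the admissibility test at step $q$, namely
\[
\{v(i_1),\dots,v(i_{q-1}),a\}\!\uparrow \not\leq w^{(q)}
\]
componentwise.

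The key observation I would then invoke is the monotonicity of sorted insertion: for a fixed $(q-1)$-element set $S$ and $x \notin S$, the $k$-th smallest element of $S \cup \{x\}$ is a non-decreasing function of $x$ (one verifies this by cases according to where $x$ is inserted). Applied to $S = \{v(i_1),\dots,v(i_{q-1})\}$ and the inputs $a < b$, this gives
\[
\{v(i_1),\dots,v(i_{q-1}),a\}\!\uparrow \leq \{v(i_1),\dots,v(i_{q-1}),b\}\!\uparrow
\]
componentwise. Since $b = v(i_q)$ was admitted at step $q$, the right-hand side is $\leq w^{(q)}$; so is the left-hand side, contradicting the rejection of $a$. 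Thus $i_p > i_q$, which gives $(a,b) \in \Inv(v)$ and completes the proof. The only non-bookkeeping step is the monotonicity of sorted insertion, which is essentially a one-line observation but worth stating cleanly to avoid index shuffling.
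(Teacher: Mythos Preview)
Your proof is correct and takes a genuinely different route from the paper's. The paper proceeds by downward induction on the first position $d$ at which $v$ and $v'$ agree: it locates the index $j>d$ with $v(j)=v'(d)$, observes from the construction that $v(i)>v(j)$ for $d\le i<j$, and then sets $u=v s_{j-1}s_{j-2}\cdots s_d$ (so that $u$ agrees with $v'$ one step further, $u'=v'$, and $v\ge_R u$); the induction hypothesis then gives $u\ge_R u'=v'$. This produces an explicit saturated chain from $v'$ up to $v$.

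Your argument bypasses the induction entirely by invoking the value-inversion characterization $u_1\le_R u_2\iff \Inv(u_1)\subseteq\Inv(u_2)$ and reducing to the monotonicity of sorted insertion. This is cleaner and shorter for the purpose of the lemma; the only cost is quoting the characterization rather than being self-contained. The paper's version, on the other hand, makes the chain structure visible, which is in spirit closer to how the subsequent arguments (e.g.\ the inductive proofs that $C(v)\subset D_w(u)^\vee$) are run.
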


\begin{proof}
	There is $0\le d\le n$ such that $v(i)= v'(i)$ for $1\le i\le d$. 
	We shall prove the lemma by downward induction on $d$ starting from $n$. If $d=n$, then $v= v'$ and the lemma trivially holds in this case. 
	We assume that the lemma holds for $v$ with $v(i)=v'(i)$ for $1\le i\le d$ and prove the lemma for $v$ with $v(i)=v'(i)$ for $1\le i\le d-1$. We may assume $v(d)\not= v'(d)$. Define $j\in [n]$ by $v(j)=  v'(d)$. Since $v(i)=v'(i)$ for $1\le i\le d-1$, we have $d<j$ and it follows from the construction of $v'$ that 
	\begin{equation} \label{eq:4-1}
	\begin{split}
	&\{ v(1),\dots,v(d-1),v(i)\}\uparrow \nleq w^{(d)}\qquad \text{for $d\le i<j$},\\
	&\{ v(1),\dots,v(d-1),v(j)\}\uparrow \leq w^{(d)}.
	\end{split}
	\end{equation}
	In particular
	\begin{equation} \label{eq:4-2}
	\text{$v(i)>v(j)$ for $d\le i<j$.}
	\end{equation} 
	We consider $u\in \frak{S}_n$ defined by 
	\[
	u=v(1)\cdots v(d-1)v(j)v(d)\cdots v(j-1)v(j+1)\cdots v(n).
	\]
	Since $v(i)=v'(i)$ for $1\le i\le d-1$, it follows from \eqref{eq:4-1} and \eqref{eq:4-2} that we have 
	\[
	\begin{split}
	& u(i)=v'(i)\quad \text{for $1\le i\le d$},\\
	&u'=v',\\
	&v=us_ds_{d+1}\cdots s_{j-1}\quad\text{with $\ell(v)=\ell(u)+(j-d)$, so $v\ge_R u$}.
	\end{split}
	\] 
	Because of the first and second identities above, the induction assumption can be applied to $u$, so that $u\ge_R u'$. This together with the second and last identities above shows $v\ge_R v'$. 
\end{proof}

\begin{lemma}\label{lemm:retraction_interval}
	If $v' \le_R z\le_R v$, then $z'=v'$.  
\end{lemma}
\begin{proof}
	Since $z\le_R v$, we have $v=zs_{i_1}s_{i_2}\cdots s_{i_p}$ for $p=\ell(v)-\ell(z)$.  Therefore, if we set $z_q=zs_{i_1}s_{i_2}\cdots s_{i_q}$ for $1\le q\le p$, then $z_{q}=z_{q-1}s_{i_q}$ (where $z_0=z$) and 
	\[
	z\le_R z_1\le_R z_2\le_R\cdots \le_R z_{p-1}\le_R z_p=v.
	\]
	Therefore, it suffices to prove the claim when $v=zs_k$ for some $k$.  Then we have 
	\begin{equation} \label{eq:1}
\arraycolsep=1.4pt
	\begin{array}{rcccccccc}
	v&=v(1)&\cdots &v(k-1)&v(k)&v(k+1)&v(k+2)&\cdots& v(n),\\
	z&=v(1)&\cdots &v(k-1)&v(k+1)&v(k)&v(k+2)&\cdots& v(n).
	\end{array}
	\end{equation}
	Since $vs_k=z\le_R v$, we have $v(k)>v(k+1)$; so $z(k)=v(k+1)<v(k)=z(k+1)$, that is, $(z(k),z(k+1))$ is not an inversion in $z$.  On the other hand, since $v'\le_R z$, an inversion in $v'$ is also an inversion in $z$. 
	This means that
	\[
	\text{$(v(k+1)=z(k),v(k)=z(k+1))$ is not an inversion in $v'$,}
	\] 
	i.e., $v(k+1)$ appears ahead of $v(k)$ in the one-line notation of $v'$. This together with \eqref{eq:1} implies $v'=z'$. 
\end{proof}

\begin{proposition}\label{prop_set_of_limit_points}
	Let $u\le w$. Then there exists a {\rm (}unique{\rm )} $\uw\in\frak{S}_n$ such that 
	\[
	\{v\in \frak{S}_n\mid v'=u\}=[u,\uw]_R.
	\]
\end{proposition}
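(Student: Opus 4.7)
The plan is to show $S_u := \{v \in \mathfrak{S}_n \mid v' = u\}$ forms an interval $[u, u_w]_R$ of the right weak order. By Lemma~\ref{lemm:property}(2) we have $u' = u$, so $u \in S_u$, and by Lemma~\ref{lemm:vprime} every $v \in S_u$ satisfies $v \ge_R v' = u$. Thus $u$ is the minimum of $S_u$, which already yields uniqueness of $u_w$.

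To produce $u_w$, I would first re-encode membership in $S_u$ combinatorially. For $v \in \mathfrak{S}_n$ let $\pi_v \in \mathfrak{S}_n$ be defined by $\pi_v(k) := v^{-1}(u(k))$, so that $\pi_v(k)$ is the position of the value $u(k)$ in the one-line notation of $v$. Unwinding the greedy algorithm that defines $v'$, one sees that $v' = u$ if and only if for every inversion $(k,m)$ of $\pi_v$ (meaning $k<m$ and $\pi_v(k) > \pi_v(m)$) the tuple $\{u(1),\dots,u(k-1),u(m)\}\!\uparrow$ fails to be $\le w^{(k)}$. Writing $V$ for the set of such ``allowable'' pairs, this reads: $v \in S_u$ if and only if $\Inv(\pi_v) \subseteq V$.

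Next, a direct computation shows that a right weak order cover $v \lessdot_R vs_i$ (which requires $v(i) < v(i+1)$) translates into $\pi_{vs_i} = s_i \pi_v$: the values $i$ and $i+1$ are swapped in the one-line notation of $\pi_v$, which toggles the inversion status of exactly one pair, namely $\{\pi_v^{-1}(i),\pi_v^{-1}(i+1)\}$. I would then construct $u_w$ iteratively: starting from $u$, perform upward covers inside $S_u$ as long as possible, and set $u_w$ to be the endpoint. Finally, I would verify $S_u = [u,u_w]_R$ by induction on $\ell(v)-\ell(u)$, constructing saturated chains in right weak order that connect $u$, $v$, and $u_w$ while remaining inside $S_u$.

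The main obstacle is the convexity of $S_u$ --- that $v_1 \le_R v \le_R v_2$ with $v_1,v_2 \in S_u$ forces $v \in S_u$ --- together with the well-definedness of $u_w$ (independence of the iterative construction on the choices made). The key ingredient I expect to exploit is a diamond-completion property: whenever $v \in S_u$ admits two distinct upward covers $vs_i,vs_j \in S_u$, the diamond closes inside $S_u$, i.e., $vs_is_j = vs_js_i$ also lies in $S_u$. Combined with the fact that the right weak order on $\mathfrak{S}_n$ is a lattice, this yields the desired interval structure.
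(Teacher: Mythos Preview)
Your overall strategy---establish a local join-closure property for $S_u$ and combine it with the lattice structure of weak order---is exactly the paper's. Your re-encoding via $\pi_v = v^{-1}u$ and the characterisation $v\in S_u \Leftrightarrow \Inv(\pi_v)\subseteq V$ is correct and is a genuine addition not present in the paper. It in fact delivers the convexity you flag as an obstacle almost for free: since $S_u\subseteq\{v:v\ge_R u\}$ by Lemma~\ref{lemm:vprime}, and on that set the assignment $v\mapsto\pi_v$ carries right weak order to left weak order (a cover $v\lessdot_R vs_i$ with $v\ge_R u$ becomes $\pi_v\lessdot_L s_i\pi_v$, as the length identity $\ell(v)=\ell(u)+\ell(\pi_v)$ is preserved), the condition $\Inv(\pi_v)\subseteq V$ is manifestly downward-closed in left weak order, hence $S_u$ is convex in $[u,w_0]_R$.

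The gap is your diamond-completion step. You write ``$vs_is_j=vs_js_i$ also lies in $S_u$'', but this equation is false when $|i-j|=1$: there $s_i,s_j$ satisfy a braid relation, the interval $[v,\,vs_i\vee_R vs_j]_R$ is a hexagon, and you must show that \emph{all} of $vs_is_j$, $vs_js_i$, and $vs_is_js_i$ remain in $S_u$. This braid case is precisely the delicate half of the paper's proof (its Case~2), and it neither follows from the commuting case nor from the lattice property of weak order by itself. Your characterisation does not sidestep it either: closure of $\{\pi:\Inv(\pi)\subseteq V\}$ under left-weak joins amounts to showing that whenever two inversion sets lie in $V$, so does the inversion set of their join---a transitivity-type statement about $V$ that must be verified directly from the definition of $V$ in terms of $u$ and $w$, which is essentially the computation in the paper's Case~2. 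Until you treat the braid case, the existence of a unique maximum $u_w$ is not established.
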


\begin{proof}
	We denote the left hand side in the proposition by $S(u)$. 
By Lemma~\ref{lemm:retraction_interval}, we know that $S(u)$ is the union of certain right weak order intervals. To prove that $S(u)$ is a right weak order interval, it suffices to prove that if 
	\begin{enumerate}
		\item \text{$\ell(vs_p)=\ell(vs_q)=\ell(v)+1$ for some $ p<q$, and}, 
		\item \text{$v'=(vs_p)'=(vs_q)'=u$,}
	\end{enumerate}
	then there exists $\tilde{v}\in \frak{S}_n$ such that {$\{ vs_p,vs_q\}\subset [v,\tilde{v}]_R\subset S(u)$}. 
	We note that 
	\begin{equation} \label{eq:5-1}
	v(p)<v(p+1),\quad v(q)<v(q+1)
	\end{equation}
	by (1) above. We define $i,j\in [n]$ by 
	\begin{equation*} 
	v(p)=u(i),\quad v(q)=u(j).
	\end{equation*}
	Then it follows from (2) above that 
	\begin{equation} \label{eq:5-2}
	\begin{split}
	&\{u(1),\dots,u(i-1), v(p)\}\uparrow \leq w^{(i)},\quad \{u(1),\dots,u(i-1), v(p+1)\}\uparrow \nleq w^{(i)},\\
	&\{u(1),\dots,u(j-1), v(q)\}\uparrow \leq w^{(j)},\quad \{u(1),\dots,u(j-1), v(q+1)\}\uparrow \nleq w^{(j)}.
	\end{split}
	\end{equation}
	We consider two cases. 
	
	Case 1. The case where $q-p\ge 2$. In this case, $s_p$ and $s_q$ commute and we take $\tilde{v}=vs_ps_q$. Then $[v,\tilde{v}]_R=\{v, vs_p, vs_q, \tilde{v}=vs_ps_q\}$, and (2) above implies $\tilde{v}'=u$. Therefore $\{ vs_p,vs_q\}\subset [v,\tilde{v}]_R\subset S(u)$. 
	
	Case 2. The case where $q-p=1$, i.e., $q=p+1$. In this case, it follows from~\eqref{eq:5-1} that 
	\begin{equation} \label{eq:5-3}
	v(p)<v(p+1)<v(p+2).
	\end{equation}
	Note that $i<j$ since $v'=u$ and $u(i)=v(p)<v(p+1)=u(j)$. We take $\tilde{v}=vs_ps_{p+1}s_p$, i.e., 
	\begin{equation} \label{eq:5-4}
	\tilde{v}=v(1)\cdots v(p-1)v(p+2)v(p+1)v(p)v(p+3)\cdots v(n).
	\end{equation}
	{Then 
		\[ [v,\tilde{v}]_R=\{v,\ vs_p,\ vs_{p+1},\ vs_ps_{p+1},\ vs_{p+1}s_p,\ \tilde{v}=vs_ps_{p+1}s_p\}. \]}
	Since $s_ps_{p+1}s_p=s_{p+1}s_ps_{p+1}$ and $q=p+1$, we have $\tilde{v}\ge_R vs_p$ and $\tilde{v}\ge_R vs_q$ by~\eqref{eq:5-3}. By~\eqref{eq:5-2} and \eqref{eq:5-3} we have 
	\[
	\{u(1),\dots,u(i-1), v(p+1)\}\uparrow \nleq w^{(i)},\quad \{u(1),\dots,u(i-1), v(p+2)\}\uparrow \nleq w^{(i)}. 
	\]
	This together with the first inequality in~\eqref{eq:5-2}, \eqref{eq:5-4} and the assumption $v'=u$ show that $\tilde{v}'(i)=v(p)=u(i)$. Then, the conclusion $\tilde{v}'=u$ follows from the second line in~\eqref{eq:5-2} and the assumption $v'=u$. Since 
	\[
	\arraycolsep=1.4pt
	\begin{array}{rccccccccc}
	vs_ps_{p+1}&=v(1)&\cdots &v(p-1)&v(p+1)&v(p+2)&v(p)&v(p+3)&\cdots &v(n),\\
	vs_{p+1}s_p&=v(1)&\cdots &v(p-1)&v(p+2)&v(p)&v(p+1)&v(p+3)&\cdots &v(n),
	\end{array}
	\]
	the same observation for $\tilde{v}$ shows that $(vs_ps_{p+1})'=u=(vs_{p+1}s_p)'$. Therefore $\{ vs_p,vs_q\}\subset [v,\tilde{v}]_R\subset S(u)$ in this case, too. 
\end{proof}

\begin{example}\label{example_1342}
	Take $w=1342$. Then there are four elements $u\in \frak{S}_4$ such that $u\le w$ (see Figure~\ref{fig_Bruhat_order}) and one can check the following (see Figure~\ref{fig_weak_Bruhat_order}):

	\begin{table}[htb]
		\centering
		\begin{tabular}{|c||c|c|c|c|c|c|c|c|c|c|} \hline
			$u$ & 1342 & 1243 &1324 & 1234 \cr
			\hline $u_w$ & 4321& 4231 &3241 & 2341 \cr
			\hline
		\end{tabular} \vspace{1.25ex}
		\caption{$u_w$ when $w=1342$.}
		\label{table1}
	\end{table}
	\noindent
	\[
	\begin{split}
	[1342,4321]_R&=\{ 1342, 3142, 1432, 4132, 3412, 4312, 3421, 4321\},\\
	[1243,4231]_R&=\{ 1243, 2143, 1423, 4123, 2413, 4213, 2431, 4231\},\\
	[1324,3241]_R&=\{ 1324, 3124, 3214, 3241\},\\
	[1234,2341]_R&=\{ 1234, 2134, 2314, 2341\}.
	\end{split}
	\]
	\begin{figure}[H]
	\begin{subfigure}[t]{0.49\textwidth}
		\centering
		\begin{tikzpicture}
		\tikzstyle{every node}=[font=\footnotesize]
		\matrix [matrix of math nodes,column sep={0.48cm,between origins},
		row sep={1cm,between origins},
		nodes={circle, draw, inner sep = 0pt , minimum size=1.2mm}]
		{
			& & & & & \node[label = {above:{4321}}] (4321) {} ; & & & & & \\
			& & & 
			\node[label = {above left:4312}] (4312) {} ; & & 
			\node[label = {above left:4231}] (4231) {} ; & & 
			\node[label = {above right:3421}] (3421) {} ; & & & \\
			& \node[label = {above left:4132}] (4132) {} ; & & 
			\node[label = {left:4213}] (4213) {} ; & & 
			\node[label = {above:3412}] (3412) {} ; & & 
			\node[label = {[label distance = 0.1cm]0:2431}] (2431) {} ; & & 
			\node[label = {above right:3241}] (3241) {} ; & \\
			\node[label = {left:1432}] (1432) {} ; & & 
			\node[label = {left:4123}] (4123) {} ; & & 
			\node[label = {[label distance = 0.01cm]180:2413}] (2413) {} ; & & 
			\node[label = {[label distance = 0.01cm]0:3142}] (3142) {} ; & & 
			\node[label = {right:2341}] (2341) {} ; & & 
			\node[label = {right:3214}] (3214) {} ; \\
			& \node[label = {below left:1423}] (1423) {} ; & & 
			\node[label = {[label distance = 0.1cm]182:1342}, fill=black] (1342) {} ; & & 
			\node[label = {below:2143}] (2143) {} ; & & 
			\node[label = {right:3124}] (3124) {} ; & & 
			\node[label = {below right:2314}] (2314) {} ; & \\
			& & & \node[label = {below left:1243}, fill=black] (1243) {} ; & & 
			\node[label = {[label distance = 0.01cm]190:1324}, fill=black] (1324) {} ; & & 
			\node[label = {below right:2134}] (2134) {} ; & & & \\
			& & & & & \node[label = {below:1234}, fill=black] (1234) {} ; & & & & & \\
		};
		
		\draw (4321)--(4312)--(4132)--(1432)--(1423)--(1243)--(1234)--(2134)--(2314)--(2341)--(3241)--(3421)--(4321);
		\draw (4321)--(4231)--(4132);
		\draw (4231)--(3241);
		\draw (4231)--(2431);
		\draw (4231)--(4213);
		\draw (4312)--(4213)--(2413)--(2143)--(3142)--(3241);
		\draw (4312)--(3412)--(2413)--(1423)--(1324)--(1234);
		\draw (3421)--(3412)--(3214)--(3124)--(1324);
		\draw (3421)--(2431)--(2341)--(2143)--(2134);
		\draw (4132)--(4123)--(1423);
		\draw (4132)--(3142)--(3124)--(2134);
		\draw (4213)--(4123)--(2143)--(1243);
		\draw (4213)--(3214);
		\draw (3412)--(1432)--(1342)--(1243);
		\draw (2431)--(1432);
		\draw (2431)--(2413)--(2314);
		\draw (3142)--(1342)--(1324);
		\draw (4123)--(3124);
		\draw (2341)--(1342);
		\draw (2314)--(1324);
		\draw (3412)--(3142);
		\draw (3241)--(3214)--(2314);
		\end{tikzpicture}
		\caption{Bruhat order of $\mathfrak{S}_4$ and elements smaller than or equal to $1342$ are marked by $\bullet$.}
		\label{fig_Bruhat_order}
	\end{subfigure}~\hspace{1em}
	\begin{subfigure}[t]{0.49\textwidth}
		\centering
		\begin{tikzpicture}
		\tikzstyle{every node}=[font=\footnotesize]
		\matrix [matrix of math nodes,column sep={0.48cm,between origins},
		row sep={1cm,between origins},
		nodes={circle, draw, inner sep = 0pt , minimum size=1.2mm}]
		{
			& & & & & \node[label = {above:4321}, fill=red!40!white] (4321) {} ; & & & & & \\
			& & & \node[label = {above left:4312}, fill=red!40!white] (4312) {} ; & & 
			\node[label = {above left:4231}, fill=blue!40!white] (4231) {} ; & & 
			\node[label = {above right:3421}, fill=red!40!white] (3421) {} ; & & & \\
			& \node[label = {above left:4132}, fill=red!40!white] (4132) {} ; & & 
			\node[label = {left:4213}, fill=blue!40!white] (4213) {} ; & & 
			\node[label = {above:3412}, fill=red!40!white] (3412) {} ; & & 
			\node[label = {[label distance = 0.1cm]0:2431}, fill=blue!40!white] (2431) {} ; & & 
			\node[label = {above right:3241}, fill=green!50!white] (3241) {} ; & \\
			\node[label = {left:1432}, fill=red!40!white] (1432) {} ; & & 
			\node[label = {left:4123}, fill=blue!40!white] (4123) {} ; & & 
			\node[label = {[label distance = 0.01cm]180:2413}, fill=blue!40!white] (2413) {} ; & & 
			\node[label = {[label distance = 0.01cm]0:3142}, fill=red!40!white] (3142) {} ; & & 
			\node[label = {right:2341}, fill=purple!40!white] (2341) {} ; & & 
			\node[label = {right:3214}, fill=green!50!white] (3214) {} ; \\
			& \node[label = {below left:1423}, fill=blue!40!white] (1423) {} ; & & 
			\node[label = {[label distance = 0.1cm]182:1342}, fill=red!40!white] (1342) {} ; & & 
			\node[label = {below:2143}, fill=blue!40!white] (2143) {} ; & & 
			\node[label = {right:3124}, fill=green!50!white] (3124) {} ; & & 
			\node[label = {below right:2314}, fill=purple!40!white] (2314) {} ; & \\
			& & & \node[label = {below left:1243}, fill=blue!40!white] (1243) {} ; & & 
			\node[label = {[label distance = 0.01cm]190:1324}, fill=green!50!white] (1324) {} ; & & 
			\node[label = {below right:2134}, fill=purple!40!white] (2134) {} ; & & & \\
			& & & & & \node[label = {below:1234}, fill=purple!40!white] (1234) {} ; & & & & & \\
		};
		
		\draw[line width=1ex, red,nearly transparent] (4321)--(4312)--(3412)--(3421)--(4321);
		\draw[line width=1ex, red,nearly transparent] (4312)--(4132)--(1432)--(1342)--(3142)--(3412);
		
		\draw[line width=1ex, blue,nearly transparent] (4213)--(4231)--(2431)--(2413)--(4213);
		\draw[line width=1ex, blue,nearly transparent] (4213)--(4123)--(1423)--(1243)--(2143)--(2413);
		
		\draw[line width=1ex, green!50!black,nearly transparent] (3241)--(3214)--(3124)--(1324);
		\draw[line width=1ex, purple,nearly transparent] (2341)--(2314)--(2134)--(1234);
		
		\draw (4321)--(4312)--(4132)--(1432)--(1423)--(1243)--(1234)--(2134)--(2314)--(2341)--(3241)--(3421)--(4321);
		\draw (4321)--(4231)--(4213)--(4123)--(1423);
		\draw (4132)--(4123);
		\draw (4213)--(2413)--(2143)--(1243);
		\draw (4312)--(3412)--(3142)--(1342)--(1432);
		\draw (3421)--(3412);
		\draw (4231)--(2431)--(2413);
		\draw (2431)--(2341);
		\draw (3241)--(3214)--(3124)--(1324)--(1234);
		\draw (3142)--(3124);
		\draw (3214)--(2314);
		\draw (1342)--(1324);
		\draw (2143)--(2134);
		
		\end{tikzpicture}
		\caption{Right weak order of $\mathfrak{S}_4$ and right weak order intervals in Table~\ref{table1}.}
		\label{fig_weak_Bruhat_order}
	\end{subfigure}
	\caption{Bruhat order and right weak order of $\mathfrak{S}_4$.}
\end{figure}
\end{example}

	\begin{remark}
		One can observe from Example~\ref{example_1342} that for an element $v \in \mathfrak{S}_n$, its  retraction $v'$ is the point in $[id,w]$ which is closest to $v$ with respect to the metric $d(x,y) := \ell(x^{-1}y)$ defined on $\mathfrak{S}_n$ (see~\cite{LMP_metric} for further study). 
	\end{remark}
\section{Dual cone of $C_w(w)$} \label{sect:4}
{By Corollary~\ref{coro:maximal_cone} and Proposition~\ref{prop_set_of_limit_points}, the maximal cone in the fan of $\Yw$ corresponding to the fixed point $uB$ $(u\le w)$ is of the form
	\[
	\Cbwu = \bigcup_{v \in [u, \uw]_R} \Cv.
	\]
	As noted in Lemma~\ref{lemm:property} $w_0'=w$, so $u_w=w_0$ when $u=w$. 
	Hence the maximal cone corresponding to $wB$ is of the form} 
\[
\Cbw:= C_w(w) = \bigcup_{v \in [w, w_0]_R} \Cv.
\]
Our purpose of this section is to identify the dual of the maximal cone $\Cbw$ (Proposition~\ref{prop:dual}).

\begin{definition} \label{defi:E(w)}
	For $w\in \mathfrak{S}_n$, we define 
	\[
	\Refw:=\{ (w(i),w(j))\mid 1\le i<j\le n,\ \ell(w)-\ell(t_{w(i),w(j)}w)=1\}
	\]
	where $t_{a,b}$ denotes the transposition of $a$ and $b$ and $\ell(v)$ denotes the length of a permutation $v$ as before. 
	(Note. The condition $\ell(w)-\ell(t_{w(i),w(j)}w)=1$ above is equivalent to $w(i)>w(j)$ and $w(k)\notin [w(j),w(i)]$ for $i<\forall k<j$.)
\end{definition}

\begin{example}\label{example_Rw}
	\begin{enumerate}
		\item If $w=3152674$, then 
		\[
		\Refw=\{(3,1), (3,2), (5,2), (5,4), (6,4), (7,4)\}.
		\]
		\item If $w=3715264$, then 
		\[
		\Refw=\{ (3,1), (3,2), (7,1), (7,5), (7,6), (5,2), (5,4), (6,4)\}.
		\]
	\end{enumerate}
\end{example}

We define
\begin{equation} \label{eq:Dw}
\Dw:= \text{ the cone in $\V$ spanned by } \{e_b-e_a\mid (a,b)\in \Refw\}.
\end{equation}

Since the cone $\Dw$ is generated by vectors of the form $e_b - e_a$ as in~\eqref{eq:Dw}, its dual $\Dw^{\vee}$ consists of points $x = (x_1,\dots,x_n) \in \U$ satisfying that 
\[
\langle e_b - e_a, x\rangle = x_b - x_a \geq 0
\]
for all $(a,b) \in \Refw$. Moreover, the dual cone $\Dw^{\vee}$ can be described as follows:
\begin{proposition} \label{prop:dual}
	$\Dw^\vee=\Cbw (=\bigcup_{v \in [w, w_0]_R} C(v))$.
\end{proposition}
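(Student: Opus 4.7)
The plan is to unpack both sides of the claimed equality $\Dw^\vee = \Cbw$ as systems of linear inequalities on the coordinates of a vector $x = (x_1,\dots,x_n) \in \R^n$. By the definition \eqref{eq:Dw} of $\Dw$ we have $x \in \Dw^\vee$ iff $x_b \ge x_a$ for every $(a,b) \in \Refw$, and by the definition of $\Cv$ we have $x \in \Cv$ iff $x_i \le x_j$ whenever $i$ precedes $j$ in the one-line notation of $v$. The missing ingredient is the standard inversion-set characterization of the right weak order: writing $\Inv(v) := \{(a,b) \mid 1 \le a < b \le n,\ v^{-1}(a) > v^{-1}(b)\}$, one has $v \ge_R w$ iff $\Inv(w) \subseteq \Inv(v)$, which follows inductively from the fact that right-multiplying $v$ by a length-increasing simple reflection $s_i$ adjoins exactly the pair $(v(i),v(i+1))$ to $\Inv(v)$.

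For the inclusion $\Cbw \subseteq \Dw^\vee$, I would fix $v \in [w,w_0]_R$ and $(a,b) \in \Refw$. Writing $(a,b) = (w(i),w(j))$ with $i < j$ and $w(i) > w(j)$, we get $b < a$ and $w^{-1}(b) > w^{-1}(a)$, so $(b,a) \in \Inv(w) \subseteq \Inv(v)$; hence $a$ precedes $b$ in $v$, and for any $x \in \Cv$ this gives $x_a \le x_b$, which is exactly the defining inequality of $\Dw^\vee$ associated with $(a,b)$.

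For the reverse inclusion $\Dw^\vee \subseteq \Cbw$, I would first establish a chain lemma: for every $(b,a) \in \Inv(w)$ there exists a sequence $b = c_0 < c_1 < \cdots < c_k = a$ with $(c_{\ell+1},c_\ell) \in \Refw$ for each $\ell$. This I prove by induction on $a-b$: if no position strictly between $w^{-1}(a)$ and $w^{-1}(b)$ carries a value in the open interval $(b,a)$, then $(a,b) \in \Refw$ by the parenthetical remark in Definition~\ref{defi:E(w)} and the one-step chain suffices; otherwise pick any such intermediate value $c$, note that $(b,c)$ and $(c,a)$ are both in $\Inv(w)$ with strictly smaller gaps, and concatenate the chains supplied by induction. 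Now given $x \in \Dw^\vee$, the chain lemma upgrades the defining inequalities to $x_b \ge x_a$ for every $(b,a) \in \Inv(w)$. I then build a permutation $v$ by listing $1,2,\dots,n$ in order of increasing $x$-value, breaking ties by ascending $w^{-1}$ (among indices with the same $x$-coordinate, those appearing earlier in $w$ come first); then $x \in \Cv$ by construction. To see $v \ge_R w$, take $(c,d) \in \Inv(w)$, so $c < d$ and $d$ precedes $c$ in $w$; the upgraded inequality gives $x_c \ge x_d$, and either the strict inequality $x_c > x_d$ already forces $d$ before $c$ in $v$, or the tie-breaking rule does. Either way $(c,d) \in \Inv(v)$, hence $\Inv(w) \subseteq \Inv(v)$ and $v \in [w,w_0]_R$, placing $x$ in $\Cbw$.

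The main technical content lies in the chain lemma; once available, everything else reduces to the inversion-set description of the right weak order together with a straightforward tie-broken sorting construction. I expect the only delicate point to be verifying the inductive step of the chain lemma cleanly, in particular that the intermediate value $c$ strictly reduces the parameter $a-b$ on both halves so that the induction terminates.
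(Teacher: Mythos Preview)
Your proof is correct and takes a genuinely different route from the paper's. The paper splits the statement into two lemmas, both argued via the retraction operation $v \mapsto v'$ developed in Sections~3--4: for $\Cbw \subseteq \Dw^\vee$ it inducts on $\ell(v)-\ell(w)$ along a reduced expression $v = w s_{i_1}\cdots s_{i_k}$, checking case-by-case that each simple right-multiplication preserves the relative order of $a$ and $b$ in the one-line notation; for $\Dw^\vee \subseteq \Cbw$ it assumes $v' \neq w$, locates the first position where $v'$ and $w$ disagree, and from the construction of $v'$ extracts an explicit edge $(w(\ell),w(m)) \in \Refw$ on which $e_{w(m)}-e_{w(\ell)}$ is negative on $\Cv$. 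Your argument bypasses the $v'$ machinery entirely: the forward inclusion is a one-line consequence of the inversion-set characterization $v \ge_R w \Leftrightarrow \Inv(w) \subseteq \Inv(v)$, and the reverse inclusion combines a chain lemma (every inversion of $w$ telescopes into $\Refw$-edges) with a tie-broken sort that exhibits, for each $x \in \Dw^\vee$, an explicit $v \ge_R w$ with $x \in \Cv$. Your approach is more self-contained and cleaner for this proposition in isolation; the paper's approach instead ties the result into the limit-point operation used throughout, and its second lemma foreshadows the analysis of $\Cbwu$ for general $u \le w$ in Section~\ref{sect:6}. Incidentally, your chain lemma does appear in the paper---stated without proof at the start of Appendix~\ref{section_acyclicity_and_avoidance} and in a variant form in the proof of Lemma~\ref{lemm:6-2}---but the paper does not invoke it for Proposition~\ref{prop:dual}.
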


This proposition follows from the following two lemmas. {Remember that $u_w=w_0$ when $u=w$ and hence $v\in [w,w_0]_R$ if and only if $v'=w$ by Proposition~\ref{prop_set_of_limit_points}.} 

\begin{lemma}
	If $v\in [w,w_0]_R$, i.e., $v'=w$, then $\Cv$ is contained in $\Dw^\vee$.
\end{lemma}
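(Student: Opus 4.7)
The plan is to unfold both sides of the inclusion. Since $\Dw$ is spanned by the vectors $e_b - e_a$ with $(a,b) \in \Refw$, its dual cone is
\[
\Dw^\vee = \{ x \in \V : x_b \geq x_a \text{ for every } (a,b) \in \Refw\}.
\]
On the other hand, $x \in \Cv$ means $x_{v(1)} \leq \cdots \leq x_{v(n)}$, which translates to $x_a \leq x_b \iff v^{-1}(a) \leq v^{-1}(b)$. Hence the lemma reduces to the combinatorial claim: \emph{for every $(a,b)\in \Refw$, one has $v^{-1}(a) < v^{-1}(b)$.}

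To prove this claim, I would work directly from the algorithmic definition of $v'$. The hypothesis $v' = w$ produces indices $i_1,\dots,i_n$ with $v(i_k) = w(k)$ for every $k$. Writing $(a,b) = (w(i),w(j))$ with $i < j$ (as in the definition of $\Refw$), we get $v^{-1}(a) = i_i$ and $v^{-1}(b) = i_j$, so the desired inequality becomes $i_i < i_j$.

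Suppose for contradiction that $i_i > i_j$. Since $i < j$, $i_j \in [n]\setminus\{i_1,\dots,i_{i-1}\}$, so $i_j$ was an eligible candidate at step $i$ of the construction of $v'$. The minimality of $i_i$ then forces $i_j$ to have failed the defining inequality at that step, i.e.
\[
\{w(1),\dots,w(i-1),b\}\!\uparrow\ \nleq\ w^{(i)} = \{w(1),\dots,w(i-1),a\}\!\uparrow.
\]
The main technical point -- and the only real obstacle -- is to contradict this. Since $(a,b) \in \Refw$ forces $a > b$, inserting $b$ into the sorted $(i-1)$-tuple $(w(1),\dots,w(i-1))\!\uparrow$ produces a tuple entrywise dominated by the tuple obtained by inserting $a$; a brief case split on the relative insertion positions of $b$ and $a$ confirms this, contradicting the display. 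Therefore $i_i < i_j$, completing the proof. (Alternatively, one could bypass the case analysis by invoking Proposition~\ref{prop_set_of_limit_points} to deduce $v \geq_R w$, and then cite the standard characterization of the right weak order via inversion sets to obtain the same conclusion.)
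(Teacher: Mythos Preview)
Your proof is correct, and the technical point you flag is genuinely easy: since $a=w(i)$ and $b=w(j)$ with $j>i$, neither $a$ nor $b$ lies in $\{w(1),\dots,w(i-1)\}$, and the componentwise comparison of the two sorted insertions goes through exactly as you sketch.

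Your route differs from the paper's. The paper does not touch the algorithmic definition of $v'$ at all; instead it uses the hypothesis in the form $v\in[w,w_0]_R$, writes $v=ws_{i_1}\cdots s_{i_k}$ with lengths adding, and inducts on $k$. At the inductive step one has $u=ws_{i_1}\cdots s_{i_{k-1}}$ and $v=us_p$; by induction $a$ appears before $b$ in the one-line notation of $u$ for every $(a,b)\in\Refw$, and one checks that the simple transposition $s_p$ cannot swap such a pair (if it did, then $a=v(p+1)<v(p)=b$, contradicting $a>b$). Your argument is more self-contained: it bypasses both the induction and the chain decomposition, working directly from the minimality clause in the construction of $v'$. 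The paper's approach, on the other hand, is the template later reused (in Lemma~\ref{lemm:Cvcontained}) for the general maximal cone $\Cbwu$, where one no longer has $u_w=w_0$ and the direct algorithmic argument would be messier. Your parenthetical alternative---invoking $v\ge_R w$ and the inversion-set description of right weak order---is essentially the paper's argument packaged as a citation.
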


\begin{proof}
	Since $v\in [w,w_0]_R$, one can write
	\[
	v=ws_{i_1}\cdots s_{i_k}
	\]
	where $\ell(v)=\ell(w)+k$. We prove the lemma by induction on $k$. 
	If $k=0$, then $v=w$ and the lemma is obvious. Suppose that $k\ge 1$ and the lemma holds for $k-1$. We set 
	\[
	u=ws_{i_1}\cdots s_{i_{k-1}} \qquad\text{and}\qquad i_k=p.
	\]
	Then, since $v=us_p$, we have
	\begin{equation} \label{eq:vu}
	\begin{split}
	\arraycolsep=1.4pt
	&\begin{array}{rcccccccl}
	v&=v(1)&\cdots &v(p-1)&v(p)&v(p+1)&v(p+2)&\cdots& v(n),\\
	u&=v(1)&\cdots &v(p-1)&v(p+1)&v(p)&v(p+2)&\cdots &v(n),\quad\text{and}
	\end{array} \\
	&\ell(v)=\ell(u)+1.
	\end{split}
	\end{equation}
	Take any element $(a,b)$ of $\Refw$. By induction assumption we have 
	\[
	\langle e_b-e_a, x\rangle \ge 0\qquad (\forall x\in \Cu)
	\]
	where $\langle\ ,\ \rangle$ denotes the standard inner product on $\R^n$. The above inequality is equivalent to $a$ being ahead of $b$ in the one-line notation for $u$. What we have to prove is that $a$ is still ahead of $b$ in the one-line notation for $v$. We take two cases. 
	
	Case 1. The case where $\{v(p),v(p+1)\}\not=\{a,b\}$. In this case, it is easy to see from \eqref{eq:vu} that $a$ is still ahead of $b$ in the one-line notation for $v$ since so is for $u$. 
	
	Case 2. The case where $\{v(p),v(p+1)\}=\{a,b\}$. 
	Since $a$ is ahead of $b$ in the one-line notation for $u$, we have $a=v(p+1)$ and $b=v(p)$ by \eqref{eq:vu} in this case. We note that $a>b$ since $(a,b)\in \Refw$. This together with $v=us_p$ shows that $\ell(v)=\ell(u)-1$ but this contradicts the last identity in \eqref{eq:vu}. Therefore Case 2 does not occur. 
\end{proof}

\begin{lemma}
	If $v\notin [w,w_0]_R$, i.e., $v'\not=w$, then $\Cv$ is not contained in $\Dw^\vee$.
\end{lemma}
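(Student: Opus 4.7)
The plan is to argue by contrapositive: assuming $\Cv\subseteq \Dw^\vee$, I will deduce $v\ge_R w$, which, since $u_w=w_0$ when $u=w$, is the same as $v'=w$ by Proposition~\ref{prop_set_of_limit_points}. First I unwind the definition of $\Dw^\vee$: the inclusion $\Cv\subseteq \Dw^\vee$ holds exactly when $\langle e_b-e_a,x\rangle\ge 0$ for all $x\in \Cv$ and all $(a,b)\in\Refw$, and since $\Cv$ is full-dimensional this is equivalent to saying that, for every $(a,b)\in\Refw$, $a$ appears before $b$ in the one-line notation of $v$. Writing $\Inv_R(x):=\{(c,d)\mid c>d,\ x^{-1}(c)<x^{-1}(d)\}$ for the (value-)inversion set of $x\in\mathfrak{S}_n$, the assumption reads $\Refw\subseteq \Inv_R(v)$.

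Next I invoke the standard combinatorial characterization of the right weak order (cf.\ \cite[\S 3.1]{BB05Combinatorics}): $v\ge_R w$ iff $\Inv_R(w)\subseteq \Inv_R(v)$. Thus it suffices to prove that $\Refw\subseteq \Inv_R(v)$ already forces $\Inv_R(w)\subseteq \Inv_R(v)$; informally, that the ``cover'' inversions in $\Refw$ generate all of $\Inv_R(w)$ as far as containment in $\Inv_R(v)$ is concerned.

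For the generation step I argue by induction on the gap $a-b$. Suppose for contradiction that $(a,b)\in \Inv_R(w)\setminus \Inv_R(v)$ and pick such a pair with $a-b$ minimal. If $(a,b)\in \Refw$, this immediately contradicts $\Refw\subseteq \Inv_R(v)$. Otherwise the equivalent description of $\Refw$ in the parenthetical note after Definition~\ref{defi:E(w)} furnishes some $c$ with $b<c<a$ and $w^{-1}(a)<w^{-1}(c)<w^{-1}(b)$. Then $(a,c)$ and $(c,b)$ both lie in $\Inv_R(w)$ with strictly smaller gap than $a-b$, so by minimality they belong to $\Inv_R(v)$; chaining the resulting inequalities gives $v^{-1}(a)<v^{-1}(c)<v^{-1}(b)$, whence $(a,b)\in \Inv_R(v)$, contradicting the choice of $(a,b)$.

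The main obstacle is identifying a usable generating mechanism for $\Inv_R(w)$ in terms of $\Refw$. The note after Definition~\ref{defi:E(w)} is exactly what powers the induction: a non-cover inversion of $w$ admits an intermediate value $c$ that splits it, in $w$, into two strictly shorter inversions, and this is precisely the structural feature needed to close the minimality argument. Once this reduction is in hand, the rest is a direct unwinding of the duality $\Cv\leftrightarrow \Inv_R(v)$ and of the translation between $[w,w_0]_R$ and $\{v:v'=w\}$.
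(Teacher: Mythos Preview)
Your proof is correct and takes a genuinely different route from the paper's. The paper argues directly: starting from $v'\neq w$, it locates the first position $i$ where $v'$ and $w$ differ, then constructs explicitly an element $(w(\ell),w(m))\in\Refw$ on which $e_{w(m)}-e_{w(\ell)}$ is negative on $\Cv$, using the mechanics of the operation $v\mapsto v'$. Your approach is contrapositive and more structural: you recast $\Cv\subseteq\Dw^\vee$ as the inversion-set containment $\Refw\subseteq\Inv_R(v)$, then show by a clean minimal-gap induction that the ``cover'' inversions $\Refw$ already force $\Inv_R(w)\subseteq\Inv_R(v)$, and finally invoke the standard characterization of right weak order by (left) inversion sets (Bj\"orner--Brenti, Proposition~3.1.3) together with Proposition~\ref{prop_set_of_limit_points} to conclude $v'=w$.

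What each buys: the paper's argument is self-contained and never appeals to the inversion-set description of weak order; it also makes the obstructing edge of $\Dw$ explicit. Your argument is shorter, isolates the combinatorial core (that every inversion of $w$ decomposes into a chain of $\Refw$-inversions, exactly the content of the note after Definition~\ref{defi:E(w)}), and makes transparent why the two lemmas surrounding Proposition~\ref{prop:dual} are really the two halves of the equivalence $\Cv\subseteq\Dw^\vee\Leftrightarrow \Inv_R(w)\subseteq\Inv_R(v)$. One small presentational point: it is worth stating explicitly that your $\Inv_R$ coincides with the set $T_L$ of Bj\"orner--Brenti (via $t_{a,b}\leftrightarrow(a,b)$ with $a>b$), so that Proposition~3.1.3 applies in the form you need.
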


\begin{proof}
	Since $v'<w$ {by assumption and Lemma~\ref{lemm:property} (1)}, there exists $i\in [n]$ such that 
	\begin{equation} \label{eq:vprime}
	v'(j)=w(j)\quad (1\le j\le i-1),\qquad v'(i)<w(i).
	\end{equation}
	Define $q$ and $m$ by 
	\begin{equation} \label{eq:qm}
	v'(i)=v(q)=w(m).
	\end{equation}
	It follows from \eqref{eq:vprime} that 
	\begin{equation} \label{eq:im}
	i< m
	\end{equation}
	and from the construction of $v'$, \eqref{eq:vprime} and \eqref{eq:qm} that 
	\begin{equation} \label{eq:*}
	\text{any element in }\{v(1),\dots,v(q-1)\}\backslash\{w(1),\dots,w(i-1)\} >w(i).
	\end{equation}
	By~the second inequality in~\eqref{eq:vprime} and \eqref{eq:qm}, we have that $w(i) > w(m)$. 
	Since $i<m$ by \eqref{eq:im}, there exists $\ell$ such that 
	\begin{equation} \label{eq:well}
	i \leq \ell < m, \qquad w(i)\ge w(\ell)>w(m),\qquad \text{and}\qquad (w(\ell),w(m))\in \Refw. 
	\end{equation}
	Now we define $p$ by $v(p)=w(\ell)$. Then, since $i \le \ell$, we have 
	\begin{equation} \label{eq:vp}
	v(p)=w(\ell)\notin\{w(1),\dots,w(i-1)\}.
	\end{equation} 
	It follows from \eqref{eq:qm}, \eqref{eq:*}, \eqref{eq:well} and \eqref{eq:vp} that 
	\begin{equation*} \label{eq:qp}
	q<p \qquad\text{and}\qquad v(q)=w(m)<w(\ell)=v(p).
	\end{equation*}
	Since $q<p$, $e_{v(q)}-e_{v(p)}$ takes a negative value on $\Cv$ (through the inner product) but $(v(p),v(q))=(w(\ell),w(m))\in \Refw$ by \eqref{eq:well}. This shows that $\Cv$ is not contained in $\Dw^\vee$, proving the lemma. 
\end{proof}

The two lemmas above imply Proposition~\ref{prop:dual}.

\section{Simpliciality of $\Dw$ and graph $\Gw$} \label{sect:5}
Let $\Gw$ be the graph associated to $\Refw$, i.e., the vertices of $\Gw$ are the positive integers appearing in $\Refw$ (so the vertices of $\Gw$ are elements in $[n]$) and elements in $\Refw$ are the edges. 
We show that $\Yw$ is smooth at $wB$ if and only if $\Gw$ is a forest (Theorem~\ref{thm_Dw_simplicial_iff_Gw_forest}). 
\begin{example}
	Following Example~\ref{example_Rw}, we have graphs $\G_{3152674}$ and
	$\G_{3715264}$ as in Figures~\ref{graph_3152674} and \ref{graph_3715264}.
\end{example}
\begin{figure}[t]
	\centering 
	\begin{minipage}[b]{0.5\textwidth}
		\centering
		\begin{tikzpicture}[scale = 0.5]
		\draw[->, gray, very thin] (0,0) -- (7.5,0) node[very near end, sloped, below] {position};
		\draw[->, gray, very thin] (0,0) -- (0,7.5) node[very near end, sloped, above] {value};
		\begin{scope}[color=gray!50, thin]
		\foreach \xi in {1,2,3,4,5,6,7} {\draw (\xi,0) -- (\xi, 7);}%
		\foreach \yi in {1,2,3,4,5,6,7} {\draw (0,\yi) -- (7,\yi);}
		\end{scope}
		
		\node (3) [circle,draw, fill=white!20, inner sep = 0.25mm] at (1,3) {3};
		\node (1) [circle,draw, fill=white!20, inner sep = 0.25mm] at (2,1) {1};
		\node (5) [circle,draw, fill=white!20, inner sep = 0.25mm] at (3,5) {5};
		\node (2) [circle,draw, fill=white!20, inner sep = 0.25mm] at (4,2) {2};
		\node (6) [circle,draw, fill=white!20, inner sep = 0.25mm] at (5,6) {6};
		\node (7) [circle,draw, fill=white!20, inner sep = 0.25mm] at (6,7) {7};
		\node (4) [circle,draw, fill=white!20, inner sep = 0.25mm] at (7,4) {4};
		
		\draw[->-=.5] (3)--(1);
		\draw[->-=.5] (3)--(2);
		\draw[->-=.5] (5)--(2);
		\draw[->-=.5] (5)--(4);
		\draw[->-=.5] (6)--(4);
		\draw[->-=.5] (7)--(4);
		\end{tikzpicture}
		\caption{Graph $\Gamma_{3152674}$.}
		\label{graph_3152674}
	\end{minipage}%
	\begin{minipage}[b]{0.5\textwidth}
		\centering
		\begin{tikzpicture}[scale = 0.5]
		\draw[->, gray, very thin] (0,0) -- (7.5,0) node[very near end, sloped, below] {position};
		\draw[->, gray, very thin] (0,0) -- (0,7.5) node[very near end, sloped, above] {value};
		\begin{scope}[color=gray!50, thin]
		\foreach \xi in {1,...,7} {\draw (\xi,0) -- (\xi, 7);}%
		\foreach \yi in {1,...,7} {\draw (0,\yi) -- (7,\yi);}
		\end{scope}
		
		\node (3) [circle,draw, fill=white!20, inner sep = 0.25mm] at (1,3) {3};
		\node (7) [circle,draw, fill=white!20, inner sep = 0.25mm] at (2,7) {7};
		\node (1) [circle,draw, fill=white!20, inner sep = 0.25mm] at (3,1) {1};
		\node (5) [circle,draw, fill=white!20, inner sep = 0.25mm] at (4,5) {5};
		\node (2) [circle,draw, fill=white!20, inner sep = 0.25mm] at (5,2) {2};
		\node (6) [circle,draw, fill=white!20, inner sep = 0.25mm] at (6,6) {6};
		\node (4) [circle,draw, fill=white!20, inner sep = 0.25mm] at (7,4) {4};
		
		\draw[->-=.5] (3)--(1);
		\draw[->-=.5] (3)--(2);
		\draw[->-=.5] (5)--(2);
		\draw[->-=.5] (5)--(4);
		\draw[->-=.5] (6)--(4);
		\draw[->-=.5] (7)--(1);
		\draw[->-=.5] (7)--(5);
		\draw[->-=.5] (7)--(6);
		\end{tikzpicture}
		\caption{Graph $\Gamma_{3715264}$.}
		\label{graph_3715264}
	\end{minipage}
\end{figure}
\begin{lemma} \label{lemm:edge}
	If $(a,b)\in \Refw$, then $e_b-e_a$ is an edge vector of $\Dw$. 
\end{lemma}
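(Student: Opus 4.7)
The plan is to use Proposition~\ref{prop:dual}, which identifies $\Dw^\vee = \Cbw$, and to show that the inequality $\phi_b \geq \phi_a$ carves out a genuine facet of $\Cbw$. Concretely, I want to exhibit a point $\phi^* \in \R^n$ satisfying $\phi^*_a = \phi^*_b$ and $\phi^*_{b'} > \phi^*_{a'}$ strictly for every $(a',b') \in \Refw$ different from $(a,b)$. Such a $\phi^*$ forces $e_b - e_a$ to span an extreme ray of $\Dw$: if we had $e_b - e_a = \sum_{(a',b')\neq (a,b)} c_{(a',b')}(e_{b'}-e_{a'})$ with $c_{(a',b')} \geq 0$, then pairing both sides with $\phi^*$ would give $0 = \sum c_{(a',b')}(\phi^*_{b'} - \phi^*_{a'})$, forcing all $c_{(a',b')}=0$.

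To construct $\phi^*$, introduce the directed graph $H$ on the vertex set $[n]$ whose edges are $a' \to b'$ for each $(a',b') \in \Refw$. Since every edge strictly decreases the label, $H$ is a DAG. Contract the distinguished edge $a \to b$ into a single vertex, forming $H/(a,b)$. Provided $H/(a,b)$ is still a DAG, any topological order on it yields a function $\pi \colon [n] \to \R$ with $\pi(a) = \pi(b)$ and $\pi(a') < \pi(b')$ along every surviving edge, and we can simply take $\phi^* := \pi$.

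The main obstacle is to verify that this contraction introduces no cycle, equivalently, that the edge $a \to b$ is the unique directed path in $H$ from $a$ to $b$. Suppose on the contrary that $a = x_0 \to x_1 \to \cdots \to x_k = b$ is a path with $k \geq 2$. Then each intermediate $x_\ell$ satisfies $b < x_\ell < a$. Writing $p_\ell := w^{-1}(x_\ell)$, the fact that every arrow in $\Refw$ moves forward in the one-line notation of $w$ gives $i = p_0 < p_1 < \cdots < p_k = j$. But the hypothesis $(a,b) \in \Refw$ forbids any value in the open interval $(b,a)$ from appearing at a position strictly between $i$ and $j$ in $w$; this excludes $p_1 \in (i,j)$, while $p_1 = j$ would force $x_1 = b \notin (b,a)$. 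Hence $p_1 > j$, and then the strict chain $p_1 < p_2 < \cdots < p_k$ contradicts $p_k = j$. Consequently $H/(a,b)$ is a DAG, the required $\phi^*$ exists, and $e_b - e_a$ is an edge vector of $\Dw$.
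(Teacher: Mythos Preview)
Your proof is correct, and it takes a genuinely different route from the paper's argument. The paper argues directly: assuming $e_b-e_a=\sum_{i=1}^k c_i(e_{b_i}-e_{a_i})$ with $c_i>0$ and $k\ge 2$, it extracts a chain $(a_1,b_1),(a_2,b_2),\dots,(a_m,b_m)$ in $\Refw$ with $a_1=a$, $b_m=b$, $b_\ell=a_{\ell+1}$, by repeatedly chasing which term cancels a given basis vector. Any intermediate $b_1=a_2$ then sits strictly between $b$ and $a$ in value and strictly between $w^{-1}(a)$ and $w^{-1}(b)$ in position, contradicting $(a,b)\in\Refw$. You instead produce a separating functional $\phi^*$ by contracting the edge $a\to b$ in the DAG $H$ and taking a topological order; the heart of your argument is the same combinatorial fact (no directed $a\to b$ path of length $\ge 2$), but you use it to certify acyclicity after contraction rather than to contradict a hypothetical linear combination. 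Your approach is a bit more conceptual (facet/separation), the paper's is shorter and more hands-on; both are fine.

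Two small remarks. First, the invocation of Proposition~\ref{prop:dual} is purely motivational: your pairing argument with $\phi^*$ shows directly that $e_b-e_a$ is not in the cone generated by the other $e_{b'}-e_{a'}$, and since $\Dw$ is pointed (all generators lie in the open half-space $\{\sum_i i\,x_i<0\}$), this already forces $e_b-e_a$ to span an extreme ray. You do not need $\Dw^\vee=\Cbw$ anywhere. Second, your last paragraph is slightly redundant: once you have $i=p_0<p_1<\cdots<p_k=j$ and $b<x_1<a$, the position $p_1$ lies strictly between $i$ and $j$ with $w(p_1)=x_1\in(b,a)$, which already contradicts $(a,b)\in\Refw$; the further case analysis on $p_1=j$ versus $p_1>j$ is unnecessary.
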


\begin{proof}
	Suppose that 
	\begin{equation} \label{eq:ba}
	e_b-e_a=\sum_{i=1}^k c_i(e_{b_i}-e_{a_i}) \qquad \text{with $c_i>0$}
	\end{equation}
	where $(a,b)\not=(a_i,b_i)\in \Refw$ for $i=1,\dots,k$. Suppose $k\ge 2$ and we deduce a contradiction. 
	Since $(a_i,b_i)\in\Refw$, we have $a_i>b_i$. Therefore, it follows from \eqref{eq:ba} that $\displaystyle{a=\max_{1\le i\le k}\{a_i\}}$. We may assume $a_1=a$ if necessary by changing indices. Since $(a_1,b_1)\not=(a,b)$ and $a_1=a$, we have $b_1\not=b$. In order for \eqref{eq:ba} to hold, $e_{b_1}$ must be killed at the right hand side of \eqref{eq:ba}. This means that there exists some $i\not=1$ such that $a_i=b_1$. We may assume $a_2=b_1$ if necessary by changing indices. If $b_2\not=b$, then we repeat the same argument and may assume that $a_3=b_2$. We repeat this argument. Then we reach $m\ge 2$ such that $b_m=b$, i.e., we obtain the following sequence of pairs in $\Refw$: 
	\begin{equation} \label{eq:sequence}
	\begin{split}
	&(a_1,b_1),\ (a_2,b_2),\ \dots,\ (a_m,b_m)\\
	&\text{such that $a_1=a$, $b_m=b$, $b_i=a_{i+1}$ for $1\le i\le m-1$}.
	\end{split}
	\end{equation}
	Remember that since $(a_i,b_i)\in\Refw$, $a_i$ is ahead of $b_i$ in the one-line notation for~$w$. Since $m\ge 2$, this together with \eqref{eq:sequence} shows that some positive integer $x$ with $a>x>b$ (e.g. $x=b_1=a_2$) appears between $a$ and $b$ in the one-line notation for $w$. This contradicts $(a,b)$ being in $\Refw$. 
\end{proof}

\begin{corollary} \label{coro:simplicial}
	The dual cone $\Dw$ is simplicial if and only if $|\Refw|=\dim \Dw$.
\end{corollary}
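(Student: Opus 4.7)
My plan is to observe that simpliciality of a polyhedral cone is equivalent to the equality of its number of rays and its dimension, and then to show that the rays of $D_w$ are in bijection with $R_w$.

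First, by Lemma~\ref{lemm:edge}, for every $(a,b) \in R_w$ the vector $e_b - e_a$ spans an edge (ray) of $D_w$. Conversely, since $D_w$ is by definition spanned by $\{e_b - e_a \mid (a,b) \in R_w\}$, every ray of $D_w$ is generated by some $e_b - e_a$. Moreover, distinct pairs $(a,b) \neq (a',b')$ in $R_w$ produce distinct rays: the vectors $e_b - e_a$ and $e_{b'} - e_{a'}$ have the same Euclidean norm $\sqrt 2$ and opposite coordinate supports only when $(a,b)$ and $(a',b')$ coincide, so neither can be a positive scalar multiple of the other unless they are equal. Hence the set of rays of $D_w$ is in bijection with $R_w$, and in particular $D_w$ has exactly $|R_w|$ rays.

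Next, I invoke the standard fact that a polyhedral cone is simplicial precisely when its primitive ray generators are linearly independent, equivalently when the number of rays equals the dimension of the cone. Applying this to $D_w$, which has exactly $|R_w|$ rays spanning a subspace of dimension $\dim D_w$, gives simpliciality if and only if $|R_w| = \dim D_w$.

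The argument is essentially bookkeeping once Lemma~\ref{lemm:edge} is in hand; the only point that requires a moment's care is the injectivity of the assignment $(a,b) \mapsto \mathbb{R}_{\geq 0}(e_b - e_a)$, which I would dispatch by comparing the supports of the vectors as indicated above. No step looks like a genuine obstacle.
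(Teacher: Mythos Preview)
Your argument is correct and matches the paper's approach. The paper does not give a separate proof of this corollary; it is stated as an immediate consequence of Lemma~\ref{lemm:edge}, and your write-up simply makes explicit the routine reasoning (that the rays of $D_w$ are in bijection with $E_w$, and that simpliciality is equivalent to the number of rays equalling the dimension) that the paper leaves to the reader.
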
 

We note that unless the identity $|\Refw|=\dim \Dw$ is satisfied, $\Yw$ is singular by Proposition~\ref{prop:dual}.
The dimension of the cone $\Dw$ is same as the rank of the matrix $M_w$ whose row vectors are $e_{b}-e_a$ for $(a,b) \in \Gw$. Since the nullity of the matrix $M_w$ is the number of connected components of $\Gw$, by the rank theorem, we have that
\begin{equation}\label{eq_dim_Dw_and_rank}
\dim \Dw = |\text{{\rm vertices of} $\Gw$}|-|\text{{\rm connected components of} $\Gw$}|.
\end{equation}
\begin{example} \label{exam:2}
	\begin{enumerate}
		\item Take $w=3152674$ in Example~\ref{example_Rw}(1). The cone $\Dw$ is spanned by 
		the row vectors of the matrix
		\[
		M_{3152674} = \begin{bmatrix}
		1 & 0 & -1 & 0 & 0 & 0 & 0 \\
		0 & 1 & -1 & 0 & 0 & 0 & 0 \\
		0 & 1 & 0 & 0 & -1 & 0 & 0 \\
		0 & 0 & 0 & 1 & -1 & 0 & 0 \\
		0 & 0 & 0 & 1 & 0 & -1 &0 \\
		0 & 0 & 0 & 1 & 0 & 0 & -1
		\end{bmatrix}. 
		\]	
		Then the null space of $M_{3152674}$ is spanned by $(1,1,1,1,1,1,1) \in \R^7$, so 	the nullity of $M_{3152674}$ is $1$.
		Hence the dimension of the cone $\Dw$ is $7-1 = 6$. Indeed, the graph $\G_{3152674}$ is connected (see~Figure~\ref{graph_3152674}).
	Since $|\Refw|$ is also $6$, $\Dw$ is simplicial and hence the merged cone $\Cbw$ in the fan is also simplicial by Proposition~\ref{prop:dual}. The corresponding graph $\Gw$ is a path graph (see~Figure~\ref{graph_3152674}).
		\item Take $w=3715264$ in Example~\ref{example_Rw}(2). Then $|\Refw|=8$ but $\dim \Dw=6$ since the graph~$\G_{3715264}$ is connected (see~Figure~\ref{graph_3715264}). Therefore $\Yw$ is singular. The corresponding graph $\Gw$ is connected and $b_1(\Gw)=2$ where $b_1$ denotes the first Betti number (see~Figure~\ref{graph_3715264}).
		\item Take $w=3412$. Then $|\Refw|=4$ but $\dim \Dw=3$. Therefore $\Yw$ is singular. The corresponding graph $\Gw$ is connected and $b_1(\Gw)=1$ (see~Figure~\ref{graph_3412}).
		\item Take $w=341265$. Then $|\Refw|=5$ but $\dim \Dw=4$. Therefore $\Yw$ is singular. The corresponding graph $\Gw$ has two connected components and $b_1(\Gw)=1$ (see~Figure~\ref{graph_341265}).
	\end{enumerate}
\end{example}
\begin{figure}
	\centering 
	\begin{minipage}[b]{0.5\textwidth}
		\centering
		\begin{tikzpicture}[scale = 0.5]
		
		\draw[->, gray, very thin] (0,0) -- (4.5,0) node[very near end, sloped, below] {position};
		\draw[->, gray, very thin] (0,0) -- (0,4.5) node[very near end, sloped, above] {value};
		\begin{scope}[color=gray!50, thin]
		\foreach \xi in {1,...,4} {\draw (\xi,0) -- (\xi, 4);}%
		\foreach \yi in {1,...,4} {\draw (0,\yi) -- (4,\yi);}
		\end{scope}
		
		\node (3) [circle,draw, fill=white!20, inner sep = 0.25mm] at (1,3) {3};
		\node (4) [circle,draw, fill=white!20, inner sep = 0.25mm] at (2,4) {4};
		\node (1) [circle,draw, fill=white!20, inner sep = 0.25mm] at (3,1) {1};
		\node (2) [circle,draw, fill=white!20, inner sep = 0.25mm] at (4,2) {2};
		
		\draw[->-=.5] (3)--(1);
		\draw[->-=.4] (3)--(2);
		\draw[->-=.4] (4)--(1);
		\draw[->-=.5] (4)--(2);
		
		\end{tikzpicture}
		\caption{Graph $\Gamma_{3412}$.}
		\label{graph_3412}
	\end{minipage}%
	\begin{minipage}[b]{0.5\textwidth}
		\centering
		\begin{tikzpicture}[scale = 0.5]
		\draw[->, gray, very thin] (0,0) -- (6.5,0) node[very near end, sloped, below] {position};
		\draw[->, gray, very thin] (0,0) -- (0,6.5) node[very near end, sloped, above] {value};
		\begin{scope}[color=gray!50, thin]
		\foreach \xi in {1,...,6} {\draw (\xi,0) -- (\xi, 6);}%
		\foreach \yi in {1,...,6} {\draw (0,\yi) -- (6,\yi);}
		\end{scope}
		
		\node (3) [circle,draw, fill=white!20, inner sep = 0.25mm] at (1,3) {3};
		\node (4) [circle,draw, fill=white!20, inner sep = 0.25mm] at (2,4) {4};
		\node (1) [circle,draw, fill=white!20, inner sep = 0.25mm] at (3,1) {1};
		\node (2) [circle,draw, fill=white!20, inner sep = 0.25mm] at (4,2) {2};
		\node (6) [circle,draw, fill=white!20, inner sep = 0.25mm] at (5,6) {6};
		\node (5) [circle,draw, fill=white!20, inner sep = 0.25mm] at (6,5) {5};
		
		\draw[->-=.5] (3)--(1);
		\draw[->-=.4] (3)--(2);
		\draw[->-=.4] (4)--(1);
		\draw[->-=.5] (4)--(2);
		\draw[->-=.5] (6)--(5);
		\end{tikzpicture}
		\caption{Graph $\Gamma_{341265}$.}
		\label{graph_341265}
	\end{minipage}
\end{figure}

Since $|\Refw|$ is nothing but the number of the edges in $\Gw$, the equality~\eqref{eq_dim_Dw_and_rank} proves the following. 

\begin{lemma} \label{lemm:R(w)\Dw}
	We have
	\[ \dim_\C \Yw=\dim \Dw=|\text{{\rm vertices of} $\Gw$}|-|\text{{\rm connected components of} $\Gw$}|.\]
	Moreover
	\[ |\Refw|-\dim \Dw=b_1(\Gamma_w)\ \ge 0,\]
	where $b_1(\Gw)$ denotes the first Betti number of $\Gw$, and hence $\Dw$ is simplicial if and only if $\Gw$ is a forest {\rm (}as an undirected graph{\rm )} by Corollary~\ref{coro:simplicial}. 
\end{lemma}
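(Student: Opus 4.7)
The plan is to prove the two equalities $\dim_\C \Yw = \dim \Dw$ and $\dim \Dw = |V(\Gw)| - |C(\Gw)|$ independently, and then combine them with the Euler identity $|E(\Gw)| = |V(\Gw)| - |C(\Gw)| + b_1(\Gw)$ together with Corollary~\ref{coro:simplicial} to read off the simpliciality criterion. Since the edges of $\Gw$ are by construction in bijection with elements of $\Refw$, one has $|\Refw|=|E(\Gw)|$, so the two equalities immediately give $|\Refw| - \dim \Dw = b_1(\Gw) \ge 0$, whence $\Dw$ is simplicial iff $|\Refw| = \dim \Dw$ iff $b_1(\Gw)=0$ iff $\Gw$ is a forest.

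For the first equality, the plan is to use standard toric geometry. By Remark~\ref{rema:ambient_space}, the fan of the normal toric variety $\Yw$ naturally lives in $\R^n/F_w$, so $\dim_\C \Yw = n - \dim F_w$. By Proposition~\ref{prop:dual}, the maximal cone of this fan corresponding to the fixed point $wB$ is $\Cbw = \Dw^\vee$. Being a top-dimensional cone of the fan, $\Cbw$ is full-dimensional in $\R^n/F_w$; duality of strongly convex cones then forces its dual $\Dw$ to be full-dimensional in $F_w^\perp$, giving $\dim \Dw = n - \dim F_w = \dim_\C \Yw$.

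For the second equality, I would observe that $\Dw$ is spanned by $\{e_b-e_a : (a,b) \in \Refw\}$, which are exactly the (signed) edge vectors of $\Gw$. The rank of this family is then computed by the edge-contraction procedure illustrated in Example~\ref{exam:2}(1): pick an edge $(a,b)$, substitute $e_a \mapsto e_b$ to reduce the ambient dimension by one, and discard the vector $e_b-e_a$ (which becomes $0$). Each contraction of an edge belonging to a spanning forest reduces the number of vertices by one while preserving the number of connected components; once a spanning forest has been exhausted, every remaining edge has both endpoints identified and so collapses to the zero vector. Thus the rank equals the number of edges of any spanning forest, namely $|V(\Gw)| - |C(\Gw)|$.

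The main technical obstacle is making the edge-contraction induction precise — one must verify that the substitution $e_a \mapsto e_b$ faithfully realizes graph contraction, so that contracting a forest edge strictly drops the rank by one while contracting a cycle-completing edge strictly preserves it (by eventually collapsing to $0$). This is essentially the classical rank-of-incidence-matrix lemma for graphs, which could be invoked by reference; the only genuine work is to spell out the induction on $|E(\Gw)|$ in our notation. Once both equalities are established, the rest of the statement is immediate bookkeeping.
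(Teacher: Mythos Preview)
Your proposal is correct and follows essentially the same approach as the paper, which gives no formal proof at all beyond the sentence ``the same argument as in Example~\ref{exam:2} proves the following.'' Your edge-contraction argument for $\dim \Dw = |V(\Gw)| - |C(\Gw)|$ is exactly the procedure of that example (equivalently, the standard rank computation for the incidence/edge space of a graph), and your toric-geometry justification of $\dim_\C \Yw = \dim \Dw$ via the lineality space of $\Cbw = \Dw^\vee$ being $F_w$ makes explicit what the paper leaves implicit. One small wording point: $\Cbw$ is not strongly convex in $\R^n$ itself, only after passing to $\R^n/F_w$; the cleanest phrasing is that the lineality space of $\Cbw = \Dw^\vee$ in $\R^n$ equals $(\mathrm{span}\,\Dw)^\perp$, and this lineality space is $F_w$, whence $\dim \Dw = n - \dim F_w = \dim_\C \Yw$.
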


Remember that our cone $\Dw$ has $\{e_b-e_a\mid (a,b)\in E(\Gw)\}$ as edge vectors (Lemma~\ref{lemm:edge}). They lie in the linear subspace, denoted by $H^{n-1}$, of $\R^n$ with the sum of the coordinates equal to zero. In general, we have the following. 

\begin{lemma}\label{lem_Dw_simplicial_iff_smooth}
	Let $\G$ be a directed graph with vertices in $\{1,\dots,n\}$ and let $D$ be the cone in $\R^n$ with $\{e_b-e_a\mid (a,b)\in E(\G)\}$ as edge vectors, where $E(\G)$ denotes the set of directed edges in $\Gamma$. Then the following are equivalent:
	\begin{enumerate}
		\item $D$ is simplicial.
		\item $\G$ is a forest {\rm (}as an undirected graph{\rm )}.
		\item $D$ is non-singular, i.e., the set $\{e_b-e_a\mid (a,b)\in E(\G)\}$ is a part of a $\Z$-basis of $H^{n-1}\cap \Z^n$.
	\end{enumerate}
	In particular, $\Dw$ is simplicial if and only if $\Dw$ is non-singular.
\end{lemma}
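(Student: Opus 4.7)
The plan is to prove the cycle (1)$\Leftrightarrow$(2) and (2)$\Leftrightarrow$(3); the final ``in particular'' statement then follows from (1)$\Leftrightarrow$(3) applied to $\G=\Gw$ and $D=\Dw$, using Lemma~\ref{lemm:edge}. Throughout, all generators $e_b-e_a$ lie in the lattice $H^{n-1}\cap\Z^n$, which has rank $n-1$.

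\textbf{Step 1: (1)$\Leftrightarrow$(2).} For the contrapositive of (2)$\Rightarrow$(1), suppose $\Gamma$ (as an undirected graph) contains a cycle $v_0,v_1,\dots,v_k=v_0$. Traversing this cycle produces a vanishing alternating sum $\sum_{i=0}^{k-1}\epsilon_i(e_{b_i}-e_{a_i})=0$, where $\epsilon_i=+1$ if the edge between $v_i$ and $v_{i+1}$ is directed $v_i\to v_{i+1}$ and $\epsilon_i=-1$ otherwise. This is a nontrivial linear dependence among the generators, so $D$ is not simplicial. Conversely, if $\Gamma$ is a forest, I induct on $|E(\Gamma)|$. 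The base case is trivial. In the induction step, a nonempty forest has a leaf $v$ (a degree-one vertex) incident to a unique edge, say with vector $\pm(e_v-e_u)$. In any linear relation, only this generator has a nonzero $v$-th coordinate, so its coefficient must vanish; delete this edge and apply the induction hypothesis.

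\textbf{Step 2: (3)$\Rightarrow$(1).} This is immediate since any subset of a $\Z$-basis is $\Z$-linearly independent, hence $\R$-linearly independent.

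\textbf{Step 3: (2)$\Rightarrow$(3).} The idea is to extend the forest $\G$ to a spanning tree of the complete graph on $[n]$ and invoke the standard fact that the edge vectors of a spanning tree form a $\Z$-basis of $H^{n-1}\cap\Z^n$. Explicitly, add directed edges to $\G$ (connecting distinct connected components, and connecting isolated vertices) to obtain a tree $T$ on $[n]$. Choose any vertex as the root and order the vertices $v_1,v_2,\dots,v_n$ in breadth-first (or depth-first) order, so every $v_i$ with $i\ge 2$ has a unique parent $v_{p(i)}$ with $p(i)<i$. The (possibly re-signed) tree-edge vectors $e_{v_i}-e_{v_{p(i)}}$ for $i=2,\dots,n$ form a unitriangular system with respect to the ordering $e_{v_n},e_{v_{n-1}},\dots,e_{v_2}$ (each vector has last nonzero coordinate $\pm 1$ at position $v_i$), so they constitute a $\Z$-basis of $H^{n-1}\cap\Z^n$. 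Since changing a generator $e_b-e_a$ to $e_a-e_b$ preserves the property of being part of a $\Z$-basis, the original edge vectors $\{e_b-e_a\mid(a,b)\in E(\G)\}\subset\{e_b-e_a\mid(a,b)\in E(T)\}$ form part of a $\Z$-basis of $H^{n-1}\cap\Z^n$.

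The only delicate bookkeeping is orientation: the cone $D$ uses the specific orientation given by $E(\G)$, while the spanning-tree argument naturally orients edges away from the root. Since flipping a generator $e_b-e_a\leftrightarrow e_a-e_b$ affects neither linear independence, nor the $\Z$-span, nor the property of extending to a $\Z$-basis, this causes no issue. I do not anticipate any substantive obstacle beyond this.
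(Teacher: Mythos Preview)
Your proof is correct and follows essentially the same approach as the paper. Both arguments handle (1)$\Leftrightarrow$(2) via the cycle/leaf dichotomy, note that (3)$\Rightarrow$(1) is trivial, and establish (2)$\Rightarrow$(3) by a breadth-first labeling of tree vertices that renders the edge vectors unitriangular; the only cosmetic difference is that the paper reduces to the case where $\Gamma$ is a single tree (treating components separately and relabeling vertices), whereas you extend $\Gamma$ to a spanning tree on all of $[n]$ before applying the same triangular argument.
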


\begin{proof}
	The equivalence of (1) and (2) can be seen by the same argument as above for $\Dw$. It is obvious that (3) implies (1), so it is enough to show that (2) implies (3). We may assume that $\G$ is a tree. To prove (3), we may change the order of coordinates of $\R^n$ and note that changing the order of the coordinates is nothing but relabeling the vertices of $\G$. 
	
	We shall relabel the vertices of $\G$.  For any two vertices $x$ and $y$ of $\G$, there is a unique path connecting them since $\G$ is a tree.  We define the distance $d(x,y)$ to be the number of edges in the path. Choose any vertex of $\G$ and label it as~$1$.  For any positive integer $d$, consider the set
	\[
	A_d:=\{ v\in V(\Gamma)\mid d(v,1)=d\}
	\]
	where $V(\Gamma)$ denotes the set of vertices of $\G$. 
	Then we label the vertices in $A_1$ as $2,3,\dots,|A_1|+1$ and then label the vertices in $A_2$ as $|A_1|+2,|A_1|+3,\dots, |A_1|+|A_2|+1$ and so on. We orient each edge $(a,b)$ of this relabeled graph in such a way that $a>b$ and denote the relabeled directed graph by $\bar{\G}$. Then it is not difficult to see that $\{e_b-e_a\mid (a,b)\in E(\bar{\G})\}$ is a $\Z$-basis of the free abelian group generated by $e_1-e_2,e_2-e_3,\dots,e_{m-1}-e_m$, where $m$ is the number of vertices of $\Gamma$, and hence a part of a $\Z$-basis of $H^{n-1}\cap \Z^n$, proving (3). 
\end{proof}

\begin{example}
	Take $w = 3152674$ in~Example~\ref{example_Rw}(1). Then the graph $\Gw$ is in Figure~\ref{graph_3152674}.
	We can reorder the vertices and change directions on edges as 
	follows.
	\begin{center}
		\begin{tikzpicture}[scale = 0.5]
		\begin{scope}[color=gray!50, thin]
		\foreach \xi in {0,1,2,3,4,5,6,7} {\draw (\xi,0) -- (\xi, 7);}%
		\foreach \yi in {0,1,2,3,4,5,6,7} {\draw (0,\yi) -- (7,\yi);}
		\end{scope}
		
		\node (3) [circle,draw, fill=white!20, inner sep = 0.25mm] at (1,3) {6};
		\node (1) [circle,draw, fill=white!20, inner sep = 0.25mm] at (2,1) {7};
		\node (5) [circle,draw, fill=white!20, inner sep = 0.25mm] at (3,5) {2};
		\node (2) [circle,draw, fill=white!20, inner sep = 0.25mm] at (4,2) {5};
		\node (6) [circle,draw, fill=white!20, inner sep = 0.25mm] at (5,6) {3};
		\node (7) [circle,draw, fill=white!20, inner sep = 0.25mm] at (6,7) {4};
		\node (4) [circle,draw, fill=white!20, inner sep = 0.25mm] at (7,4) {1};
		
		\draw[->-=.5] (1)--(3);
		\draw[->-=.5] (3)--(2);
		\draw[->-=.5] (2)--(5);
		\draw[->-=.5] (5)--(4);
		\draw[->-=.5] (6)--(4);
		\draw[->-=.5] (7)--(4);
		\end{tikzpicture}
	\end{center}
	Then the corresponding edge vectors are 
	\[
	\{e_1 - e_2, e_1 - e_3, e_1 - e_4, e_2- e_5, e_5 - e_6, e_6 - e_7\},
	\]
	which is a part of $\mathbb Z$-basis of $H^{6} \cap \mathbb{Z}^7$.
\end{example}

Combining Corollary~\ref{coro:simplicial}, Lemmas~\ref{lemm:R(w)\Dw} and~\ref{lem_Dw_simplicial_iff_smooth}, we obtain 
\begin{theorem}\label{thm_Dw_simplicial_iff_Gw_forest}
	The generic torus orbit closure $\Yw$ is smooth at the fixed point $wB$ if and only if $\Gw$ is a forest.
\end{theorem}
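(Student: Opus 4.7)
The plan is to realize the theorem as an immediate consequence of the structural results already assembled in this section, combined with the standard smoothness criterion for toric varieties. The first step is to translate the geometric hypothesis into cone language. Since $\Yw$ is a toric variety (being the closure of a torus orbit) and the fixed point $wB$ corresponds to the maximal cone $\Cbw = C_w(w)$ in the fan of $\Yw$, smoothness of $\Yw$ at $wB$ is equivalent to $\Cbw$ being a non-singular cone, equivalently, to its dual $\Cbw^\vee$ being non-singular in the lattice $H^{n-1}\cap \Z^n$. By Proposition~\ref{prop:dual}, $\Cbw^\vee = \Dw$, so the task reduces to showing that $\Dw$ is non-singular if and only if $\Gw$ is a forest.

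The second step is to collect the equivalences already proven. Lemma~\ref{lemm:edge} identifies $\{e_b - e_a \mid (a,b)\in \Refw\}$ as the set of edge vectors of $\Dw$, so $\Dw$ has exactly $|\Refw|$ edges. Corollary~\ref{coro:simplicial} then says $\Dw$ is simplicial iff $|\Refw| = \dim \Dw$, and Lemma~\ref{lemm:R(w)\Dw} computes $|\Refw| - \dim \Dw = b_1(\Gw)$. Hence $\Dw$ is simplicial iff $b_1(\Gw)=0$, i.e.\ iff $\Gw$ is a forest (as an undirected graph).

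The third and final step is to upgrade simpliciality to non-singularity. This is the content of Lemma~\ref{lem_Dw_simplicial_iff_smooth}, which applied to $\Gw$ states that for any cone whose edge vectors have the form $e_b - e_a$ indexed by a directed graph, simpliciality, forest-ness, and non-singularity in $H^{n-1}\cap \Z^n$ are all equivalent; the forward direction in that lemma is elementary, while the nontrivial implication is produced by the explicit tree-traversal relabeling of vertices in the proof of Lemma~\ref{lem_Dw_simplicial_iff_smooth}. Chaining these equivalences gives
\[
\Yw \text{ smooth at } wB \iff \Dw \text{ non-singular} \iff \Dw \text{ simplicial} \iff \Gw \text{ is a forest},
\]
which is the assertion of the theorem. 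There is no substantive obstacle remaining: all the combinatorial and lattice-theoretic work has been absorbed into the preceding lemmas, and the theorem is essentially a packaging statement. The only point worth being explicit about is the step from simpliciality to non-singularity, since in general these are distinct conditions on a rational polyhedral cone; here the special form of the edge vectors forces them to coincide.
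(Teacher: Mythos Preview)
Your proof is correct and follows essentially the same approach as the paper: the theorem is obtained by combining Proposition~\ref{prop:dual}, Corollary~\ref{coro:simplicial}, Lemma~\ref{lemm:R(w)\Dw}, and Lemma~\ref{lem_Dw_simplicial_iff_smooth}. The paper's own proof is a single sentence invoking the last three of these, while you make the role of Proposition~\ref{prop:dual} (translating smoothness at $wB$ into non-singularity of $\Dw$) explicit, which is a helpful clarification but not a different argument.
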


To our surprise, the graph $\Gw$ has been studied and the following is proven.

\begin{proposition}[\cite{bm-bu07}] \label{prop:avoidence}
	The graph $\Gw$ is a forest if and only if $w$ avoids the patterns $4231$ and $45\bar{3}12$.
\end{proposition}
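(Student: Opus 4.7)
I would prove both directions by translating between cycles in $\Gw$ and four-point patterns in $w$, exploiting that $(a,b)\in\Refw$ forbids values of $[b,a]$ at positions strictly between $p(a)$ and $p(b)$, where $p(\cdot)$ denotes position in the one-line notation of $w$.

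\textbf{Triangles and $4$-cycles.} First I verify $\Gw$ has no triangle: a triangle on $a<b<c$ requires $(c,b),(c,a),(b,a)\in\Refw$, hence $p(c)<p(b)<p(a)$; but then $b\in[a,c]$ sits strictly between $p(c)$ and $p(a)$, contradicting $(c,a)\in\Refw$. So every cycle has length $\ge 4$. Next I classify $4$-cycles on labels $a<b<c<d$, parametrized by the two edges missing from $K_4$. For the $K_{2,2}$-type $\{ac,ad,bc,bd\}$, the constraints force $p(c)<p(d)<p(a)<p(b)$, and $(c,a)\in\Refw$ forbids any value of $[a,c]$ between $p(c)$ and $p(a)$; the $4512$ pattern at those four positions therefore cannot be promoted to $45312$, giving an occurrence of $45\bar{3}12$. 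For the type $\{ab,bd,cd,ac\}$ the forced ordering $p(d)<p(b)<p(c)<p(a)$ is an occurrence of $4231$. For the type $\{ab,bc,cd,ad\}$ the forced $p(d)<p(c)<p(b)<p(a)$ would place $b,c\in[a,d]$ between $p(d)$ and $p(a)$, contradicting $(d,a)\in\Refw$, so this type does not occur.

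\textbf{Longer cycles and the converse.} No chord-reduction to $4$-cycles is available in general, as $\Gw$ can contain chordless cycles of length $\ge 5$ (for instance, $w=35142$ produces the chordless $5$-cycle $3{-}1{-}5{-}4{-}2{-}3$, and the four values $\{3,5,1,2\}$ at positions $(1,2,3,5)$ form a $45\bar{3}12$ occurrence). Instead, given any cycle $C$ in $\Gw$, I would extract a $4$-element subset of vertices of $C$ (or of $C$ augmented by a few ambient values) whose positions and values realize $4231$ or $45\bar{3}12$. A natural starting point is to pick the vertex $v$ of maximum value on $C$ with cycle-neighbors $u_1>u_2$; the constraints $(v,u_1),(v,u_2)\in\Refw$ force $p(v)<p(u_2)<p(u_1)$, and the pair $\{u_1,u_2\}$ is automatically not in $\Refw$ (the ordering of positions and values is incompatible). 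Walking around $C$ and tracking the induced positional constraints isolates a fourth value completing either a $4231$ pattern (when some value of $[u_2,u_1]$ lies between $p(u_2)$ and $p(u_1)$) or a $45\bar{3}12$ pattern (when no such value exists, as in the example above). For the converse, given an occurrence of $4231$ at positions $i_1<i_2<i_3<i_4$ with values $d,b,c,a$, I would construct a $4$-cycle of type $\{ab,bd,cd,ac\}$ in $\Gw$ by greedily replacing each of $d,b,c,a$ by a nearby value whenever an intermediate value interferes with one of the four target edges; the analogous construction applied to the $4512$ part of a $45\bar{3}12$ occurrence produces a $K_{2,2}$ in $\Gw$.

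\textbf{Main obstacle.} The delicate step is the extraction of a four-point pattern from a cycle of length $\ge 5$ in $\Gw$: although the extremal choice of $v,u_1,u_2$ identifies three values in the intended pattern, finding the fourth value and verifying that the resulting $4$-tuple realizes $4231$ or $45\bar{3}12$—rather than some other pattern—requires a careful case split on the relative order of positions and values around $C$. I expect this to be the bulk of the combinatorial work; the $4$-cycle classification above and the greedy substitution for the converse are comparatively mechanical once the structural result is in place.
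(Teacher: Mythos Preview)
Your forward analysis of $4$-cycles is correct, and the triangle exclusion matches the paper.  But the converse direction contains a genuine error that your own example exposes.  You assert that from a $45\bar{3}12$ occurrence one can ``greedily replace'' the four values to produce a $K_{2,2}$ in $\Gw$.  Take $w=35142$, which you yourself cite: its graph has edge set $\Refw=\{(3,1),(3,2),(5,1),(5,4),(4,2)\}$, and one checks directly that $\Gw$ contains \emph{no} $4$-cycle at all---the unique cycle is the chordless $5$-cycle $3\text{--}1\text{--}5\text{--}4\text{--}2\text{--}3$.  So no greedy adjustment of the four values $3,5,1,2$ can possibly yield a $K_{2,2}$, because none exists.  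The same obstruction applies to your $4231\Rightarrow 4$-cycle claim: a $4231$ occurrence need not be repairable to four actual edges of $\Gw$ forming a square.

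The paper's proof (Appendix~\ref{section_acyclicity_and_avoidance}) avoids this by never aiming for a $4$-cycle.  Instead it uses \emph{descending paths}: if $(a,b)\in\Inv(w)$ then there is a path in $\Gw$ from $a$ down to $b$.  From a $4231$ pattern $a>c>b>d$ one gets two descending paths $P$ (through $b$) and $Q$ (through $c$) from $a$ to $d$, and argues $c\notin P$, $b\notin Q$, so $P\cup Q$ contains a cycle of unspecified length.  The $45\bar{3}12$ case is similar with four descending paths.  For the other direction the paper does not start from the maximum-value vertex as you propose; it takes a cycle $S$ of \emph{minimal width} (the interval between the leftmost and rightmost positions of its vertices), distinguishes whether the extremal vertices $a,d$ are joined by an edge, and repeatedly uses width-minimality to rule out auxiliary cycles that would arise from alternative descending paths.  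Your max-value extremal choice may also lead somewhere, but the sketch gives no mechanism analogous to width-minimality for controlling the remaining cases, and that control is where the real work lies.
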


Here, a permutation $w$ \emph{avoids the pattern $4231$} if one cannot find indices $i<j<k<\ell$ such that $w(\ell)<w(j)<w(k)<w(i)$. Similarly $w$ \emph{avoids the pattern $45\bar{3}12$} if every occurrence of the pattern $4512$ is a subsequence of an occurrence of $45312$. 
In \ref{section_acyclicity_and_avoidance}, we will provide a proof different from the proof in~\cite{bm-bu07}.
Actually, in~\cite{bm-bu07}, they provide one more equivalent condition on the statement of Proposition~\ref{prop:avoidence}. And they used this condition to prove the equivalence in the previous proposition. But we provide a direct proof of Proposition~\ref{prop:avoidence}.

\begin{remark} 
	In \cite{bm-bu07}, they associate a graph $G_\pi$ to a permutation $\pi$ and prove that $G_\pi$ is a forest if and only if $\pi$ avoids the patterns $1324$ and $21\bar{3}54$.  In fact, our $\Gw$ is their $G_{w_0w}$, so we obtain the statement in Proposition~\ref{prop:avoidence}.
	%
\end{remark}

We restate Theorem~\ref{thm_Dw_simplicial_iff_Gw_forest} in terms of pattern
avoidance using Proposition~\ref{prop:avoidence}.

\begin{theorem}
	The generic torus orbit closure  $\Yw$ is smooth at the fixed point $w B$ if and only if $w$ avoids the patterns $4231$ and $45\bar{3}12$.
\end{theorem}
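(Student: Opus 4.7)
The plan is essentially trivial: the statement is a direct restatement obtained by chaining together the two preceding results, namely Theorem~\ref{thm_Dw_simplicial_iff_Gw_forest} and Proposition~\ref{prop:avoidence}. So the proposal is simply to compose these equivalences.

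First I would invoke Theorem~\ref{thm_Dw_simplicial_iff_Gw_forest}, which has already been established via the chain Corollary~\ref{coro:simplicial} $\to$ Lemma~\ref{lemm:R(w)\Dw} $\to$ Lemma~\ref{lem_Dw_simplicial_iff_smooth}, to rewrite the smoothness condition at $wB$ as the combinatorial condition that $\Gw$ is a forest. Second, I would apply Proposition~\ref{prop:avoidence}, which translates the forest condition on $\Gw$ into the pattern avoidance condition on $w$.

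Since both equivalences are biconditionals, composing them immediately yields the theorem. There is no real obstacle; the only mild subtlety is making sure the reader understands that nothing new is being claimed beyond a change of language, namely passing from graph-theoretic data attached to the edge set $\Refw$ to purely permutation-theoretic data captured by forbidden subword patterns. Thus the entire proof reduces to writing: by Theorem~\ref{thm_Dw_simplicial_iff_Gw_forest}, $\Yw$ is smooth at $wB$ if and only if $\Gw$ is a forest, which by Proposition~\ref{prop:avoidence} is equivalent to $w$ avoiding the patterns $4231$ and $45\bar{3}12$. \qed
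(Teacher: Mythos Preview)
Your proposal is correct and matches the paper exactly: the paper itself presents this theorem as a direct restatement of Theorem~\ref{thm_Dw_simplicial_iff_Gw_forest} in the language of pattern avoidance via Proposition~\ref{prop:avoidence}, and gives no separate proof.
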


\begin{remark}
	According to \cite{bm-bu07} and \cite{wo-yo06}, $\Gw$ is a forest if and only if our Schubert variety $\Xw$ $(=\overline{BwB/B})$ is factorial. 
	(Note. $X_w$ in \cite{wo-yo06} is different from our $\Xw$, indeed their $\Xw$ is $\Omega_w$ in Fulton's book \cite{Fulton97Young}, which is the closure of the \emph{opposite} Schubert cell  of codimension~$\ell(w)$.) 
\end{remark}

\section{Smoothness of $\Yw$ at other fixed points} \label{sect:6}

In the previous sections, we identified the cone dual to the maximal cone $\Cbw=\Cbw(w)$ and associated the graph $\Gw$.  In this section, we will do the same task to the other maximal cones $\Cbwu$ $(u\le w)$  generalizing the previous case, namely we identify the cone dual to $\Cbwu$ and associate a graph, denoted by $\Gwu$, to each $u\le w$. It seems that if the graph $\Gw$ is a forest, then so is $\Gwu$ for any $u\le w$.  This means that $\Yw$ is smooth if it is smooth at the fixed point $wB$.  We propose this and a slightly more general statement as a conjecture at the end of this section.

We begin with the generalization of $\Refw$ introduced in Definition~\ref{defi:E(w)}.

\begin{definition} \label{defi:tildeE_u(w)}
	For $u\le w$, we define
	\[
	\wideRefwu:=\{ (u(i),u(j))\mid \ 1\le i<j\le n,\ t_{u(i),u(j)}u\le w,\ {|\ell(u)-\ell(t_{u(i),u(j)}u)|=1}\}
	\] 
	where $t_{a,b}$ denotes the transposition of $a$ and $b$ {and $\ell(v)$ denotes the length of a permutation $v$ as before}.  
\end{definition}

\begin{remark} 
	\begin{enumerate}
			\item $\widetilde\Ref_w(w)=\Refw$ because the condition $\ell(w)-\ell(t_{w(i),w(j)}w)=1$ in Definition~\ref{defi:E(w)} is equivalent to $$t_{w(i),w(j)}w\le w \quad \text{and}\quad |\ell(w)-\ell(t_{w(i),w(j)}w)|=1.$$
			\item The condition $|\ell(u)-\ell(t_{u(i),u(j)}u)|=1$ in Definition~\ref{defi:tildeE_u(w)} is equivalent to 
			\[
			\begin{split}
			&\text{ $u(k)\notin [u(j),u(i)]$\quad  when $u(i)>u(j)$},\\
			&\text{ $u(k)\notin [u(i),u(j)]$\quad when $u(i)<u(j)$},
			\end{split}
			\]
			for $i<\forall k<j$.
			\item	The sets $\Refw$ and $\wideRefwu$ have been studied by Tsukerman and Williams in~\cite{ts-wi15}.  Using the notations $\overline{T}(u,[id,w]) $ and $ \underline{T}(u, [id,w])$ in~\cite{ts-wi15}, we have that 
			\begin{equation}\label{eq_relation_Ewu_T}
			\wideRefwu = \{(u(i), u(j)) \mid  (i,j) \in \overline{T}(u,[id,w]) \cup \underline{T}(u, [id,w])\}.
			\end{equation}
		Here, 
	\[
	\begin{split}
	&\overline{T}(u, [id,w]) := \{ (i,j) \mid 1 \leq i < j \leq n, \ t_{u(i),u(j)} u < u, \ |\ell(u)-\ell(t_{u(i),u(j)}u)|=1 \}, \\
	&\underline{T}(u, [id,w]) := \{ (i,j) \mid 1 \leq i < j \leq n, \ u < t_{u(i),u(j)} u \leq w, \ |\ell(u)-\ell(t_{u(i),u(j)}u)|=1 \}.
	\end{split}
\]
	\end{enumerate}
\end{remark}

\begin{example} \label{exam:tildeRwu}
	Take $w=3412$. 
	\begin{enumerate}
		\item If $u=2143$, then 
		$ \wideRefwu=\{ (1,4), (2,3), (2,1), (4,3)\}.$
		\item If $u=2413$, then 
		$ \wideRefwu=\{ (2,3), (2,1), (4,1), (4,3)\}. $
		\item If $u=1432$, then 
		$ \wideRefwu=\{ (1,3), (4,3), (3,2)\}.$
	\end{enumerate}
\end{example}

We define $(a,b)+(b,c)=(a,c)$. Then, in (1) in the example above, we have 
\[
(2,3)=(2,1)+(1,4)+(4,3).
\]
We say that an element of $\wideRefwu$ is \emph{decomposable} if it is sum of some other elements in $\wideRefwu$, and \emph{indecomposable} otherwise. 

\begin{definition}
	We define
	\[
	\Refwu:=\{\text{indecomposable elements in $\wideRefwu\}$}
	\]
	and 
	\[
	\Dwu:=\text{ the cone in $\V$ spanned by $\{ e_b-e_a\mid (a,b)\in\Refwu\}$}.
	\]
\end{definition}

\begin{remark} \label{rema:E_u(w)}
	\begin{enumerate}
		\item We have that
			\[
			\Dwu = \text{ the cone in $\V$ spanned by $\{e_b - e_a \mid (a,b) \in \wideRefwu \}$}
			\]
			since the relation $(a,b) = (a,c) + (c,b)$ implies that $e_a - e_b = (e_a - e_c) + (e_c - e_b)$.
		\item For $u \leq w$, Let $G^w_u$ be a directed graph whose nodes are $[n]$ and directed edges are given by
			\[
			\begin{split}
			(i,j) \in G^w_u \quad \text{ if } (i,j) \in \overline{T}(u, [id,w]), \\
			(j,i) \in G^w_u \quad \text{ if }(i,j) \in \underline{T}(u,[id,w]).
			\end{split}
			\]
			Then it is proved in~\cite[Theorem~4.19]{ts-wi15} that this graph is a directed acyclic graph.
			Hence it has a unique \textit{transitive reduction}, that is, there is a  unique subgraph of $G^w_u$ obtained by removing each directed edge $(i,j)$ if there is a directed path from $i$ to $j$ (see~\cite{AGU72_transitive} for more details). 
			Since the permutation $u \in \mathfrak{S}_n$ maps the set $\overline{T}(u, [id,w]) \cup \underline{T}(u,[id,w])$ bijectively onto $\wideRefwu$ (see~\eqref{eq_relation_Ewu_T}), an element $(a,b)$ in $\wideRefwu$ is decomposable if and only if the edge $(u^{-1}(a), u^{-1}(b))$ in $G^w_u$ disappears in its transitive reduction.
			Because of the uniqueness of the transitive reduction of a directed acyclic graph, 			
			the set $E_w(u)$ of indecomposable elements is well-defined. 
		\item $\Ref_w(w)=\widetilde\Ref_w(w)=\Refw$, so $D_w(w)=\Dw$,
		\item $\Refwou=\{(u(i),u(i+1))\mid i=1,2,\dots,n-1\}$.
		\item When $u=id$ (the identity permutation), $E_w(id)=\{(a,a+1)\mid t_{a,a+1}id\le w\}$.  \label{Ew_id}
	\end{enumerate}
\end{remark}

\begin{example} \label{exam:Rwu}
	For (1) in Example~\ref{exam:tildeRwu}, 
	$ \Refwu=\{ (1,4), (2,1), (4,3)\},$ 
	and for (2) and (3), $\Refwu=\wideRefwu$. One can check that $\Cbwu=\Dwu^\vee$ in these cases. 
\end{example}

The purpose of this section is to prove the following proposition.

\begin{proposition}\label{prop_dual_Cuw}
	$\Cbwu=\Dwu^\vee$ for any $u\le w$.
\end{proposition}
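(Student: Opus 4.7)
The plan is to adapt the two-lemma approach of Proposition~\ref{prop:dual} from the case $u = w$ to general $u \le w$. Since $\Dwu$ is generated by root vectors $e_b - e_a$, its dual $\Dwu^\vee$ is bounded by the hyperplanes $x_a = x_b$, which are walls of the Weyl chambers $C(v)$, so $\Dwu^\vee$ is automatically a union of closures of chambers. Moreover, the indecomposable subset $\Refwu$ spans the same cone $\Dwu$ as the larger set $\wideRefwu$, so I may work with $\wideRefwu$ throughout. Combining this with Corollary~\ref{coro:maximal_cone} and Proposition~\ref{prop_set_of_limit_points}, the equality $\Cbwu = \Dwu^\vee$ reduces to the statement: for every $v \in \mathfrak{S}_n$, one has $v' = u$ if and only if $a$ precedes $b$ in the one-line notation of $v$ for every $(a, b) \in \wideRefwu$.

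For the $(\Rightarrow)$ direction, I would induct on $\ell(v) - \ell(u)$ along $[u, u_w]_R$, with trivial base case $v = u$. In the inductive step, consider $vs_p \in [u, u_w]_R$ with $\ell(vs_p) = \ell(v) + 1$; the only risky case is $\{v(p), v(p+1)\} = \{a, b\}$ for some $(a, b) \in \wideRefwu$, whence the induction hypothesis forces $v(p) = a$ and $v(p+1) = b$ with $a < b$. I would rule this out by showing $(vs_p)' \ne u$, as follows. The algorithms computing $v'$ and $(vs_p)'$ agree through step $i - 1$ where $i$ is such that $a = u(i)$, since neither $p$ nor $p+1$ has been picked at those earlier steps and swapping their values leaves the pass/fail outcomes of unpicked smaller positions intact. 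At step $i$, however, position $p$ of $vs_p$ holds $b$, and the condition $\{u(1), \ldots, u(i-1), b\}\uparrow \le w^{(i)}$ holds precisely because $t_{a,b}u \le w$ is built into the definition of $\wideRefwu$. Thus $(vs_p)$'s algorithm picks position $p$ with value $b$ at step $i$, giving $(vs_p)'(i) = b \ne a$, a contradiction.

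For the $(\Leftarrow)$ direction, I proceed by contrapositive. Assume $v' \ne u$ and let $d$ be the smallest index with $v'(d) \ne u(d)$; write $a = u(d)$ and $b = v'(d) = u(d^*)$, so $d < d^*$ (by minimality of $d$ and injectivity of $u$). The position $q$ of $a$ in $v$ lies outside $\{i_1, \ldots, i_{d-1}\}$ and satisfies the step-$d$ condition trivially (since $\{u(1),\ldots,u(d-1),a\}\uparrow = u^{(d)} \le w^{(d)}$), so the minimality of $i_d$ forces $q > i_d$; hence $b$ precedes $a$ in $v$. If $(a, b) \in \wideRefwu$, we are done. Otherwise, following the template of the second supporting lemma in Section~\ref{sect:4}, I would refine: depending on whether $a > b$ or $a < b$, I locate either an index $\ell \in [d, d^* - 1]$ with $(u(\ell), b) \in \wideRefwu$ and $b$ preceding $u(\ell)$ in $v$, or an index $d^{**} \in (d, d^*)$ with $(a, u(d^{**})) \in \wideRefwu$ and $u(d^{**})$ preceding $a$ in $v$. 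The gap condition follows from choosing the refined index maximally (resp.\ minimally), and the transposition condition comes from the step-$d$ algorithm condition $\{u(1),\ldots,u(d-1),b\}\uparrow \le w^{(d)}$ together with the minimality of $d$.

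The reverse direction will be the main obstacle. In Section~\ref{sect:4}'s setup ($u = w$), one automatically has $a > b$ (since $v' < w$) and the transposition condition $t_{\cdot,\cdot}w \le w$ is automatic from length; neither simplification is available for a general $u \le w$. The refinement therefore requires a case analysis distinguishing $a > b$ and $a < b$ and a careful comparison of the sorted tuples $u^{(d')}$, $(t_{\cdot,\cdot}u)^{(d')}$, and $w^{(d')}$ for indices $d \le d' \le d^*$.
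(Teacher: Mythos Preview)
Your forward direction is correct and is exactly the paper's Lemma~\ref{lemm:Cvcontained}; your Case~1 ($a>b$) of the reverse direction also goes through, since choosing $\ell\in[d,d^*-1]$ maximal with $b<u(\ell)\le a$ gives $(u(\ell),b)\in\wideRefwu$, and the positional argument (every value ahead of $b$ in $v$, other than $u(1),\dots,u(d-1)$, exceeds $a\ge u(\ell)$) forces $b$ to precede $u(\ell)$ in $v$.

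The gap is in Case~2 ($a<b$): the assertion ``$u(d^{**})$ precedes $a$ in $v$'' is not justified, and in fact can fail. Take $w=25134$, $u=21534$, $v=32154$. One checks $v'=23154$, so $d=2$, $a=u(2)=1$, $b=v'(2)=3=u(4)$, $d^*=4$; the only candidate is $d^{**}=3$ with $u(3)=5$, and indeed $(1,5)\in\wideRefwu$ (since $t_{1,5}u=w$). But in $v$ the value $1$ sits at position $3$ and $5$ at position $4$, so $5$ does \emph{not} precede $1$. The element of $\wideRefwu$ that \emph{is} violated here is $(5,3)=(u(3),u(4))$, which your scheme does not locate. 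The Section~\ref{sect:4} template succeeds precisely because there $u(\ell)\le a$, which pushes $u(\ell)$ past position $i_d$; once $u(d^{**})>b>a$, that positional argument disappears, and the step-$d$ inequality $\{u(1),\dots,u(d-1),b\}\!\uparrow\le w^{(d)}$ gives no control over where $u(d^{**})$ sits in $v$.

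The paper avoids this difficulty by a genuinely different argument for the reverse inclusion. Rather than exhibiting a violated pair directly, it argues at a wall: if $C_w(u)\subsetneq D_w(u)^\vee$, choose adjacent chambers $C(v)\subset C_w(u)$ and $C(vs_i)\subset D_w(u)^\vee\setminus C_w(u)$. The geometric Lemma~\ref{lemm:6-1} (adjacent maximal cones in the fan of $Y_w$ correspond to $T$-fixed points joined by a $T$-stable curve, hence differ by a transposition) forces $(vs_i)'=t_{u(j),u(k)}u$ with $j<k$; since $(vs_i)'\le w$, Lemma~\ref{lemm:6-2} then places $e_{u(k)}-e_{u(j)}=e_{v(i+1)}-e_{v(i)}$ inside $D_w(u)$, contradicting $C(vs_i)\subset D_w(u)^\vee$. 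It is this appeal to the GKM structure of $G/B$ that replaces the delicate combinatorics you are attempting in Case~2.
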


As a first step, we prove the following lemma.

\begin{lemma} \label{lemm:w0}
	$\Cu=\Dwou^\vee$ for any $u\in \frak{S}_n$.
\end{lemma}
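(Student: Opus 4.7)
The plan is first to establish Remark~\ref{rema:E_u(w)}(2), namely that
\[
E_{w_0}(u)=\{(u(i),u(i+1)) \mid 1\le i\le n-1\},
\]
after which the dual-cone equality falls out by a direct inspection. When $w=w_0$, every permutation satisfies $v\le w_0$, so the Bruhat condition $t_{u(i),u(j)}u\le w_0$ in Definition~\ref{defi:tildeE_u(w)} is automatic. Consequently $\widetilde{E}_{w_0}(u)$ consists of exactly those pairs $(u(i),u(j))$ with $i<j$ for which no $u(k)$ with $i<k<j$ has value strictly between $u(i)$ and $u(j)$.

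Next I would identify the indecomposable elements of $\widetilde{E}_{w_0}(u)$. Each consecutive pair $(u(i),u(i+1))$ trivially lies in $\widetilde{E}_{w_0}(u)$, since there is no position strictly between $i$ and $i+1$. To see that it is indecomposable, suppose it admits a telescoping decomposition
\[
(u(i),u(i+1))=(u(p_0),u(q_0))+(u(p_1),u(q_1))+\cdots+(u(p_{k-1}),u(q_{k-1}))
\]
with each summand in $\widetilde{E}_{w_0}(u)$, $u(p_0)=u(i)$, $u(q_{k-1})=u(i+1)$, and matching conditions $u(q_m)=u(p_{m+1})$. Because each summand is in $\widetilde{E}_{w_0}(u)$ one has $p_m<q_m$, and injectivity of $u$ promotes the matching condition to $q_m=p_{m+1}$, yielding the strictly increasing chain
\[
i=p_0<q_0=p_1<q_1=p_2<\cdots<p_{k-1}<q_{k-1}=i+1,
\]
which forces $k=1$. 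Conversely, any $(u(i),u(j))\in\widetilde{E}_{w_0}(u)$ with $j-i\ge 2$ admits the telescoping decomposition $(u(i),u(j))=\sum_{r=i}^{j-1}(u(r),u(r+1))$, all of whose summands lie in $\widetilde{E}_{w_0}(u)$ and differ from $(u(i),u(j))$, so it is decomposable. Therefore $E_{w_0}(u)$ equals the collection of consecutive pairs.

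With this description in hand, $\Dwou$ is spanned by $\{e_{u(i+1)}-e_{u(i)}\mid 1\le i\le n-1\}$, and its dual is
\[
\Dwou^\vee=\{x\in\R^n\mid x_{u(i+1)}\ge x_{u(i)}\text{ for all }i\}=\{x\mid x_{u(1)}\le x_{u(2)}\le\cdots\le x_{u(n)}\}=C(u),
\]
as desired. There is no serious obstacle in the argument; the only delicate point is reading \emph{decomposable} as admitting a telescoping sum of at least two distinct elements of $\widetilde{E}_{w_0}(u)$, an interpretation that is consistent with Example~\ref{exam:Rwu} where $(2,3)=(2,1)+(1,4)+(4,3)$ is the witnessed decomposition.
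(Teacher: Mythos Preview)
Your proof is correct, and it takes a somewhat different route from the paper's. You first establish Remark~\ref{rema:E_u(w)}(2) in full---showing that $E_{w_0}(u)$ is exactly the set of consecutive pairs $(u(i),u(i+1))$---and then read off the dual cone directly from this generating set. The paper instead argues by double inclusion without first identifying $E_{w_0}(u)$ explicitly: the inclusion $C(u)\subset D_{w_0}(u)^\vee$ is immediate, and for the reverse the paper assumes some other chamber $C(v)$ lies in $D_{w_0}(u)^\vee$, picks a pair $(a,b)$ whose relative order differs in $u$ and $v$, and produces a chain of elements of $E_{w_0}(u)$ linking $a$ to $b$; each link forces the corresponding order in $v$, yielding a contradiction.

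Your approach is more self-contained (you actually prove the remark the paper only states) and the final dual-cone identification is a one-liner once the generators are known. The paper's argument avoids the explicit combinatorial identification of $E_{w_0}(u)$ and works instead with a connectivity property, but the underlying content is essentially the same: the consecutive pairs $(u(i),u(i+1))$ generate enough constraints to determine $u$ uniquely among all permutations.
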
 

\begin{proof}
	Let $(a,b)$ be an arbitrary element in $\Ref_{w_0}(u)$. Then $a$ appears ahead of $b$ in the one-line notation for $u$. Therefore $e_b-e_a$ is non-negative on the cone $\Cu$, which means that $\Cu\subset \Dwou^\vee$. 
	Suppose $\Cu\subsetneq \Dwou^\vee$ (and we deduce a contradiction). Then there is some $v(\not=u)\in \frak{S}_n$ such that the relative interior of the cone $\Cv$ intersects with $\Dwou^{\vee}$.
	Since $v\not=u$, there is a pair $(a,b)$ such that $a$ appears ahead of $b$ in the one-line notation for $u$ while $b$ appears ahead of $a$ in the one-line notation for $v$. Suppose $a>b$. Then there is a sequence of pairs $(a_{i-1},a_{i})\in \Refwou$ $(i=1,\dots,k)$ such that 
	\begin{equation} \label{eq:inequality}
	a=a_0>a_1>\cdots >a_{k}=b.
	\end{equation} 
	Let $x \in \Int(C(v)) \cap \Dwou^{\vee}$.
	Since $(a_{i-1},a_i)\in \Refwou$, 
	\[
	\langle e_{a_i} - e_{a_{i-1}}, x \rangle \geq 0.
	\]
	This means that in the one-line notation for $v$, $a_{i-1}$ appears ahead of $a_i$ for any $i=1,\dots,k$ so that $a$ appears ahead of $b$, which is a contradiction. The same argument works when $a<b$ if the inequalities in \eqref{eq:inequality} are reversed. 
\end{proof}

\begin{lemma} \label{lemm:Cvcontained}
	Let $u\le w$. If $v'=u$, then $\Cv\subset \Dwu^\vee$. Therefore $\Cbwu\subset \Dwu^\vee$ by Corollary~\ref{coro:maximal_cone}.
\end{lemma}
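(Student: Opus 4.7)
The plan is to unpack $\Cv\subseteq \Dwu^\vee$ into a combinatorial statement about the one-line notation of $v$. Because $\Dwu$ is spanned by the vectors $e_b-e_a$ with $(a,b)\in\Refwu\subseteq\wideRefwu$, it suffices to show that $\langle e_b-e_a,x\rangle\ge 0$ for every $x\in\Cv$ and every $(a,b)\in\wideRefwu$; and by the definition of $\Cv$ this inequality is equivalent to saying that $a$ appears before $b$ in the one-line notation of $v$. So the combinatorial goal becomes: whenever $v'=u$ and $(a,b)\in\wideRefwu$, the value $a$ precedes $b$ in $v$.

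To prove this, write $a=u(i)$ and $b=u(j)$ with $i<j$; from $t_{a,b}u\le w$ and \eqref{equation_Bruhat_order_equivalet} we get $(t_{a,b}u)^{(i)}\le w^{(i)}$, which unwinds (since $t_{a,b}u$ agrees with $u$ on positions $1,\dots,i-1$ and has $b$ at position $i$) to $\{u(1),\dots,u(i-1),b\}\!\uparrow\le w^{(i)}$. Now argue by contradiction: suppose $b$ occurs before $a$ in $v$, say $v(p)=b$ and $v(q)=a$ with $p<q$. Because $v'=u$, the indices $i_1,\dots,i_n$ produced by the construction of $v'$ satisfy $v(i_k)=u(k)$, so $i_i=q$ and $i_j=p$. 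Since $p$ is only consumed at step $j>i$, it remains available at step $i$; the minimality clause of the construction therefore forces $\{u(1),\dots,u(i-1),b\}\!\uparrow\not\le w^{(i)}$, contradicting the displayed Bruhat inequality.

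The only nontrivial piece of bookkeeping is confirming that $p$ is indeed available at step $i$: since $b$ occurs exactly once in $v$'s one-line notation, the requirement $v(i_j)=u(j)=b$ pins $i_j=p$, and then $j>i$ ensures $p\notin\{i_1,\dots,i_{i-1}\}$. Once this is verified, the minimality built into the construction of $v'$ collides directly with the Bruhat inequality extracted from $t_{a,b}u\le w$, completing the argument. The statement $\Cbwu\subseteq \Dwu^\vee$ then follows immediately from Corollary~\ref{coro:maximal_cone}. I note in passing that the length condition $|\ell(u)-\ell(t_{a,b}u)|=1$ in the definition of $\wideRefwu$ is not used anywhere in this argument; it plays no role in producing the dual inclusion and only enters later when identifying which rays of $\Dwu$ arise as indecomposable edge vectors.
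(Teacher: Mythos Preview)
Your proof is correct and takes a genuinely different route from the paper's. The paper proceeds by induction on $k$, where $v=us_{i_1}\cdots s_{i_k}$ with $\ell(v)=\ell(u)+k$ (using Lemma~\ref{lemm:vprime} to know such an expression exists). The base case $v=u$ is handled via Lemma~\ref{lemm:w0}, and the inductive step checks that applying one simple reflection on the right cannot reverse the relative order of $a$ and $b$ in the one-line notation, by arguing that the exceptional case $\{v(p),v(p+1)\}=\{a,b\}$ leads to the same Bruhat contradiction $\{u(1),\dots,u(s-1),b\}\!\uparrow\le w^{(s)}$ versus the construction of $v'$.

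Your argument bypasses both Lemma~\ref{lemm:vprime} and the induction entirely: you go straight to the defining minimality in the construction of $v'$ and pit it directly against the inequality $(t_{a,b}u)^{(i)}\le w^{(i)}$. This is cleaner and more self-contained. The paper's inductive framework, on the other hand, mirrors the structure used elsewhere (e.g., in the proofs of Lemma~\ref{lemm:vprime} and Proposition~\ref{prop_set_of_limit_points}) and makes the role of the right weak order chain $[u,v]_R$ explicit, which connects more visibly to the interval description of $\Cbwu$. Your observation that the length condition $|\ell(u)-\ell(t_{a,b}u)|=1$ plays no role in this inclusion is accurate; the same is implicitly true in the paper's argument.
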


\begin{proof}
	Since $v'=u$, we have $v\ge_R u$ by Lemma~\ref{lemm:vprime}. Therefore $v$ is of the form
	\[
	v=us_{i_1}\cdots s_{i_k} \quad\text{with $\ell(v)=\ell(u)+k$}.
	\]
	We shall prove the lemma by induction on $k$.
	When $k=0$, that is, $v=u$, $\Cv=\Cu$. By Lemma~\ref{lemm:w0}, $\Cu=\Dwou^\vee$. Since $\Dwou\supset \Dwu$, we obtain $\Cu\subset \Dwu^\vee$ by taking their dual. This proves the lemma when $k=0$. 
	
	Suppose $k\ge 1$ and the lemma holds for $v$ with $\ell(v)\le \ell(u)+k-1$. We set 
	\[
	\bar{v}=us_{i_1}\cdots s_{i_{k-1}}\qquad \text{and}\qquad i_k=p.
	\]
	Then $v=\bar{v}s_p$, i.e.
	\begin{equation} \label{eq:vbarv}
		\arraycolsep=1.4pt
	\begin{array}{rcccccccc}
	\bar{v}&=v(1)&\cdots &v(p-1)&v(p+1)&v(p)&v(p+2)&\cdots& v(n),\\
	{v}&=v(1)&\cdots &v(p-1)&v(p)&v(p+1)&v(p+2)&\cdots &v(n),
	\end{array}
	\end{equation}
	and $\Cbarv\subset \Dwu^\vee$ by induction assumption (note that $\bar{v}'=u$ since $\bar{v}\in [u,v]_R$ and $v'=u$). 
	Let $(a,b)$ be an arbitrary element of $\Refwu$. Since $e_b-e_a$ is non-negative on $\Cbarv$, $a$ appears ahead of $b$ in the one-line notation for $\bar{v}$. 
	
	If $\{v(p),v(p+1)\}\not=\{a,b\}$, then the order of $a$ and $b$ in the one-line notation for $v$ is same as that for $\bar{v}$. Therefore, $e_b-e_a$ is non-negative on $\Cv$ as well. Thus it suffices to show that the case where $\{v(p),v(p+1)\}=\{a,b\}$ does not occur. 
	Suppose $\{v(p),v(p+1)\}=\{a,b\}$. Then we have
	\begin{equation} \label{eq:avpvpb}
	a=v(p+1)<v(p)=b
	\end{equation}
	because $a$ appears ahead of $b$ in the one-line notation for $\bar{v}$ and $v=\bar{v}s_p$ with $\ell(v)=\ell(\bar{v})+1$. On the other hand, since $(a,b)\in\Refwu$, $a$ appears ahead of $b$ in the one-line notation for $u$; so $a=u(s)$ and $b=u(t)$ with some $s<t$, i.e., 
	\begin{equation*} \label{eq:tabu}
		\arraycolsep=1.4pt
	\begin{array}{rccccccccccc}
	u&=u(1)&\cdots &u(s-1)& a& u(s+1)&\cdots &u(t-1)& b& u(t+1)&\cdots &u(n),\\
	t_{a,b}u&=u(1)&\cdots &u(s-1)& b& u(s+1)&\cdots &u(t-1)& a& u(t+1)&\cdots& u(n).
	\end{array}
	\end{equation*}
	Since $(a,b)\in\Refwu$, we have $t_{a,b}u\le w$ by definition. This means that 
	\begin{equation} \label{eq:usb}
	\{u(1),\dots,u(s-1),b\}\uparrow\le w^{(s)}.
	\end{equation}
	Now remember that $\bar{v}'=u$. This together with \eqref{eq:vbarv}, \eqref{eq:avpvpb} and \eqref{eq:usb} shows that $v'(s)=b$ and hence $v'\not=u$, a contradiction. 
\end{proof}

\begin{lemma} \label{lemm:6-1}
	Let $x\le w$, $y\le w$ and $x\not=y$. If the cones $\Cwx$ and $\Cwy$ share a common facet, then they are related by a transposition, i.e., $x=t_{a,b}y$ for some transposition $t_{a,b}$.
\end{lemma}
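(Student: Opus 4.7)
The plan is to reduce Lemma~\ref{lemm:6-1} to a combinatorial claim about the operation $v \mapsto v'$ and then verify that claim by running the two greedy constructions in parallel.

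By Corollary~\ref{coro:maximal_cone}, $\Cwx = \bigcup_{v' = x} C(v)$ and $\Cwy = \bigcup_{v' = y} C(v)$. Any shared facet of $\Cwx$ and $\Cwy$ must contain a codimension-one face shared by some Weyl chamber $C(v_1) \subset \Cwx$ and some $C(v_2) \subset \Cwy$, and two distinct Weyl chambers share a facet if and only if they differ by a simple reflection. Hence we may take $v_2 = v s_i$ with $v := v_1$, $v' = x$, and $(v s_i)' = y$. After possibly interchanging $v$ with $v s_i$, assume $a := v(i) < v(i+1) =: b$. The task then reduces to the dichotomy:
\[
(v s_i)' = v' \qquad \text{or} \qquad (v s_i)' = t_{a,b}\,v'.
\]
Since $x \neq y$ by hypothesis, the second alternative must hold, giving $y = t_{a,b}\,x$.

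I would prove the dichotomy by comparing the two greedy constructions step by step. Let $V_k$ and $\tilde V_k$ be the sets of values chosen after $k$ steps of the constructions of $v'$ and $(v s_i)'$ respectively. Until the first step $p$ at which position $i$ or $i+1$ is selected, both constructions make identical choices, so $V_{p-1} = \tilde V_{p-1}$. At step $p$, $v'$ selects position $i$ with value $a$. A case split then arises for $(v s_i)'$ depending on whether the constraint $\{V_{p-1}, b\}\!\uparrow \leq w^{(p)}$ holds. If it fails (the \emph{non-divergence} case), then $(v s_i)'$ selects position $i+1$ with the same value $a$; if it holds (the \emph{divergence} case), then $(v s_i)'$ selects position $i$ with value $b$, and then $\tilde V_p = (V_p \setminus \{a\}) \cup \{b\}$.

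A routine induction shows that the non-divergence case propagates: at every subsequent step both constructions select the same value, so $(v s_i)' = v'$. In the divergence case, I would argue by induction on later steps that $\tilde V_k = (V_k \setminus \{a\}) \cup \{b\}$ persists until some step $q > p$ at which $v'$ selects position $i+1$ (value $b$) and $(v s_i)'$ simultaneously selects position $i+1$ (value $a$); after step $q$ the two constructions coincide. Reading off the one-line notations then yields $(v s_i)' = t_{a,b}\,v'$. The main technical obstacle is this divergence induction: the two constructions now face genuinely different constraints (the $(v s_i)'$ side has $b$ where the $v'$ side has $a$ in the chosen set, and $a < b$ makes the former more restrictive), so one must carefully verify that, at each step before $q$, the greedy ``smallest-index'' rule still selects identical values on both sides and that the synchronizing step $q$ occurs simultaneously in both constructions.
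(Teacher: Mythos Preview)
Your reduction is correct, and the non-divergence case goes through exactly as you describe: once the two greedy processes have selected the same value $a$ at step $p$ (at positions $i$ and $i+1$ respectively), the sequences of values they subsequently encounter, in position order, are identical, so the constructions coincide. The gap is in the divergence case, and it is a real one. After step $p$ you have $\tilde V_{k-1}=(V_{k-1}\setminus\{a\})\cup\{b\}$ with $b>a$, so for any candidate value $c\neq a,b$ the constraint $\{\tilde V_{k-1},c\}\!\uparrow\le w^{(k)}$ is \emph{strictly stronger} than $\{V_{k-1},c\}\!\uparrow\le w^{(k)}$. Your part (A) (``$(vs_i)'$ rejects whatever $v'$ rejects'') is fine, but the crucial part (B) (``$(vs_i)'$ accepts the position $j$ that $v'$ accepts'') does not follow from the data you list. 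Concretely, using the criterion $X\!\uparrow\le Y\!\uparrow\iff |X_{\le t}|\ge|Y_{\le t}|$ for all $t$, the only problematic range is $a\le t<b$, and there one needs an extra unit of slack: either $[c\le t]\ge[w(k)\le t]$, or strict inequality in $\tilde V_{k-1}\!\uparrow\le w^{(k-1)}$ at that $t$. Neither is forced by what you have written down; one must bring in the divergence condition $\{V_{p-1},b\}\!\uparrow\le w^{(p)}$ together with the minimality built into the greedy choice of $c=v'(k)$, and the argument becomes delicate. You flag this as ``the main technical obstacle'' but do not resolve it, so as written the proof is incomplete.

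The paper's proof sidesteps the combinatorics entirely. It observes that a common facet of $C_w(x)$ and $C_w(y)$ corresponds to a codimension-one subtorus fixing a $T$-invariant $\mathbb{CP}^1$ in $G/B$ through $xB$ and $yB$; by the known GKM structure of the flag variety (edges of the moment graph are transpositions), $y=t_{a,b}x$. This is a two-line argument once one invokes \cite{care94}. Your combinatorial dichotomy $(vs_i)'\in\{v',\,t_{v(i),v(i+1)}v'\}$ is in fact true---the geometric proof shows it, since the wall containing $C(v)\cap C(vs_i)$ lies in the hyperplane $\{a_{v(i)}=a_{v(i+1)}\}$---so your programme can in principle be completed, and would yield a self-contained combinatorial proof independent of the GKM machinery. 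But the divergence induction needs substantially more work than your sketch provides.
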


\begin{proof}
	$x$ and $y$ are fixed points in the flag variety $G/B$ under the torus action and the assumption implies that they are in $\C P^1$ fixed pointwise under the codimension one subtorus corresponding to the facet $\Cwx\cap \Cwy$. In other words, $x$ and $y$ are vertices of the GKM graph of $G/B$ and they are joined by an edge. 
	Therefore they are related by a transposition.
	Recall from \cite{care94} and \cite[Proposition~2.1]{tymo08} that the GKM graph of $G/B$ is the labeled directed graph whose vertices are permutations $w \in \mathfrak{S}_n$ and edges are given by each pair of permutations $(w, t_{a,b}w)$ for some transposition $t_{a,b}$. (See~\cite{GKM98,GZ01} for the definition of GKM manifolds and GKM graphs.)
\end{proof}
\begin{remark}
	Lemma~\ref{lemm:6-1} can be proven using the fact that the fan of $Y_w$ is the normal fan of $Q_{id, w^{-1}}$ (see~Remark~\ref{rmk_moment_map}). By~\cite[Theorem~4.1]{ts-wi15},  every face of a Bruhat interval polytope is a Bruhat interval polytope. Hence an edge of $Q_{id, w^{-1}}$ is $Q_{x,y}$ for certain permutations $x$ and $y$. Since $Q_{x,y}$ is one-dimensional, the Bruhat interval $[x,y]$ consists of two elements $\{x,y\}$, so that $x = t_{a,b}y$, which proves Lemma~\ref{lemm:6-1}.
\end{remark}

We prepare one more lemma.

\begin{lemma} \label{lemm:6-2}
	Let $u\le w$. If $t_{u(j),u(k)}u\le w$ with $j<k$, then $e_{u(k)}-e_{u(j)}\in \Dwu$. 
\end{lemma}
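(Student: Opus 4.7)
The natural induction parameter is
\[
m := \#\{\ell : j < \ell < k \text{ and } u(\ell) \text{ lies strictly between } u(j) \text{ and } u(k)\},
\]
since $|\ell(u) - \ell(t_{u(j),u(k)}u)| = 2m+1$. For the base case $m=0$, the pair $(u(j),u(k))$ belongs to $\wideRefwu$ by Definition~\ref{defi:tildeE_u(w)}. Every element of $\wideRefwu$ is either in $\Refwu$ or a nonnegative sum of other elements of $\wideRefwu$, so iterating the decomposition shows that all of $\wideRefwu$ sits inside $\Dwu$ as vectors, and in particular $e_{u(k)} - e_{u(j)} \in \Dwu$.

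For the inductive step $m \ge 1$, pick a position $j'$ with $j < j' < k$ and $u(j')$ strictly between $u(j)$ and $u(k)$, and write
\[
e_{u(k)} - e_{u(j)} = \bigl(e_{u(k)} - e_{u(j')}\bigr) + \bigl(e_{u(j')} - e_{u(j)}\bigr).
\]
The sub-transpositions $t_{u(j),u(j')}u$ (pair $j < j'$) and $t_{u(j'),u(k)}u$ (pair $j' < k$) both have strictly smaller $m$-parameter, because the position $j'$ itself drops out of the count and the range of ``intermediate'' values shrinks. Hence once we know both sub-transpositions remain $\le w$, the inductive hypothesis gives $e_{u(j')} - e_{u(j)}, e_{u(k)} - e_{u(j')} \in \Dwu$, and the sum above lies in $\Dwu$.

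The remaining task is to verify $t_{u(j),u(j')}u \le w$ and $t_{u(j'),u(k)}u \le w$. When $u(j) > u(k)$ this is trivial: both sub-transpositions simply remove an inversion of $u$, producing permutations $< u \le w$. The substantive case is $u(j) < u(k)$; setting $a = u(j) < c = u(j') < b = u(k)$, one appeals to the Bruhat criterion~\eqref{equation_Bruhat_order_equivalet}. For each $d$, $(t_{a,c}u)^{(d)}$ and $(t_{a,b}u)^{(d)}$ are obtained from $u^{(d)}$ by replacing the entry $a$ with $c$ or with $b$ respectively (in the $d$-ranges where the replacement is visible), and a componentwise comparison of sorted $d$-tuples gives $(t_{a,c}u)^{(d)} \le (t_{a,b}u)^{(d)} \le w^{(d)}$; the analogous comparison handles $t_{c,b}u$. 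This Chevalley-style sorted-tuple bookkeeping is the only substantive step; everything else is routine induction and elementary decomposition of vectors.
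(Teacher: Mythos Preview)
Your argument is correct.  The overall skeleton (split $(u(j),u(k))$ through an intermediate value and check that each sub-transposition stays $\le w$) matches the paper, and in the inversion case $u(j)>u(k)$ both proofs are identical.  The genuine difference is in the case $u(j)<u(k)$.  The paper fixes a maximal chain $u(j)=a_0<a_1<\dots<a_{p+1}=u(k)$ of adjacent intermediate values and then proves each $t_{a_\ell,a_{\ell+1}}u\le w$ by an explicit product formula (their identity~(7.9), the identity labeled \eqref{eq:6-1} in the source): each $t_{a_\ell,a_{\ell+1}}u$ is written as a string of inversion-removing transpositions applied to $t_{a_0,a_{p+1}}u\le w$, hence stays $\le w$.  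You instead appeal directly to the tableau criterion~\eqref{equation_Bruhat_order_equivalet}: with $a=u(j)<c=u(j')<b=u(k)$ one checks $(t_{a,c}u)^{(d)}\le(t_{a,b}u)^{(d)}$ and $(t_{c,b}u)^{(d)}\le(t_{a,b}u)^{(d)}$ componentwise for each $d$ (using that replacing one element of a $d$-set by a larger one increases the sorted tuple, and falling back on $u^{(d)}\le w^{(d)}$ in the ranges where the swap is invisible).  Your route is a bit cleaner and avoids the somewhat opaque identity~\eqref{eq:6-1}; the paper's route is a one-shot construction with no induction.  Two small points worth making explicit in a final write-up: (i) the iterated decomposition in your base case terminates because every summand $(c,c')\in\wideRefwu$ has $u^{-1}(c)<u^{-1}(c')$, so each refinement strictly increases the length of the chain of positions between $u^{-1}(u(j))$ and $u^{-1}(u(k))$; (ii) for $t_{c,b}u$ spell out the range $j'\le d<k$ separately, since there the two sorted sets differ by swapping $a$ for $c$ rather than by ``replacing one entry with $b$''.
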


\begin{proof}
	It suffices to show that 
	\begin{quote}
		$(*)$\qquad $(u(j),u(k))$ is a finite sum of elements in $\wideRefwu$. 
	\end{quote}
	If there is no $j<m<k$ such that $u(m)$ is in between $u(j)$ and $u(k)$ (we say that $u(j)$ and $u(k)$ are \emph{adjacent} in this case), then $(u(j),u(k))\in \wideRefwu$ by definition of $\wideRefwu$ and hence $e_{u(k)}-e_{u(j)}\in \Dwu$ by definition of $\Dwu$. Thus we may assume that $u(j)$ and $u(k)$ are not adjacent. 
	We consider two cases. 
	
	Case 1. The case where $u(j)>u(k)$. In this case, there is a sequence $j<m_1<m_2<\dots<m_p<k$ such that 
	\[
	\begin{split}
	&u(j)>u(m_1)>u(m_2)>\dots>u(m_p)>u(k)\quad\text{and}\\
	&\text{$u(m_\ell)$ and $u(m_{\ell+1})$ are adjacent for $\ell=0,1,\dots,p$, where $m_0=j, m_{p+1}=k$.}
	\end{split}
	\]
	Since $(u(m_\ell),u(m_{\ell+1}))$ is an inversion of $u$ and $u\le w$, we have $t_{u(m_\ell),u(m_{\ell+1})}u\le w$ and hence $(u(m_\ell),u(m_{\ell+1}))\in \wideRefwu$ for $\ell=0,1,\dots,p$. This shows that 
	\begin{equation} \label{eq:6-0}
	(u(j),u(k))=\sum_{\ell=0}^p (u(m_\ell),u(m_{\ell+1}))
	\end{equation}
	proving the assertion $(*)$. 
	
	Case 2. The case where $u(j)<u(k)$. Similarly to Case 1 above, there is a sequence $j<m_1<m_2<\dots<m_p<k$ such that 
	\[
	\begin{split}
	&u(j)<u(m_1)<u(m_2)<\dots<u(m_p)<u(k)\quad\text{and}\\
	&\text{$u(m_\ell)$ and $u(m_{\ell+1})$ are adjacent for $\ell=0,1,\dots,p$, where $m_0=j, m_{p+1}=k$.}
	\end{split}
	\]
	In this case, since $(u(m_\ell),u(m_{\ell+1}))$ is not an inversion of $u$, it is not immediate that $t_{u(m_\ell),u(m_{\ell+1})}u\le w$. However, $t_{u(j),u(k)}u\le w$ by assumption and this will imply $t_{u(m_\ell),u(m_{\ell+1})}u\le w$. Indeed, setting $u(m_\ell)=a_\ell$ for simplicity, one can see that 
	\begin{equation} \label{eq:6-1}
	\begin{split}
	&t_{a_0,a_1}u=t_{a_{p+1},a_1}t_{a_1,a_0}(t_{a_0,a_{p+1}}u),\\
	&t_{a_\ell,a_{\ell+1}}u=t_{a_{p+1},a_\ell}
	t_{a_{\ell+1},a_0}t_{a_\ell,a_0}t_{a_{p+1},a_{\ell+1}}(t_{a_0,a_{p+1}}u)\quad\text{for $\ell=1,\dots,p$}.
	\end{split}
	\end{equation}
	Here $t_{a_0,a_{p+1}}u=t_{u(j),u(k)}u\le w$ by assumption and all the transpositions $t_{a,b}$ in~\eqref{eq:6-1} (except $t_{a_0,a_{p+1}}$) have $a>b$ and $(a,b)$ is an inversion of the permutation, say $v$, which $t_{a,b}$ is applied to; so if $v\le w$, then $t_{a,b}v\le w$. Therefore \eqref{eq:6-1} shows that $t_{a_\ell,a_{\ell+1}}u\le w$ for $\ell=0,1,\dots,p$ and hence $(u(m_\ell),u(m_{\ell+1}))=(a_\ell,a_{\ell+1})\in \wideRefwu$, proving the assertion $(*)$ by \eqref{eq:6-0}. 
\end{proof}

Now we are in a position to prove Proposition~\ref{prop_dual_Cuw}.

\begin{proof}[Proof of Proposition~\ref{prop_dual_Cuw}]
	We know $\Cbwu\subset \Dwu^\vee$ by Lemma~\ref{lemm:Cvcontained}. Suppose that $\Cbwu\subsetneq \Dwu^\vee$. Then there exist $v\in\frak{S}_n$ and a simple reflection $s_i$ such that 
	\begin{equation} \label{eq:6-2}
	\Cv\subset \Cbwu,\quad \Cvsi\nsubseteq \Cbwu,\quad \Int(\Cvsi) \cap \Dwu^{\vee} \neq \emptyset.
	\end{equation}
	
	\medskip
	\noindent
	{\bf Claim.} $e_{v(i+1)}-e_{v(i)}\in \Dwu$. 
	\medskip
	
	We assume the claim and complete the proof. 
	Let $y \in \Int(\Cvsi) \cap \Dwu^{\vee} \neq \emptyset$. Then
	the claim says that $\langle e_{v(i+1)}-e_{v(i)}, y \rangle \geq 0$. On the other hand, in the one-line notation of $vs_i$, $v(i+1)$ appears on the left of $v(i)$ and this means that $\langle e_{v(i+1)}-e_{v(i)}, y \rangle \leq 0$. Therefore $\langle e_{v(i+1)}-e_{v(i)}, y \rangle = 0$ but this contradicts $y \in \Int(\Cvsi)$. 
	Thus, it suffices to prove the claim above. 
	
	By \eqref{eq:6-2}, $\Cvsi\subset \Cwx$ for some $x(\not=u)\le w$. The intersection $\Cv\cap \Cvsi$ is a facet of $\Cv$ and $\Cvsi$, and $\Cbwu\cap \Cwx$ contains $\Cv\cap \Cvsi$, so $\Cbwu$ and $\Cwx$ are adjacent. Therefore $x=t_{a,b}u$ for some transposition $t_{a,b}$ by Lemma~\ref{lemm:6-1}. Since $\Cvsi\subset \Cwx$ and $\Cv\subset \Cbwu$, it follows that 
	\begin{equation} \label{eq:6-3}
	(vs_i)'=x=t_{a,b}u\quad\text{and}\quad v'=u.
	\end{equation}
	We define $j$ and $k$ by 
	\begin{equation} \label{eq:6-4}
	v(i)=u(j),\quad v(i+1)=u(k).
	\end{equation}
	We consider two cases. 
	
	Case 1. The case where $v(i)<v(i+1)$. Since $v'=u$, we have $j<k$ in this case. This together with $(vs_i)'\not=u$ implies that 
	\begin{equation} \label{eq:6-5}
	\arraycolsep=1.4pt
	\begin{array}{rccccccccc}
	v'=u&=u(1)&\cdots &u(j-1)&u(j)&\cdots, \\
	(vs_i)'&=u(1)&\cdots& u(j-1)&u(k)&\cdots.
	\end{array}
	\end{equation}
	On the other hand, we know $(vs_i)'=t_{a,b}u$ by \eqref{eq:6-3}. This together with \eqref{eq:6-5} implies that 
	\[
	(vs_i)'=t_{u(j),u(k)}u.
	\]
	Since $j<k$ and $(vs_i)'\le w$, the above together with Lemma~\ref{lemm:6-2} shows that $e_{u(k)}-e_{u(j)}\in \Dwu$. Then the claim follows from \eqref{eq:6-4}. 
	
	Case 2. The case where $v(i)>v(i+1)$. We claim $j<k$ in this case, too. Indeed, since $v(i+1)$ appears on the left of $v(i)$ in the one-line notation of $vs_i$ and $v(i+1)<v(i)$, one sees that $v(i+1)(=u(k))$ appears ahead of $v(i)(=u(j))$ in the one line notation of $(vs_i)'$. Therefore, if $k<j$, then $(vs_i)'$ coincides with $u$ which contradicts the assumption $(vs_i)'\not=u$. Therefore $j<k$. Then \eqref{eq:6-5} holds in this case too and the claim follows from the same argument as in Case~1. 
	This completes the proof of the claim and the proposition. 
\end{proof}

We associate a graph $\Gwu$ to $\Refwu$ as before. 
Note that $\Gamma_w(id)$ is the disjoint union of path graphs (in particular, a forest) by Remark~\ref{rema:E_u(w)}(\ref{Ew_id}).  
Then by Lemma~\ref{lem_Dw_simplicial_iff_smooth}, we obtain

\begin{corollary}\label{cor_Duw_smooth_iff_Guw_forest}
	The dual cone $\Dwu$ is smooth if and only if $\Gwu$ is a forest. 
	Indeed, $\Yw$ is smooth at the fixed point $uB$ if and only if $\Gwu$ is a forest.
	In particular, $Y_w$ is smooth at the fixed point $id B$ for any $w$.   
\end{corollary}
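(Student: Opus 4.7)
The plan is to combine Proposition~\ref{prop_dual_Cuw}, which identifies the maximal cone $C_w(u)$ with the dual $D_w(u)^\vee$, with Lemma~\ref{lem_Dw_simplicial_iff_smooth}, which gives the graph-theoretic criterion for a cone generated by edge-difference vectors to be nonsingular. First I would note that $Y_w$ is smooth at $uB$ if and only if the maximal cone $C_w(u)$ of its fan is nonsingular, and that a cone is nonsingular if and only if its dual cone is nonsingular. By Proposition~\ref{prop_dual_Cuw}, this dual is precisely $D_w(u)$. Thus the second claim of the corollary reduces to the first: $D_w(u)$ is smooth if and only if $\Gamma_w(u)$ is a forest.

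Next, to apply Lemma~\ref{lem_Dw_simplicial_iff_smooth} to $D_w(u)$ and $\Gamma_w(u)$, I need to verify that $\{e_b-e_a\mid (a,b)\in \Ref_w(u)\}$ are actually the edge (primitive ray) vectors of $D_w(u)$, not merely a spanning set. This is the analogue of Lemma~\ref{lemm:edge}. The key is that by definition $\Ref_w(u)$ consists of the \emph{indecomposable} elements of $\widetilde\Ref_w(u)$, i.e.\ those pairs $(a,b)$ that cannot be written as $(a,c_1)+(c_1,c_2)+\cdots+(c_{k-1},b)$ with the intermediate pairs in $\widetilde\Ref_w(u)$. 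The proof of Lemma~\ref{lemm:edge} carries over almost verbatim: if $e_b-e_a=\sum c_i(e_{b_i}-e_{a_i})$ with $c_i>0$ and $(a_i,b_i)\in\Ref_w(u)$ distinct from $(a,b)$, then telescoping forces the existence of a chain $(a_1,b_1),(a_2,b_2),\dots,(a_m,b_m)$ with $a_1=a$, $b_m=b$, $b_i=a_{i+1}$, which would decompose $(a,b)$ and contradict its presence in $\Ref_w(u)$.

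Once this is in hand, Lemma~\ref{lem_Dw_simplicial_iff_smooth} applied to $\Gamma_w(u)$ and $D_w(u)$ gives the equivalences simultaneously: $D_w(u)$ simplicial $\iff$ $\Gamma_w(u)$ is a forest $\iff$ $D_w(u)$ nonsingular. Combined with the reduction above, this proves both the first and second assertions of the corollary.

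Finally, for the assertion that $Y_w$ is smooth at $idB$ for every $w$: by Remark~\ref{rema:E_u(w)}(3), every edge of $\Gamma_w(id)$ is of the form $(a,a+1)$, so the edges join only consecutive integers. A graph on $\{1,\dots,n\}$ whose edges are a subset of $\{(a,a+1)\}$ is a disjoint union of paths, hence a forest, and the result follows from the smoothness criterion just established. The main obstacle is the indecomposability argument above, since without it Lemma~\ref{lem_Dw_simplicial_iff_smooth} does not directly apply to $D_w(u)$; everything else is a formal consequence of earlier results.
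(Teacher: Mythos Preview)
Your proposal is correct and follows the same route as the paper: Proposition~\ref{prop_dual_Cuw} identifies $C_w(u)^\vee=D_w(u)$, and Lemma~\ref{lem_Dw_simplicial_iff_smooth} then gives the equivalence with $\Gamma_w(u)$ being a forest; the $u=id$ case follows from Remark~\ref{rema:E_u(w)}(3). You are in fact more careful than the paper, which simply invokes Lemma~\ref{lem_Dw_simplicial_iff_smooth} without explicitly checking that the generators $\{e_b-e_a:(a,b)\in E_w(u)\}$ are \emph{edge} vectors of $D_w(u)$ (a hypothesis of that lemma).

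One small correction to your sketch of that verification: the proof of Lemma~\ref{lemm:edge} does \emph{not} carry over verbatim, because its chain construction uses $a_i>b_i$ (via ``$a=\max_i a_i$'') to force a strictly decreasing sequence, and pairs in $E_w(u)$ need not satisfy $a>b$. The right replacement is a reachability/flow argument: the relation $e_b-e_a=\sum_i c_i(e_{b_i}-e_{a_i})$ with $c_i>0$ says that in the directed graph on $[n]$ with edges $(a_i,b_i)$ the net inflow is $+1$ at $b$, $-1$ at $a$, and $0$ elsewhere; hence some directed path from $a$ to $b$ exists among these edges. Since none of the edges equals $(a,b)$, the path has length $\ge 2$, giving a decomposition $(a,b)=(a,c_1)+(c_1,c_2)+\cdots+(c_{k-1},b)$ with summands in $E_w(u)\subset\widetilde E_w(u)$, contradicting indecomposability. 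With this adjustment your argument is complete.
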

\begin{remark}
	Since $Y_w$ has at least one smooth $T$-fixed point $id B$, it follows from~\cite[Proposition 7.2 and Remark 7.3]{CarrellKuttler03} that $Y_w$ is smooth if and only if the Bruhat graph of $Y_w$ is regular, where regular means that there are exactly $\dim_\C Y_w$ $T$-stable curves at each $T$-fixed point in $Y_w$. 
\end{remark}

Using the fact that $id B$ is a smooth point in $Y_w$, we can give an alternative proof to the following result which is indeed proved for any Lie type in \cite{Karu13Schubert}.    

\begin{corollary}[\cite{Karu13Schubert}] \label{coro:toric_Schubert} 
	A Schubert variety $X_w$ is a toric variety with respect to the $T$-action induced by the left multiplication if and only if $w$ is a product of distinct simple reflections. 
\end{corollary}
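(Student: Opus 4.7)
My plan is to compare $\dim_\C X_w = \ell(w)$ with $\dim_\C Y_w$ by reading off the latter from the dual cone at the fixed point $idB$, and then translate the resulting dimension equality into the stated combinatorial condition. Since $Y_w$ is an irreducible $T$-invariant closed subvariety of the irreducible Schubert variety $X_w$, the toricity $X_w = Y_w$ is equivalent to $\dim_\C Y_w = \ell(w)$.

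By Remark~\ref{rema:E_u(w)}\,(3), $\Ref_w(id) = \{(a,a+1) : s_a \le w\}$, so $D_w(id)$ is spanned by the simple-root vectors $\{e_{a+1}-e_a : s_a \le w\}$ and the associated graph $\Gamma_w(id)$ is a subgraph of the path $1-2-\cdots-n$, in particular a forest. Since $D_w(id)$ is dual to the maximal cone of the fan of $Y_w$ at $idB$ (Proposition~\ref{prop_dual_Cuw}), its dimension equals $\dim_\C Y_w$; combined with the combinatorial identity in Lemma~\ref{lemm:R(w)\Dw}, whose proof depends only on the forest property of the underlying graph, this yields
\[
\dim_\C Y_w = \dim D_w(id) = \#\{a\in[n-1] : s_a \le w\}.
\]

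To finish, I would invoke the standard subword-property fact on $\mathfrak{S}_n$ that $\{a : s_a \le w\}$ coincides with the support $\mathrm{supp}(w)$, i.e., the set of simple reflections occurring in some (equivalently, any) reduced expression of $w$. Thus $X_w = Y_w$ if and only if $\ell(w) = |\mathrm{supp}(w)|$; and since any reduced expression of $w$ has length $\ell(w)$ with all letters drawn from $\mathrm{supp}(w)$, this equality is equivalent to each element of $\mathrm{supp}(w)$ appearing exactly once, i.e., to $w$ being a product of distinct simple reflections. The only technical point requiring attention is that Lemma~\ref{lemm:R(w)\Dw} is stated explicitly for $D_w = D_w(w)$; however, its proof adapts verbatim to $D_w(id)$, as it uses only that the graph is a forest together with the fan-theoretic identification of $\dim D_w(u)$ with $\dim_\C Y_w$ when $D_w(u)$ is dual to a maximal cone.
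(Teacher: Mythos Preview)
Your proposal is correct and follows essentially the same route as the paper: reduce $X_w=Y_w$ to the dimension equality $\ell(w)=\dim_\C Y_w$, compute $\dim_\C Y_w$ via the dual cone $D_w(id)$ at the identity, and identify $|E_w(id)|$ with the number of simple reflections in the support of $w$. Your concern about adapting Lemma~\ref{lemm:R(w)\Dw} is unnecessary, since the paper already provides the graph-theoretic statement for arbitrary cones of this shape in Lemma~\ref{lem_Dw_simplicial_iff_smooth} and packages the smoothness at $idB$ in Corollary~\ref{cor_Duw_smooth_iff_Guw_forest}; invoking those removes the need for any ad hoc adaptation.
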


\begin{proof}
	We first claim that the following are equivalent.
		\begin{enumerate}
			\item The Schubert variety $X_w$ is a toric variety with respect to the $T$-action induced by the left multiplication.
			\item $\dim_{\C}X_w = \dim_{\C}Y_w$.
			\item $X_w = Y_w$.
		\end{enumerate}
It is obvious that the third statement implies the first statement. Moreover, 
if $\dim_{\C}X_w = \dim_{\C}Y_w$, then $X_w = Y_w$ since $X_w$ is irreducible and $X_w \supset Y_w$, so that the second statement implies the third statement. Finally, if $X_w$ is a toric variety with respect to the $T$-action induced by the left multiplication, then the dimension of the moment map image $\mu(X_w)$ equals to its complex dimension $\dim_{\mathbb{C}}X_w$. Since $\mu(Y_w) = \mu(X_w)$ by the Atiyah's convexity theorem~\cite[Theorem~2]{Atiyah82} and $Y_w$ is a toric variety, we have that 
\[
\dim_{\C} X_w = \dim_{\mathbb{R}}(\mu(X_w)) = \dim_{\mathbb{R}}(\mu(Y_w)) = \dim_{\C}Y_w.
\] 
Hence the first statement implies the second statement, and the claim is proved.
	
By the above claim, it is enough to show that $\dim_{\C} X_w = \dim_{\C} Y_w$ if and only if $w$ is a product of simple reflections to prove the corollary.
	As is well-known $\dim_\C X_w=\ell(w)$.  On the other hand, $\dim_\C Y_w$ is equal to the dimension of any  maximal cone in the fan of $Y_w$.  Since the fixed point $id B$ is smooth in $Y_w$, the maximal cone corresponding to $id B$ is simplicial and hence its dimension is equal to $|E_w(id)|$ by Proposition~\ref{prop_dual_Cuw}. Here 
	\[
	E_w(id)=\{ v\in \mathfrak{S}_n\mid \ell(v)=1,\, v\le w\}
	\]
	(see Remark~\ref{rema:E_u(w)}(\ref{Ew_id})) and the condition $\ell(v)=1$ and $v\le w$ is equivalent to $v$ being a simple reflection appearing in a reduced expression of $w$.  This implies the desired result since $\ell(w)$ is the number of simple reflections appearing in a reduced expression of $w$.     
\end{proof}

\begin{remark}
	A toric Schubert variety $X_w$ is a Bott manifold (the top space of a Bott tower in  \cite{GK94Bott}).  Indeed, since $w$ is a product of simple reflections, the Bruhat graph of $X_w$ is combinatorially same as the $1$-skeleton of a cube. This means that the underlying simplicial complex of the fan of $X_w$ is the boundary complex of a cross-polytope.  Therefore $X_w$ is a Bott manifold  (see~\cite[Corollary~3.5]{MP08semifree}).
\end{remark}

We propose the following conjecture.

\begin{conjecture}\label{conj_smoothness_of_Yw}
	The graph $\Gwu$ is a forest for any $u \le w$ if $\Gw$ is a forest.
	{\rm (}This is equivalent to saying that the generic torus orbit closure $\Yw$ is smooth if it is smooth at the fixed point $w B$.{\rm )} 
\end{conjecture}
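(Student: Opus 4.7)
I would attempt the conjecture by downward induction on $\ell(u)$, the base case $u=w$ being the hypothesis. For the inductive step, pick any cover $u \lessdot u_0 \le w$ in Bruhat order with $u_0 = t_{a,b}u$, $a>b$. By the inductive hypothesis $\Gw(u_0)$ is a forest, and the goal is to deduce the same for $\Gwu$ by showing that a hypothetical cycle in $\Gwu$ would produce a cycle in $\Gw(u_0)$. The main tool would be a transport map from edges of $\Gwu$ to subsets of edges of $\widetilde{E}_w(u_0)$, obtained by expressing each edge $(u(i),u(j)) \in \Refwu$ as a short chain of length-one Bruhat moves based at $u_0$: if $\{a,b\}\cap\{u(i),u(j)\}=\emptyset$, the pair transports verbatim; if one or both endpoints involve $a$ or $b$, the corresponding transposition has to be rewritten using moves incident to $u_0$, in the spirit of Lemma~\ref{lemm:6-2} and its decompositions \eqref{eq:6-0}--\eqref{eq:6-1}.

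Once each cycle edge is transported, I would pass from $\widetilde{E}_w(u_0)$ down to $E_w(u_0)$ via the indecomposability reduction; this replaces some pairs by sums of others but ought to preserve the homology class of the transported cycle in the underlying graph. One would then conclude that a cycle in $\Gwu$ gives rise to a nontrivial closed walk in $\Gw(u_0)$, contradicting the inductive hypothesis. As a sanity check, at the bottom $u=id$ the graph $\Gamma_w(id)$ is already a disjoint union of paths by Remark~\ref{rema:E_u(w)}(3), so the induction does reach all the way down to $\ell(u)=0$. Alternatively, one might attempt a direct pattern-avoidance approach: given a minimal cycle in $\Gwu$, use the letters $u(i),u(j)$ along the cycle together with the constraints $t_{u(i),u(j)}u \le w$ to extract a subsequence of $w$ realizing the pattern $4231$ or $45\bar{3}12$, contradicting the hypothesis via Proposition~\ref{prop:avoidence}.

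The hardest part, in either approach, is the interplay between the indecomposability reduction and the transport across covers: an indecomposable edge of $\Gwu$ need not remain indecomposable after transport to $\Gw(u_0)$, and conversely a pair that is absorbed into a longer chain may break a cycle at one level and restore it at another. From the polytope viewpoint the conjecture asserts that simplicity of the Bruhat interval polytope $Q_{id,w^{-1}}$ at the single vertex $\mu(wB)$ propagates to every vertex, a local-to-global simplicity property which fails for arbitrary polytopes. Any successful proof must therefore genuinely use the special combinatorial structure of $Q_{id,w^{-1}}$, most plausibly via the pattern-avoidance characterization of Proposition~\ref{prop:avoidence}, and this is where I expect the real difficulty to lie.
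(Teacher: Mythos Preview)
This statement is a \emph{conjecture} in the paper, not a theorem: the authors do not prove it. What the paper provides is supporting evidence only --- a worked example ($w=32154$), computer verification for $n\le 5$, and a strengthened numerical form (that $b_1(\Gamma_w(u))\le b_1(\Gamma_w(t_{a,b}u))$ for $(a,b)\in E_w(u)$ with $a<b$, suggesting non-simple vertices of $Q_{id,w^{-1}}$ are connected by edges through non-simple vertices). So there is no proof in the paper for your proposal to be compared against.

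As for your plan itself: the downward induction on $\ell(u)$ via a cover $u\lessdot u_0\le w$ is a natural strategy, and your transport idea is close in spirit to the paper's own stronger computer-observed inequality $b_1(\Gamma_w(u))\le b_1(\Gamma_w(u_0))$. But you correctly identify the genuine obstruction: the passage from $\widetilde{E}_w(u_0)$ to $E_w(u_0)$ via indecomposability is not functorial with respect to your edge transport, and a cycle in $\Gwu$ can collapse to a trivial closed walk in $\Gw(u_0)$ after reduction. Your alternative pattern-avoidance route would need to produce, from a cycle in $\Gwu$, a $4231$ or $45\bar{3}12$ pattern in $w$ itself (not in $u$), and the constraints $t_{u(i),u(j)}u\le w$ do not obviously pin down positions in $w$. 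Neither sketch closes to a proof, which is consistent with the statement's status as an open conjecture.
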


Here is an example which supports the conjecture.

\begin{example}
	Take $w = 32154$. Then $\Refw=\{(3,2),(2,1),(5,4)\}$. Using simple transpositions $s_i$ 
	for $1 \leq i \leq 4$, 
	we have that $w = s_1s_2s_1s_4$. 
	There are twelve elements $u \in \frak{S}_5$ such that
	$u \leq w$, and we have Table~\ref{table2} for $\Refwu$. We can see that each $\Gwu$ is a forest for $u \leq w$. 
	{
		\begin{table}[H]
			\begin{tabular}{|c|c||c|c|} 
				\hline
				$u$ & $\Refwu$ & $u$ & $\Refwu$ \cr \hline
				$s_1s_2s_1s_4 = 32154$ & $(3,2),(2,1),(5,4)$ &
				$s_1s_2s_1= 32145$ & $(3,2),(2,1),(4,5)$ \cr \hline
				$s_4s_2s_1 = 31254$ & $(3,1),(1,2),(5,4)$ &
				$s_4s_1s_2= 23154$ & $(2,3),(3,1),(5,4)$ \cr \hline
				$s_2s_1 = 31245$ & $(3,1),(1,2),(4,5)$ &
				$s_1s_2= 23145$ & $(2,3),(3,1),(4,5)$ \cr \hline
				$s_4s_1 = 21354$ & $(2,1),(1,3),(5,4)$ &
				$s_4s_2= 13254$ & $(1,3),(3,2),(5,4)$ \cr \hline
				$s_1 = 21345$ & $(2,1),(1,3),(4,5)$ &
				$s_2= 13245$ & $(1,3),(3,2),(4,5)$ \cr \hline
				$s_4 = 12354$ & $(1,2),(2,3),(5,4)$ &
				$e= 12345$ & $(1,2),(2,3),(4,5)$ \cr \hline
			\end{tabular} \vspace{1.25ex}
			\caption{$\Refwu$ for $u \leq w$ when $w = 32154$.}
			\label{table2}
	\end{table}}
\end{example}

One can check Conjecture~\ref{conj_smoothness_of_Yw} for $n\le 5$ with the aid of computers.  Indeed, the computer calculation  suggests that $b_1(\Gamma_w(u))\le b_1(\Gamma_w)$ for any $u\le w$, where $b_1(\Gamma)$ denotes the first Betti number of a graph $\Gamma$.  More strongly, it suggests that $b_1(\Gamma_w(u))\le b_1(\Gamma_w(t_{a,b}u))$ for $(a,b)\in E_w(u)$ with $a<b$.  The latter implies that any two non-simple vertices in the Bruhat interval polytope $Q_{id,w^{-1}}$ (see Remark~\ref{rmk_moment_map} for the definition of $Q_{id,w^{-1}}$) would be joined by edges with non-simple vertices as endpoints (see Figures~\ref{figure_Bruhat_polytope_4231} and~\ref{figure_Bruhat_polytope_3412} in \ref{section_retraction_sequence_poincare_polynomial}).  

\section{Poincar\'e polynomial of $\Yw$} \label{sect:7}

The Eulerian number $A(n,k)$ is the number of permutations in $\mathfrak{S}_n$ with $k$ ascents and the Eulerian polynomial $A_n(t)$ is the generating function of the Eulerian numbers, i.e., $A_n(t)=\sum_{k=0}^{n-1}A(n,k)t^k$.  As is well-known, the Poincar\'e polynomial of the permutohedral variety $\Ywo$ agrees with $A_n(t^2)$ (see, for example, \cite{Klyachko85Orbits}, \cite{Klyachko95toric}, and \cite[\S 4.1]{PRW08_faces}).  
In this section, we consider a generalization of Eulerian numbers, introduce a polynomial $A_w(t)$ for each $w\in \mathfrak{S}_n$, and show that the Poincar\'e polynomial of $\Yw$ is given by $A_w(t^2)$ when $\Yw$ is smooth.

We set
\[
\begin{split}
&\awu:=\#\{ (u(i),u(j))\in \Refwu\mid u(i)<u(j)\}\quad\text{for $u\le w$},\\
&A_w(n,k):=\#\{ u\le w\mid \awu=k\}\quad \text{for $k\ge 0$},
\end{split}
\]
and define
\[
A_w(t):=\sum_{u\le w}t^{\awu}=\sum_{k\ge 0}A_w(n,k)t^k.
\]

\begin{example} \label{exam:Aw}
	Using Figure~\ref{fig_Bruhat_order} and Table~\ref{table_Refwu_and_dwu}, one can find 
	\[
	\begin{split}
	&{A}_w(t)=t^3+11t^2+7t+1 \quad \text{ when }w=4231, \\
	&{A}_w(t)=t^3+7t^2+5t+1  \quad \text{ when }w=3412. 
	\end{split}
	\]
	In these cases, $A_w(t)$ are not palindromic and $\Yw$ are not smooth.  On the other hand, from Table~\ref{table2} one can easily find 
	\[
	\text{$A_w(t)=t^3+5t^2+5t+1$ \quad when $w=32154$.}
	\]  
	In this case, $A_w(t)$ is palindromic and $\Yw$ is smooth. 
	\begin{table}
		\begin{subtable}[b]{.5\textwidth}
			\small
			\centering
			\begin{tabular}[h]{|c|c|c|}
				\hline
				$u$ & $\Refwu$ & $\awu$ \\
				\hline 
				$4231$ & $(4,2), (4,3), (2,1), (3,1)$ & $0$ \\
				\hline
				$4132$ & $(4,1), (4,3), \textcolor{red}{(1,2)}, (3,2)$ & $1$\\
				\hline
				$4213$ & $(4,2), (2,1), \textcolor{red}{(1,3)}$ & $1$ \\
				\hline
				$2431$ & $\textcolor{red}{(2,4)}, (4,3), (3,1)$ & $1$ \\
				\hline
				$3241$ & $(3,2), \textcolor{red}{(3,4)}, (2,1), (4,1)$ & $1$\\
				\hline
				$1432$ & $\textcolor{red}{(1,4)}, (4,3), (3,2)$ & $1$ \\
				\hline 
				$4123$ & $(4,1), \textcolor{red}{(1,2), (2,3)}$ & $2$ \\
				\hline
				$2413$ & $\textcolor{red}{(2,4)}, (4,1), \textcolor{red}{(1,3)}$ & $2$\\
				\hline
				$3142$ & $(3,1), \textcolor{red}{(3,4), (1,2)}, (4,2)$ & $2$ \\
				\hline 
				$3214$ & $ (3,2), (2,1), \textcolor{red}{(1,4)}$ & $1$ \\
				\hline 
				$2341$ & $\textcolor{red}{(2,3), (3,4)}, (4,1)$ & $2$ \\
				\hline
				$1423$ & $\textcolor{red}{(1,4)}, (4,2), \textcolor{red}{(2,3)}$ & $2$ \\
				\hline 
				$1342$ & $\textcolor{red}{(1,3), (3,4)}, (4,2)$ & $2$ \\
				\hline 
				$2143$ & $(2,1), \textcolor{red}{(1,4)}, (4,3)$ & $1$ \\
				\hline 
				$3124$ & $(3,1),  \textcolor{red}{(1,2), (2,4)}$ & $2$ \\
				\hline 
				$2314$ & $\textcolor{red}{(1,4), (2,3)}, (3,1)$ & $2$ \\
				\hline 
				$1243$ & $\textcolor{red}{(1,2), (2,4)}, (4,3)$ & $2$ \\
				\hline 
				$1324$ & $\textcolor{red}{(1,3)}, (3,2), \textcolor{red}{(2,4)}$ & $2$ \\
				\hline 
				$2134$ & $(2,1), \textcolor{red}{(1,3), (3,4)}$ & $2$ \\
				\hline 
				$1234$ & $\textcolor{red}{(1,2), (2,3), (3,4)}$ & $3$ \\
				\hline
			\end{tabular}
			\caption{\normalsize $w = 4231$.}
			\label{table_Ewu_4231}
		\end{subtable}\hspace{1em}~
		\begin{subtable}[b]{0.5\textwidth}
			\small
			\centering
			\begin{tabular}{|c|c|c|}
				\hline
				$u$ & $\Refwu$ & $\awu$ \\
				\hline
				$3412$ & $(3,1), (3,2), (4,1), (4,2)$ & $0$\\
				\hline
				$1432$ & $\textcolor{red}{(1,3)}, (4,3), (3,2)$ & $1$ \\
				\hline
				$2413$ & $ (2,1), \textcolor{red}{(2,3)},  (4,1), (4,3)$ & $1$ \\
				\hline
				$3142$ & $ (3,1), \textcolor{red}{(1,4)}, (4,2)$ & $1$ \\
				\hline
				$3214$ & $ (3,2), (2,1), \textcolor{red}{(2,4)}$ & $1$ \\
				\hline 
				$1423$ & $\textcolor{red}{(1,2)},  (4,2), \textcolor{red}{(2,3)}$ & $2$ \\
				\hline
				$1342$ & $\textcolor{red}{(1,3), (3,4)}, (4,2)$ & $2$  \\ 
				\hline
				$2143$ & $ (2,1), \textcolor{red}{(1,4)},  (4,3)$ & $1$ \\
				\hline
				$3124$ & $ (3,1), \textcolor{red}{(1,2), (2,4)}$ & $2$ \\
				\hline
				$2314$ & $\textcolor{red}{(2,3)}, (3,1), \textcolor{red}{(3,4)}$ &$2$ \\
				\hline
				$1243$ & $\textcolor{red}{(1,2), (2,4)},  (4,3)$ & $2$ \\
				\hline
				$1324$ & $\textcolor{red}{(1,3)},  (3,2), \textcolor{red}{(2,4)}$ & $2$ \\
				\hline
				$2134$ & $ (2,1), \textcolor{red}{(1,3), (3,4)}$ & $2$ \\
				\hline
				$1234$ & $\textcolor{red}{(1,2),(2,3),(3,4)}$ & $3$ \\ \hline
			\end{tabular}
			\caption{\normalsize $w = 3412$.}
			\label{table_Ewu_3412}
		\end{subtable}
		\caption{$\Refwu$ and $\awu$.}
		\label{table_Refwu_and_dwu}
	\end{table}
\end{example}
\begin{remark}
	Looking at descents
	\[ \dwu:= \# \{ (u(i),u(j))\in E_w(u)\mid u(i)>u(j)\} \quad\text{for any $u\le w$}, \]
	one can define a polynomial 
	\[ \bar{A}_w(t) := \sum_{u\le w} t^{\dwu}.\]
	When $Y_w$ is smooth, we have $\bar{A}_w(t)=A_w(t)$ but otherwise they may differ.  Indeed
	\[
	\begin{split}
	\bar{A}_w(t)&=t^4+2t^3+6t^2+10t+1 \quad\text{when $w=4231$},\\
	\bar{A}_w(t)&=t^4+t^3+4t^2+7t+1 \ \quad\text{when $w=3412$}.
	\end{split}
	\]
	
\end{remark}

As mentioned in Remark~\ref{rema:E_u(w)},
\[
\Refwou=\{ (u(i), u(i+1))\mid i=1,2,\dots,n-1\},
\]
so $\awuo$ is the number of ascents in $u$ and hence $A_{w_0}(n,k)$ is the number of permutations in $\mathfrak{S}_n$ with $k$ ascents, which is the Eulerian number $A(n,k)$. Therefore, $A_{w_0}(t)$ is the Eulerian polynomial $A_n(t)$. As is well-known, the Poincar\'e polynomial of the permutohedral variety $\Ywo$ is given by $A_n(t^2)=A_{w_0}(t^2)$. The following theorem generalizes this fact. 

\begin{theorem} \label{theo:Poincare}
	If $\Yw$ is smooth, then its Poincar\'e polynomial agrees with $A_w(t^2)$ and hence $A_w(t)$ is palindromic and unimodal. 
\end{theorem}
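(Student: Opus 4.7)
The plan is to realize $\Yw$ as a smooth projective toric variety (smooth by hypothesis, projective as a closed subvariety of $G/B$) and apply the standard Morse-theoretic computation of Betti numbers via the moment polytope. The image of the moment map restricted to $\Yw$ is the Bruhat interval polytope $Q_{id,w^{-1}}$, whose vertices are $\mu(uB)=(u^{-1}(1),\dots,u^{-1}(n))$ for $u\le w$, and by Corollary~\ref{cor_Duw_smooth_iff_Guw_forest} together with smoothness of $\Yw$, every $\Dwu$ is simplicial, so $Q_{id,w^{-1}}$ is a simple polytope of dimension $d=\dim_{\C}\Yw$.

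First I would identify the edges of $Q_{id,w^{-1}}$ at each vertex $\mu(uB)$. By Proposition~\ref{prop_dual_Cuw} combined with smoothness, the $d$ rays of $\Dwu$ are precisely $\{e_b-e_a\mid (a,b)\in\Refwu\}$, and each such ray corresponds to the $T$-stable $\C P^1\subset \Yw$ joining $uB$ and $t_{a,b}u\cdot B$ (note $t_{a,b}u\le w$ by the definition of $\wideRefwu$). Writing $u(i)=a,u(j)=b$ with $i<j$ (which is forced since $(a,b)\in\Refwu$ means $a$ appears ahead of $b$ in the one-line notation of $u$), one computes directly
\[
\mu(t_{a,b}u\cdot B)-\mu(uB)=(j-i)(e_a-e_b),
\]
so the edge at $\mu(uB)$ indexed by $(a,b)$ has direction $e_a-e_b$.

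Next I would apply Morse theory. Choose a generic $\phi=(\phi_1,\dots,\phi_n)\in\R^n$ with $\phi_1<\phi_2<\cdots<\phi_n$; then $h_\phi:=\langle\phi,\mu(\cdot)\rangle\colon\Yw\to\R$ is a perfect Morse function whose critical points are the $uB$ for $u\le w$. The standard toric-variety Morse theory (equivalently, the shelling interpretation of the $h$-vector of a simple polytope) yields
\[
b_{2k}(\Yw)=\#\{u\le w\mid N^-(u)=k\},
\]
where $N^-(u)$ is the number of edges at $\mu(uB)$ along which $h_\phi$ decreases. An edge with direction $e_a-e_b$ is decreasing iff $\phi_a<\phi_b$, i.e.\ iff $a<b$, i.e.\ iff $u(i)<u(j)$. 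Hence $N^-(u)=\awu$, and summing gives the Poincar\'e polynomial
\[
\sum_k b_{2k}(\Yw)\,t^{2k}=\sum_{u\le w}t^{2\awu}=A_w(t^2),
\]
as desired (odd Betti numbers vanish since $\Yw$ is a smooth projective toric variety).

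Finally, palindromicity $A_w(t)=t^dA_w(1/t)$ is immediate from Poincar\'e duality $b_{2k}(\Yw)=b_{2d-2k}(\Yw)$, and unimodality follows from the Hard Lefschetz theorem applied to $\Yw$, which provides injections $H^{2k}(\Yw)\hookrightarrow H^{2k+2}(\Yw)$ for $2k+2\le d$. The main technical point will be the careful justification of the bijection between polytope edges at $\mu(uB)$ and elements of $\Refwu$ with the stated direction formula; this is precisely where the smoothness hypothesis enters, since otherwise $|\Refwu|$ could exceed $d$ and $Q_{id,w^{-1}}$ would fail to be simple at that vertex (which is exactly what happens in the singular examples $w=4231,3412$ discussed in the excerpt).
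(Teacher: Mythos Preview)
Your proposal is correct and essentially matches the paper's approach. The paper's primary proof reads the tangent weights $e_{u(j)}-e_{u(i)}$ directly from the dual cone $\Dwu$ (via Proposition~\ref{prop_dual_Cuw}) and counts \emph{positive} weights for the $\C^*$-action through $\lambda^{\mathbf a}$ with $a_1<\cdots<a_n$, invoking the Bia\l{}ynicki--Birula decomposition; this gives $a_w(u)$ immediately without computing polytope edge directions. Your route via the moment polytope, the explicit edge-direction formula $\mu(t_{a,b}uB)-\mu(uB)=(j-i)(e_a-e_b)$, and the $h$-vector/shelling count of down-edges is exactly the symplectic reinterpretation the paper spells out in the paragraph following its proof, so the two arguments differ only in packaging.
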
 
\begin{remark}\label{remark_poincare_polynomial_of_4231_3412}
	When $w=4231$ or $3412$, $Y_w$ is singular and $A_w(t)$ is not palindromic but one can see that $A_w(t^2)$ still agrees with the Poincar\'e polynomial of $Y_w$ (see~\ref{section_retraction_sequence_poincare_polynomial}).  
\end{remark}

\begin{proof}[Proof of Theorem~\ref{theo:Poincare}]

	The maximal cone $\Cbwu$ in the fan of $\Yw$ corresponds to the fixed point $uB$ and its dual cone $\Dwu$ is spanned by $e_{u(j)}-e_{u(i)}$'s for $(u(i),u(j))\in \Refwu$. Suppose that $\Yw$ is smooth. Then $e_{u(j)}-e_{u(i)}$'s for $(u(i),u(j))\in \Refwu$ are primitive inward-pointing edge vectors from the vertex $\mu(u)$ of the moment map image $\mu(Y_w)$. We choose an element $\a=[a_1,\dots,a_n]\in \UZ$ such that $a_1<a_2<\dots<a_n$. 
	Then we have that
	\[
	\langle e_{u(j)}-e_{u(i)},\a\rangle=a_{u(j)}-a_{u(i)} > 0 \iff u(j) > u(i).
	\]	
	Since $\awu$ counts the number of pairs $(u(i),u(j)) \in \Refwu$ such that $u(j) > u(i)$ and the vertices of the moment polytope are given by $\{\mu(u) \mid u \leq w\}$, the Poincar\'e polynomial of $\Yw$ is given by $\sum_{u\le w}t^{2 \awu} =A_w(t^2)$
(see \cite[Theorem~B.I.3.6]{ACL03_symplectic} for more details on computing Poincar\'{e} polynomials of symplectic toric manifolds).

	Since $Y_w$ is smooth and projective, the Poincar\'e duality and the hard Lefschetz theorem imply the palindromicity and unimodality of the Poincar\'e polynomial of~$Y_w$, proving the theorem.   
\end{proof}

Now, since $\Yw$ is smooth if and only if $\Gwu$ is a forest for any $u\le w$ (Corollary~\ref{cor_Duw_smooth_iff_Guw_forest}), we obtain the following corollary from Theorem~\ref{theo:Poincare}. 

\begin{corollary}
	The polynomial $A_w(t)$ is palindromic if the graph $\Gwu$ is a forest for any $u\le w$.
\end{corollary}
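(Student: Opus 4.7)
The plan is to deduce this directly from the two main results already established in the paper. First I would invoke Corollary~\ref{cor_Duw_smooth_iff_Guw_forest}: the hypothesis that $\Gwu$ is a forest for every $u \le w$ is exactly the condition that $\Yw$ is smooth at each $T$-fixed point $uB$. Since the fan of $\Yw$ is covered by its maximal cones, and each maximal cone corresponds to a fixed point $uB$ (by Corollary~\ref{coro:maximal_cone}), smoothness of $\Yw$ at every fixed point is equivalent to smoothness of the associated toric variety $\Yw$ globally.

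Once smoothness of $\Yw$ is in hand, Theorem~\ref{theo:Poincare} applies and yields that the Poincar\'e polynomial of $\Yw$ equals $A_w(t^2)$. The theorem further asserts palindromicity (and unimodality) of $A_w(t)$ as an immediate consequence: palindromicity follows from Poincar\'e duality on the smooth projective variety $\Yw$, and unimodality from the hard Lefschetz theorem. Thus the corollary follows by simply stitching together Corollary~\ref{cor_Duw_smooth_iff_Guw_forest} and Theorem~\ref{theo:Poincare}; there is no real obstacle, since all the substantive work (the forest criterion for smoothness at each fixed point and the Morse-theoretic computation of the Poincar\'e polynomial) has already been carried out.
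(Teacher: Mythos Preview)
Your proposal is correct and follows exactly the same route as the paper: the sentence preceding the corollary explicitly states that $\Yw$ is smooth if and only if $\Gwu$ is a forest for every $u\le w$ (Corollary~\ref{cor_Duw_smooth_iff_Guw_forest}), and then invokes Theorem~\ref{theo:Poincare} to conclude palindromicity of $A_w(t)$. The only addition you make is the explicit remark that smoothness at every fixed point gives global smoothness of the toric variety, which is implicit in the paper.
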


We conclude with remarks and questions about the criterion of smoothness of $\Yw$.  
The polytope $\mu(\Yw)$ is the Bruhat interval polytope $Q_{id,w^{-1}}$ in \cite{ts-wi15} for any $w\in \mathfrak{S}_n$, where $id$ denotes the identity permutation, and the vectors $e_{u(j)}-e_{u(i)}$'s for $(u(i),u(j))\in \Refwu$ are the edge vectors of the polytope emanating from the vertex $\mu(uB)$. Note that the $1$-skeleton of the polytope $Q_{id,w^{-1}}$ is the Bruhat graph of $Y_w$ (where the label of the vertex $\mu(uB)$ in the Bruhat graph is $u$ although $\mu(uB)=(u^{-1}(1), \dots, u^{-1}(n))$). Therefore the following are equivalent:
\begin{enumerate}
	\item $\Yw$ is smooth. \label{equi_con_1}
	\item The Bruhat interval polytope $Q_{id,w^{-1}}$ is simple, equivalently the Bruhat graph of $Y_w$ is regular. \label{equi_con_2}
	\item The graph $\Gwu$ is a forest for any $u\le w$. \label{equi_con_3}
\end{enumerate}
Conjecture~\ref{conj_smoothness_of_Yw} says that the following \ref{equi_con_4} would imply \ref{equi_con_3} above:
\begin{enumerate}
	\setcounter{enumi}{3}
	\item The graph $\Gw=\G_w(w)$ is a forest. \label{equi_con_4}
\end{enumerate}
Therefore, if the conjecture is true, then all four statements above are equivalent.  

It is known that the Schubert variety $\Xw$ is smooth if and only if its Poincar\'e polynomial is palindromic (see \cite[Theorem~6.0.4, Corollary~6.1.13, Theorem~6.2.4]{BL20Singular}).  We may ask whether the same holds for $\Yw$, in other words, whether the following~\ref{equi_con_5} is equivalent to \ref{equi_con_1} above:
\begin{enumerate}
	\setcounter{enumi}{4}
	\item The Poincar\'e polynomial of $\Yw$ is palindromic.  \label{equi_con_5}
\end{enumerate}

If $\Yw$ is smooth, then $A_w(t)$ is palindromic (Theorem~\ref{theo:Poincare}) but otherwise $A_w(t)$ may not be palindromic. As in Remark~\ref{remark_poincare_polynomial_of_4231_3412}, $A_w(t^2)$ might agree with the Poincar\'e polynomial of $Y_w$ for any $w\in \mathfrak{S}_n$.  Related to~\ref{equi_con_5}, we may ask whether the following~\ref{equi_con_6} is equivalent to~\ref{equi_con_1} above:
\begin{enumerate}
		\setcounter{enumi}{5}
	\item $A_w(t)$ is palindromic. \label{equi_con_6}
\end{enumerate}
When $n\le 3$, $\Yw$ is smooth for all $w$.  When $n=4$, $\Yw$ is smooth if and only if $w$ is different from $4231$ and $3412$.  As is in Example~\ref{exam:Aw}, $A_w(t)$ is not palindromic when $w=4231$ or $3412$.  Therefore \ref{equi_con_1} and \ref{equi_con_6} above are equivalent for $n\le 4$.

\appendix
\section{Acyclicity of graph and pattern avoidance of permutation}
\label{section_acyclicity_and_avoidance}
In the appendix, we will give a proof of Proposition~\ref{prop:avoidence} which is different from the proof in \cite{bm-bu07}.
If $(a,b)$ is an inversion in $w$, namely $a>b$ and $w^{-1}(a)<w^{-1}(b)$, then there is a sequence of pairs $(a_i,b_i)\in \Refw$ $(i=1,\dots,k)$ such that 
\[
a=a_1> b_1=a_{2}> b_2=a_3> \dots > b_{k-1}=a_k> b_k=b.
\]
This produces a path from $a$ to $b$ in $\Gw$ and we call such a path a \emph{descending path} from $a$ to $b$. 
There is a descending path from $a$ to $b$ if and only if $(a,b)$ is an inversion of $w$. We denote by $\Inv(w)$ the set of inversions of $w$.

\begin{proof}[Proof of Proposition~\ref{prop:avoidence}]
	The proposition is equivalent to the statement that $w$ has a pattern $4231$ or $4512$, which is not a subsequence of $45312$ if and only if $\Gw$ has a cycle, and we shall prove this equivalent statement. 
	
	The only if part $(\Longrightarrow)$. 
	{\bf Case 1}. The case where $w$ has the pattern $4231$ (see Figure~\ref{fig1}). In this case there are $a,b,c,d\in [n]$ such that 
	\begin{equation*} \label{eq:4231}
	w^{-1}(a)<w^{-1}(b)<w^{-1}(c)<w^{-1}(d)\qquad\text{and}\qquad a>c>b>d.
	\end{equation*}
	Since $(a,b), (b,d)\in \Inv(w)$, there is a descending path $P$ in $\Gw$ from $a$ to $d$ via $b$. Note that $P$ does not contain $c$ because $b\in P$ and $(b,c)\notin \Inv(w)$. Similarly, since $(a,c), (c,d)\in \Inv(w)$, there is a descending path $Q$ in $\Gw$ from $a$ to $d$ via $c$ and $Q$ does not contain $b$ because $c\in Q$ and $(b,c)\notin\Inv(w)$. Therefore the union $P\cup Q$ contains a cycle. 
	
	\smallskip
	{\bf Case 2}. The case where $w$ has the pattern $4512$ which is not a subsequence of $45312$ (see~Figure~\ref{fig2}). The argument is similar to Case 1 but rather subtle. In this case there are $a,b,c,d\in [n]$ such that 
	\begin{equation} \label{eq:3412}
	w^{-1}(a)<w^{-1}(b)<w^{-1}(c)<w^{-1}(d)\qquad\text{and}\qquad b>a>d>c
	\end{equation} 
	and we may assume that 
	\begin{equation} \label{eq:badc}
	\begin{split}
	&\text{there is no $p$ such that $a>p>d$ and $w^{-1}(a)<w^{-1}(p)<w^{-1}(d)$, and}\\
	&\text{there is no $q$ such that $b>q>c$ and $w^{-1}(b)<w^{-1}(q)<w^{-1}(c)$}.
	\end{split}
	\end{equation}
	Since $(a,d)\in \Inv(w)$, there is a descending path $P_1$ from $a$ to $d$ and since $(a,c)\in \Inv(w)$, there is a descending path $P_2$ from $a$ to $c$. Both $P_1$ and $P_2$ do not contain the vertex $b$ because $b>a$. Note that 
	\[
	P_1\cap P_2=\{a\},\ \text{so $P_1\cup P_2$ is a path which joins $c$ and $d$ and does not contain $b$}. 
	\]
	
	\begin{figure}[tb]
		\begin{center}
			\begin{subfigure}[t]{0.5\textwidth}
				\centering
				\tikzstyle{vertex} = [circle,draw, fill=white!20, inner sep=0pt, minimum size = 4mm]
				\begin{tikzpicture}[scale = 0.6]
				\draw[->, gray, very thin] (0,0) -- (5,0) node[very near end, sloped, below] {position};
				\draw[->, gray, very thin] (0,0) -- (0,5) node[very near end, sloped, above] {value};
				
				\node (a) [vertex] at (1,4) {$a$};
				\node (b) [vertex] at (2,2) {$b$};
				\node (c) [vertex] at (3,3) {$c$};
				\node (d) [vertex] at (4,1) {$d$};

				\draw[->-=.5, red] (a) -- node[midway, below, sloped] {\small$P$} (b);
				\draw[->-=.5, red] (b) -- (d);
				\draw[->-=.5, blue] (a) -- node[midway, above, sloped] {\small$Q$} (c);
				\draw[->-=.5, blue] (c) -- (d);
				\end{tikzpicture}
				\caption{$w$ has pattern $4231$.}
				\label{fig1}
			\end{subfigure}%
			\begin{subfigure}[t]{0.5\textwidth}
				\centering
				\tikzstyle{vertex} = [circle,draw, fill=white!20, inner sep=0pt, minimum size = 4mm]
				\begin{tikzpicture}[scale = 0.6]
				\draw[->, gray, very thin] (0,0) -- (5,0) node[very near end, sloped, below] {position};
				\draw[->, gray, very thin] (0,0) -- (0,5) node[very near end, sloped, above] {value};
				
				\node (a) [vertex] at (1,3) {$a$};
				\node (b) [vertex] at (2,4) {$b$};
				\node (c) [vertex] at (3,1) {$c$};
				\node (d) [vertex] at (4,2) {$d$};
				
				\draw[->-=.5, red] (a) to node[auto, swap, sloped] {\small$P_2$} (c);
				\draw[->-=.7, blue] (a) -- node[very near end, sloped, below] {\small$P_1$} (d);
				\draw[->-=.5, purple] (b) to node[midway, sloped, above] {\small$Q_2$} (d);
				\draw[->-=.7, orange!70!red] (b) -- node[very near start, sloped, below] {\small$Q_1$} (c);
				
				\end{tikzpicture}
				\caption{$w$ has pattern $4512$ which is not a subsequence of $45312$.}
				\label{fig2}
			\end{subfigure}
			\caption{Graph $\Gw$.}
		\end{center}
	\end{figure}
	Indeed, the vertex $a$ is an end point of $P_1$ and $P_2$ and if $P_1$ and $P_2$ has another common vertex, say $p$, then $a>p>d$ and $w^{-1}(a)<w^{-1}(p)<w^{-1}(c)$ but this contradicts \eqref{eq:badc} because $w^{-1}(c)<w^{-1}(d)$ by \eqref{eq:3412}. 
	Similarly, since $(b,c)\in \Inv(w)$, there is a descending path $Q_1$ from $b$ to $c$, and since $(b,d)\in \Inv(w)$, there is a descending path $Q_2$ from $b$ to $d$. Both $Q_1$ and $Q_2$ do not contain the vertex $a$ because they are descending paths and $w^{-1}(a)<w^{-1}(b)$. Note also that 
	\[
	Q_1\cap Q_2=\{b\},\ \text{so $Q_1\cup Q_2$ is a path which joins $c$ and $d$ and does not contain $a$}. 
	\]
	Indeed, the vertex $b$ is an end point of $Q_1$ and $Q_2$ and if $Q_1$ and $Q_2$ have another common vertex, say $q$, then $b>q>d$ and $w^{-1}(b)<w^{-1}(q)<w^{-1}(c)$ but this contradicts \eqref{eq:badc} because $d>c$ by \eqref{eq:3412}. 
	
	Both $P:=P_1\cup P_2$ and $Q:=Q_1\cup Q_2$ are \emph{paths} joining $c$ and $d$, and $a\in P$ but $a\notin Q$. 
	Therefore, the union $P\cup Q$ contains a cycle. 
	
	\medskip
	The if part $(\Longleftarrow)$. Suppose that $\Gw$ contains a cycle $S$. We may assume that $S$ is a circle. 
	
	\smallskip
	\noindent
	{\bf Claim.} The cycle $S$ contains at least four vertices. 
	
	\smallskip
	
	\noindent
	Indeed, if it has only three vertices, say $x,y,z$ and $w^{-1}(x)<w^{-1}(y)<w^{-1}(z)$, then since $S$ is a triangle, we must have $(x,y), (y,z), (x,z)\in \Refw$ (Figure~\ref{fig3}) and the belonging of the first two elements to $\Refw$ implies $x>y>z$ but this contradicts $(x,z)$ being in $\Refw$. Therefore the claim follows. 
	
	\smallskip
	Let $a$ (resp. $d$) be the vertex of $S$ such that $w^{-1}(a)\le w^{-1}(x)$ (resp. $w^{-1}(x)\le w^{-1}(d)$) for all vertices $x$ of $S$, in other words, 
	\[
	\begin{split}
	&\text{$a$ is leftmost while $d$ is rightmost among vertices of $S$}\\
	&\text{in the one-line notation for $w$.}
	\end{split}
	\]
	We call the interval $[w^{-1}(a),w^{-1}(d)]$ the \emph{width} of $S$. We may assume that the width of $S$ is minimal, i.e., there is no cycle $S'$ in $\Gw$ such that the width of $S'$ is properly contained in the width of $S$. 
	We say that a descending path $P$ from $x$ to $y$ is \emph{emanating} from $x$ or is \emph{terminating} at $y$. Note that since $a$ (resp. $d$) is the leftmost (resp. rightmost) vertex of $S$, the two edges at the vertex $a$ (resp. $d$) in $S$ are emanating from $a$ (resp. terminating at $d$). 
	
	We consider two cases. 
	
	\smallskip
	\noindent
	{\bf Case I}. The case where $a$ and $d$ are joined by an edge, i.e., $(a,d)\in \Refw$. In this case we will find vertices $b$ and $c$ of $S$ such that $abcd$ is the pattern $4512$ which is not a subsequence of $45312$. 
	
	Since $(a,d)\in\Refw$, we have 
	\begin{equation} \label{eq:ad}
	a>d.
	\end{equation}
	
	We note that the path from $a$ to $d$ (outside of the edge $(a,d)$) is not a descending path because if so, any vertex $x$ in the middle of the path (Figure~\ref{fig4}) satisfies $w^{-1}(a)<w^{-1}(x)<w^{-1}(d)$ and $a>x>d$ but this contradicts $(a,d)$ being in $\Refw$. Therefore, 
	there is a vertex $q\ (q\not=a, d)$ of $S$ such that the descending path from $a$ to $q$ is maximal, which means that if $q'$ is the vertex of $S$ next to $q$ but outside of the descending path, then $(q',q)\in \Refw$ (Figure~\ref{fig5q}). We denote this maximal descending path by 
	\begin{equation} \label{eq:Paq}
	P(a,q). 
	\end{equation}
	Similarly, there is a vertex $p\ (\not=a, d,q)$ of $S$ such that the path
	\begin{equation} \label{eq:Ppd}
	\text{$P(p,d)$}
	\end{equation}
	from $p$ to $d$ which does not contain $a$ is a maximal descending path terminating at $d$, which means that if $p'$ is the vertex of $S$ next to $p$ and outside of $P(p,d)$, then $(p,p')\in \Refw$ (Figure~\ref{fig5p}). 
	\begin{figure}[tb]
		\begin{center}
			\begin{minipage}[t]{0.25\textwidth}
				\centering
				\tikzstyle{vertex} = [circle,draw, fill=white!20, inner sep=0pt, minimum size = 4mm]
				\begin{tikzpicture}[scale = 0.5]
				
				\node (x) [vertex] at (1,4) {$x$};
				\node (y) [vertex] at (3,3) {$y$};
				\node (z) [vertex] at (4,1) {$z$};

				\draw[->-=.5] (x) -- (y);
				\draw[->-=.5] (x) -- (z);
				\draw[->-=.5] (y) -- (z);
				\end{tikzpicture}
				\caption{}
				\label{fig3}
			\end{minipage}%
			\begin{minipage}[t]{0.25\textwidth}
				\centering
				\tikzstyle{vertex} = [circle,draw, fill=white!20, inner sep=0pt, minimum size = 4mm]
				\begin{tikzpicture}[scale = 0.5]
				
				\node(a) [vertex] at (1,6) {$a$};
				\node(ax) [circle, fill=black, inner sep = 1pt] at (2,4) {};
				\node(x) [vertex] at (3,3) {$x$};
				\node(xd) [circle, fill=black, inner sep = 1pt] at (4,2) {};
				\node (d) [vertex] at (6,1) {$d$};
				
				\draw[->-=.5] (a)--(ax);
				\draw[->-=.5] (ax)--(x);
				\draw[->-=.5] (x)--(xd);
				\draw[->-=.5] (xd)--(d);
				\draw[->-=.5] (a)--(d);

				\end{tikzpicture}
				\caption{}
				\label{fig4}
			\end{minipage}%
			\begin{minipage}[t]{0.25\textwidth}
				\centering
				\tikzstyle{vertex} = [circle,draw, fill=white!20, inner sep=0pt, minimum size = 4mm]
				\begin{tikzpicture}[scale = 0.5]
				
				\node (a) [vertex] at (1,4) {$a$};
				\node (q') [vertex] at (3,3) {$q'$};
				\node (q) [vertex] at (4,1) {$q$};

				\draw[->-=.5, bend right, red] (a) to node[midway, below, sloped] {\small$P(a,q)$} (q);
				\draw[->-=.5] (q') -- (q);
				\end{tikzpicture}
				\caption{}
				\label{fig5q}
			\end{minipage}%
			\begin{minipage}[t]{0.25\textwidth}
				\centering
				\tikzstyle{vertex} = [circle,draw, fill=white!20, inner sep=0pt, minimum size = 4mm]
				\begin{tikzpicture}[scale = 0.5]
				
				\node (a) [vertex] at (1,3) {$a$};
				\node (p) [vertex] at (2,4) {$p$};
				\node (p') [vertex] at (3,2) {$p'$};
				\node (d) [vertex] at (4,1) {$d$};
				
				\draw[->-=.5, bend right] (a) to (d);
				\draw[->-=.5] (p)--(p');
				\draw[->-=.5, bend left, red] (p) to node[midway, above, sloped] {\small$P(p,d)$} (d);
				
				\end{tikzpicture}
				\caption{}
				\label{fig5p}
			\end{minipage}
		\end{center}
	\end{figure}
	It follows that 
	\begin{equation} \label{eq:pq}
	\begin{split}
	&a>q, \quad p>d\\
	& p>p',\quad w^{-1}(p)<w^{-1}(p'),\\
	&q'>q,\quad w^{-1}(q')<w^{-1}(q). 
	\end{split}
	\end{equation}
	(Although we will not use the last two inequalities above for $q$ and $q'$, the argument below will work if we use $q$ and $q'$ instead of $p$ and $p'$.)

	We claim 
	\begin{equation} \label{eq:padq}
	p>a>d>q.
	\end{equation}
	Indeed, if $a>p$ (Figure~\ref{fig6p}), then $a>p>d$ by \eqref{eq:pq} and $w^{-1}(a)<w^{-1}(p)<w^{-1}(d)$ since $a$ is the leftmost vertex of $S$ and $d$ is the rightmost vertex of $S$. This contradicts $(a,d)$ being in $\Refw$. Therefore $p>a$. Similarly, if $q>d$ (Figure~\ref{fig6q}), then $a>q>d$ by \eqref{eq:pq} and $w^{-1}(a)<w^{-1}(q)<w^{-1}(d)$ but this contradicts $(a,d)$ being in $\Refw$. Therefore $d>q$. These together with \eqref{eq:ad} prove \eqref{eq:padq}. 
	
	\smallskip
	Now we take two cases. 
	
	(1) The case where $w^{-1}(p)<w^{-1}(q)$ (Figure~\ref{fig7-1}). In this case, we take $b=p$ and $c=q$. Then $w^{-1}(a)<w^{-1}(b)<w^{-1}(c)<w^{-1}(d)$ by assumption and $b>a>d>c$ by \eqref{eq:padq}. Therefore $abcd$ is the pattern $4512$. Moreover, since $(a,d)\in \Refw$, there is no $r$ such that $a>r>d$ and $w^{-1}(a)<w^{-1}(r)<w^{-1}(d)$, which means that $abcd$ is not a subsequence of $45312$.
	\begin{figure}[b!]
		\begin{center}
			\begin{minipage}[t]{0.25\textwidth}
				\centering
				\tikzstyle{vertex} = [circle,draw, fill=white!20, inner sep=0pt, minimum size = 4mm]
				\begin{tikzpicture}[scale = 0.5]
				
				\node (x) [vertex] at (1,4) {$a$};
				\node (y) [vertex] at (3,3) {$p$};
				\node (z) [vertex] at (4,1) {$d$};

				\draw[->-=.5] (x) to (y);
				\draw[->-=.5] (x) -- (z);
				\draw[->-=.5] (y) to (z);
				\end{tikzpicture}
				\caption{}
				\label{fig6p}
			\end{minipage}%
			\begin{minipage}[t]{0.25\textwidth}
				\centering
				\tikzstyle{vertex} = [circle,draw, fill=white!20, inner sep=0pt, minimum size = 4mm]
				\begin{tikzpicture}[scale = 0.5]
				
				\node (x) [vertex] at (1,4) {$a$};
				\node (y) [vertex] at (2,2) {$q$};
				\node (z) [vertex] at (4,1) {$d$};

				\draw[->-=.5] (x) to (y);
				\draw[->-=.5] (x) -- (z);
				\draw[->-=.5] (y) to (z);

				\end{tikzpicture}
				\caption{}
				\label{fig6q}
			\end{minipage}%
			\begin{minipage}[t]{0.5\textwidth}
				\centering
				\tikzstyle{vertex} = [circle,draw, fill=white!20, inner sep=0pt, minimum size = 4mm]
				\begin{tikzpicture}[scale = 0.5]
				
				\node (a) [vertex] at (1,5) {$a$};
				\node (q') [vertex] at (3,3) {$q'$};
				\node (q) [vertex] at (4,1) {$q$};
				\node (p) [vertex] at (2,7) {$p$};
				\node (p') [vertex] at (5,4) {$p'$};
				\node (d) [vertex] at (7,2) {$d$};

				\draw[->-=.5, bend right, red] (a) to node[midway, below, sloped] {\small$P(a,q)$} (q);
				\draw[->-=.5] (q') -- (q);
				
				\draw[->-=.4] (a)--(d);
				\draw[->-=.5] (p)--(p');
				\draw[->-=.5, bend left, blue] (p) to node[midway, above, sloped] {\small$P(p,d)$} (d);
				
				\draw[dashed] (p') -- (q');
				
				\end{tikzpicture}
				\caption{}
				\label{fig7-1}
			\end{minipage}
		\end{center}
	\end{figure}
	
	(2) The case where $w^{-1}(q)<w^{-1}(p)$ (Figure~\ref{fig7-2}). We look at $p'$ defined above (a similar argument will work if we loot at $q'$). We claim
	\begin{equation} \label{eq:pqprime}
	d>p'. 
	\end{equation}
	Indeed, if $p'>d$ (like Figure~\ref{fig7-2}), then $(p',d)\in\Inv(w)$ and hence there is a descending path 
	\begin{equation*} \label{eq:Ppprimed}
	P(p',d)
	\end{equation*} 
	from $p'$ to $d$ and $P(p',d)$ does not contain $p$ because $w^{-1}(p)<w^{-1}(p')$. 
	Therefore the union 
	\[
	(p,p')\cup P(p',d)
	\]
	is a descending path from $p$ to $d$. On the other hand, $P(p,d)$ in \eqref{eq:Ppd} is also a descending path from $p$ to $d$ but does not contain $p'$. Therefore the union 
	\[
	\big((p,p')\cup P(p',d)\big)\cup P(p,d)
	\] 
	contains a cycle and its width is $[w^{-1}(p),w^{-1}(d)]$. Since this width is properly contained in the width $[w^{-1}(a),w^{-1}(d)]$ of $S$, this contradicts the minimality of the width of $S$. Therefore $d>p'$.

	
	By \eqref{eq:padq} and \eqref{eq:pqprime} we have 
	\begin{equation} \label{eq:qadpprime}
	p>a>d>p'\qquad \text{(Figure~\ref{fig8})}.
	\end{equation}
	\begin{figure}[tb]
		\begin{center}
			\begin{minipage}[t]{0.5\textwidth}
				\centering
				\tikzstyle{vertex} = [circle,draw, fill=white!20, inner sep=0pt, minimum size = 4mm]
				\begin{tikzpicture}[scale = 0.5]
				
				\node (a) [vertex] at (1,5) {$a$};
				\node (q') [vertex] at (3,3) {$q'$};
				\node (q) [vertex] at (4,1) {$q$};
				\node (p) [vertex] at (5,7) {$p$};
				\node (p') [vertex] at (6,4) {$p'$};
				\node (d) [vertex] at (8,2) {$d$};
				
				\tikzstyle{every pin edge}=[solid, <-,snake=snake,line before snake=5pt]
				
				\draw[->-=.5, bend right, red] (a) to node[midway, below, sloped] {\small$P(a,q)$} (q);
				\draw[->-=.5] (q') -- (q);
				
				\draw[->-=.4] (a)--(d);
				\draw[->-=.5] (p)--(p');
				\draw[->-=.5, bend left, blue] (p) to node[midway, above, sloped] {\small$P(p,d)$} (d);
				
				\draw[dashed] (p') -- (q');
				\draw[dashed, ->-=.7, purple] (a) to node[midway, pin=85:{$P(a,q')$}, purple] {} (q');
				\draw[dashed, ->-=.3, orange!70!red] (p') to node[midway, pin=270:{$P(p',d)$}, orange!70!red] {} (d);
				\end{tikzpicture}
				\caption{}
				\label{fig7-2}
			\end{minipage}%
			\begin{minipage}[t]{0.5\textwidth}
				\centering
				\tikzstyle{vertex} = [circle,draw, fill=white!20, inner sep=0pt, minimum size = 4mm]
				\begin{tikzpicture}[scale = 0.5]
				
				\node (a) [vertex] at (1,4) {$a$};
				\node (q) [vertex] at (3,2) {$q$};
				\node (q') [vertex] at (2,6) {$q'$};
				\node (p) [vertex] at (5,5) {$p$};
				\node (p') [vertex] at (6,1) {$p'$};
				\node (d) [vertex] at (7,3) {$d$};
				
				\draw[->-=.5] (a)--(q);
				\draw[->-=.5] (q')--(q);
				\draw[->-=.6] (p)--(p');
				\draw[->-=.5] (p)--(d);
				\draw[->-=.4] (a)--(d);
				\draw[dashed] (q')--(p');
				
				\end{tikzpicture}
				\caption{}
				\label{fig8}
			\end{minipage}
		\end{center}
	\end{figure}
	
	Moreover, since $a$ is the leftmost vertex of the cycle $S$ and $d$ is the rightmost vertex of $S$, it follows from \eqref{eq:pq} that 
	\[
	w^{-1}(a)<w^{-1}(p)<w^{-1}(p')<w^{-1}(d).
	\]
	This together with \eqref{eq:qadpprime} shows that if we take $b=p$ and $c=p'$, then $abcd$ is the pattern $4512$. Moreover, since $(a,d)\in \Refw$, there is no $r$ such that $a>r>d$ and $w^{-1}(a)<w^{-1}(r)<w^{-1}(d)$, which means that $abcd$ is not a subsequence of $45312$.
	
	\medskip
	\noindent
	{\bf Case II.} The case where $a$ and $d$ are not joined by an edge, i.e., $(a,d)\notin \Refw$. In this case we will find vertices $b$ and $c$ of $S$ such that $abcd$ is the pattern $4231$. There are two edges of $S$ emanating from $a$ and we take vertices $p,q\in S$ such that $(a,p), (a,q)\in \Refw$. We may assume 
	\begin{equation} \label{eq:wpq}
	w^{-1}(p)<w^{-1}(q)
	\end{equation}
	without loss of generality (Figures~\ref{fig9-1} and~\ref{fig9-2}). Since $(a,p), (a,q)\in \Refw$, we have 
	\begin{equation*} 
	a>p,\qquad a>q.
	\end{equation*}
	We claim $q>p$ so that 
	\begin{equation} \label{eq:aqp}
	a>q>p\qquad(\text{i.e., Figure~\ref{fig9-2} does not occur}). 
	\end{equation}
	Indeed, if $p>q$ (Figure~\ref{fig9-2}), then $a>p>q$ and $w^{-1}(a)<w^{-1}(p)<w^{-1}(q)$ by~\eqref{eq:wpq}. But this contradicts $(a,q)$ being in $\Refw$. 
	
	\smallskip
	Now we take two cases. 
	
	(1) The case where $p>d$ (Figure~\ref{fig10}). 	
	\begin{figure}[b]
		\begin{center}
			\begin{minipage}[t]{0.33\textwidth}
				\centering
				\tikzstyle{vertex} = [circle,draw, fill=white!20, inner sep=0pt, minimum size = 4mm]
				\begin{tikzpicture}[scale = 0.5]
				
				\node (a) [vertex] at (1,4) {$a$};
				\node (p) [vertex] at (2,2) {$p$};
				\node (q) [vertex] at (4,3) {$q$};
				
				\draw[->-=.5] (a) -- (p);
				\draw[->-=.5] (a) -- (q);
				
				\end{tikzpicture}
				\caption{}
				\label{fig9-1}
			\end{minipage}%
			\begin{minipage}[t]{0.33\textwidth}
				\centering
				\tikzstyle{vertex} = [circle,draw, fill=white!20, inner sep=0pt, minimum size = 4mm]
				\begin{tikzpicture}[scale = 0.5]
				
				\node (a) [vertex] at (1,4) {$a$};
				\node (p) [vertex] at (3,3) {$p$};
				\node (q) [vertex] at (4,1) {$q$};
				
				\draw[->- =.5] (a)--(p);
				\draw[->- =.5] (a)--(q);
				
				\end{tikzpicture}
				\caption{}
				\label{fig9-2}
			\end{minipage}%
			\begin{minipage}[t]{0.33\textwidth}
				\centering
				\tikzstyle{vertex} = [circle,draw, fill=white!20, inner sep=0pt, minimum size = 4mm]
				\begin{tikzpicture}[scale = 0.5]
				
				\node (a) [vertex] at (1,4) {$a$};
				\node (p) [vertex] at (2,2) {$p$};
				\node (q) [vertex] at (4,3) {$q$};
				\node (d) [vertex] at (5,1) {$d$};
				
				\draw[->-=.5] (a) -- (p);
				\draw[->-=.5] (a) -- (q);

				\end{tikzpicture}
				\caption{}
				\label{fig10}
			\end{minipage}
		\end{center}
	\end{figure}
	In this case $a>q>p>d$ by \eqref{eq:aqp} and $w^{-1}(a)<w^{-1}(p)<w^{-1}(q)<w^{-1}(d)$ by \eqref{eq:wpq}. Therefore if we take $b=p$ and $c=q$, then $abcd$ is the pattern $4231$.

	(2) The case where $d>p$. We shall observe that this case does not occur. Let $p'$ be the vertex of $S$ such that the path 
	\begin{equation*} \label{eq:Papprime}
	P(p,p')
	\end{equation*} 
	in $S$ joining $p$ and $p'$ is a maximal descending path from $p$ to $p'$ so that if $p''$ is the vertex next to $p'$ and outside of $P(p,p')$, then $(p'',p')\in \Refw$ (Figures~\ref{fig11-1} and~\ref{fig11-2}). By the choice of $p'$ and $p''$ we have 
	\begin{equation} \label{eq:pp}
	w^{-1}(p)\leq w^{-1}(p'),\quad p\geq p'\qquad \text{and}\qquad w^{-1}(p'')<w^{-1}(p'),\quad p''>p'.
	\end{equation}
	
	If $w^{-1}(q)<w^{-1}(p')$ (Figure~\ref{fig11-1}), then there is a descending path $P(q,p')$ from $q$ to $p'$ in $\Gw$ because $q>p'$ by \eqref{eq:aqp} and \eqref{eq:pp}. Since $P(q,p')$ does not contain $p$ by \eqref{eq:wpq}, the union $(a,q)\cup P(q,p')$ is a descending path from $a$ to $p'$ which does not contain $p$. On the other hand, the union $(a,p)\cup P(p,p')$ is also a descending path from $a$ to $p'$ but contains $p$. Therefore the union 
	\[
	\big((a,q)\cup P(q,p')\big)\cup \big((a,p)\cup P(p,p')\big)
	\] 
	contains a cycle and its width is $[w^{-1}(a),w^{-1}(p')]$. But this contradicts the minimality of the width of $S$. Therefore 
	\begin{equation} \label{eq:wpprimeq}
	w^{-1}(p')<w^{-1}(q)\qquad \text{(Figure~\ref{fig11-2})}.
	\end{equation}
	
	By \eqref{eq:pp} and \eqref{eq:wpprimeq}, we have
	\[
	w^{-1}(p'')<w^{-1}(q).
	\]
	If $a>p''$ (like Figure~\ref{fig11-2}), then $(a,p'')\in\Inv(w)$ and hence there is a descending path $P(a,p'')$ from $a$ to $p''$ in $\Gw$. The union $P(a,p'')\cup (p'',p')$ is a descending path from $a$ to $p'$. On the other hand, the union $(a,p)\cup P(p,p')$ is also a descending path from $a$ to $p'$ but does not contain $p''$ because $p''$ is the vertex of $S$ next to $p'$ but outside of $P(p,p')$. Therefore the union 
	$$\big(P(a,p'')\cup (p'',p')\big)\cup \big((a,p)\cup P(p,p')\big)$$
	contains a cycle and its width is $[w^{-1}(a),w^{-1}(p')]$. But this contradicts the minimality of the width of $S$. Therefore $p''>a$ and hence 
	\[
	p''>a>q\qquad\text{(Figure~\ref{fig12})}
	\]
	by \eqref{eq:aqp}. 
	\begin{figure}
		\begin{center}
			\begin{minipage}[t]{0.33\textwidth}
				\centering
				\tikzstyle{vertex} = [circle,draw, fill=white!20, inner sep=0pt, minimum size = 4mm]
				\begin{tikzpicture}[scale = 0.5]
				
				\node (a) [vertex] at (1,6) {$a$};
				\node (p) [vertex] at (2,4) {$p$};
				\node (q) [vertex] at (4,5) {$q$};
				\node (p'') [vertex] at (6,3) {$p''$};
				\node (p') [vertex] at (7,1) {$p'$};
				
				\draw[->-=.5] (a) -- (p);
				\draw[->-=.5] (a) -- (q);
				\draw[->-=.5] (p'') -- (p');
				\draw[->-=.5, bend right, red] (p) to node[midway, below, sloped] {\small$P(p,p')$} (p');				
				\draw[->-=.5, bend right, dashed, blue] (q) to node[midway, below, sloped] {\small$P(q,p')$} (p');				
				\end{tikzpicture}
				\caption{}
				\label{fig11-1}
			\end{minipage}%
			\begin{minipage}[t]{0.33\textwidth}
				\centering
				\tikzstyle{vertex} = [circle,draw, fill=white!20, inner sep=0pt, minimum size = 4mm]
				\begin{tikzpicture}[scale = 0.5]
				
				\node (a) [vertex] at (1,6) {$a$};
				\node (p) [vertex] at (2,4) {$p$};
				\node (q) [vertex] at (7,5) {$q$};
				\node (p'') [vertex] at (5,3) {$p''$};
				\node (p') [vertex] at (6,1) {$p'$};
				
				\draw[->-=.5] (a) -- (p);
				\draw[->-=.5] (a) -- (q);
				\draw[->-=.5] (p'') -- (p');
				\draw[->-=.5, bend right=15, red] (p) to node[midway, below, sloped] {\small$P(p,p')$} (p');				
				\draw[->-=.5, bend left=20, dashed, blue] (a) to node[midway, below, sloped] {\small$P(a,p'')$} (p'');
				
				\end{tikzpicture}
				\caption{}
				\label{fig11-2}
			\end{minipage}%
			\begin{minipage}[t]{0.33\textwidth}
				\centering
				\tikzstyle{vertex} = [circle,draw, fill=white!20, inner sep=0pt, minimum size = 4mm]
				\begin{tikzpicture}[scale = 0.5]
				
				\node (a) [vertex] at (1,6) {$a$};
				\node (p) [vertex] at (2,4) {$p$};
				\node (q) [vertex] at (6,5) {$q$};
				\node (p'') [vertex] at (4,7) {$p''$};
				\node (p') [vertex] at (5,1) {$p'$};
				
				\draw[->-=.5] (a) -- (p);
				\draw[->-=.5] (a) -- (q);
				\draw[->-=.5] (p'') -- (p');
				\draw[->-=.5, bend right=15, red] (p) to node[midway, below, sloped] {\small$P(p,p')$} (p');				
				\draw[->-=.5, bend left=20, dashed, blue] (p'') to node[midway, above, sloped] {\small$P(p'',q)$} (q);		
				
				\end{tikzpicture}
				\caption{}
				\label{fig12}
			\end{minipage}
		\end{center}
	\end{figure}
	Then since $w^{-1}(p'')<w^{-1}(q)$ by \eqref{eq:pp} and \eqref{eq:wpprimeq}, we have $(p'',q)\in\Inv(w)$ and hence there is a descending path $P(p'',q)$ from $p''$ to $q$ in $\Gw$. The union 
	\begin{equation} \label{eq:Ppqpp}
	P(p'',q)\cup (p'',p')
	\end{equation} 
	is a \emph{path} joining $p'$ and $q$ (note that $(p'',p')$ is an edge). Similarly, the union 
	\begin{equation} \label{eq:Papaq}
	(a,p)\cup P(p,p')\cup (a,q)
	\end{equation} 
	is also a \emph{path} joining $p'$ and $q$. However, the path in \eqref{eq:Ppqpp} does not contain the vertex $a$ while the path in \eqref{eq:Papaq} does. Therefore the union of these two paths contains a cycle and its width is $[w^{-1}(a),w^{-1}(q)]$. This again contradicts the minimality of the width of $S$. Thus Case II (2) does not occur. 
\end{proof}

\section{Retraction sequences of a polytope and the Poincar\'{e} polynomial}
\label{section_retraction_sequence_poincare_polynomial}
In this appendix, we compute Poincar\'{e} polynomials of $Y_w$ when $w = 4231$ and $w = 3412$. 
We recall the definition of \textit{retraction sequences} of a polytope from \cite{BNSS, BSS}.	

\begin{definition}
	A \defi{polytopal complex} $\mathcal{C}$ is a finite collection of polytopes in $\mathbb{R}^n$ satisfying:
	\begin{enumerate}
		\item if $E$ is a face of $F$ and $F \in \mathcal{C}$ then $E \in \mathcal{C}$.
		\item If $E, F \in \mathcal{C}$ then $E \cap F$ is a face of both $E$ and $F$.
	\end{enumerate}
\end{definition}	

We denote the underlying set of a polytopal complex $\mathcal{C}$ by $|\mathcal{C}| = \bigcup_{F \in \mathcal C} F$. 
Given an $n$-dimensional polytope $Q$, the polytopal complex $\mathcal{C}(Q)$ is a collection of all faces of $Q$.
For a polytopal complex $\mathcal{C}$, we call a vertex $v \in |\mathcal{C}|$ is a \textit{free vertex} if it has a neighborhood in $|\mathcal{C}|$ that is diffeomorphic to $\mathbb{R}^N_{\geq 0}$ as manifolds with corners for some integer $N$. 

\begin{definition}[{\cite[\S 2]{BSS}}]
	A \defi{retraction sequence} of a polytope $Q$ is a sequence of triples $\{(B_k, E_k, b_k)\}_{1 \leq k \leq \ell}$ defined inductively:
	\begin{itemize}
		\item $B_1 = Q = E_1$ and $b_1$ is a free vertex in $Q$, i.e., a simple vertex.
		\item Given $(B_k, E_k, b_k)$, the next term $(B_{k+1}, E_{k+1}, b_{k+1})$ is defined to be:
		\[
		B_{k+1} := |\{ E \in \mathcal{C}(B_k) \mid b_k \notin V(E)\}|,
		\] 
		$b_{k+1}$ is a free vertex in $B_{k+1}$, and $E_{k+1}$ is the maximal face of $B_{k+1}$ containing $b_{k+1}$. 
	\end{itemize}
	Here, $\ell$ is the number of vertices of $Q$.
\end{definition}

It is known that every simple polytope admits a retraction sequence  (see \cite[Proposition 2.3]{BSS}). 	Suppose that $X$ is a projective toric variety whose underlying polytope is $Q$.
If $Q$ admits a retraction sequence, then we can construct a \textbf{q}-CW complex structure on $X$ using the similar argument in the proof of \cite[Proposition~4.4]{BNSS} (This observation is deeply studied in~\cite{SS}). We note that \textbf{q}-CW complex structure is a generalization of CW complex structure using the quotient of a disc by the action of a finite group rather than ordinary cells (see~\cite{PoddarSarkar10}). 
Since a retraction sequence $\{(B_k,E_k,b_k)\}_{1 \leq k \leq \ell}$ produces only even dimensional \lq\lq cells\rq\rq~of  dimensions $\{2 \dim(E_k)\}_{1\leq k \leq \ell}$, one can show the following:
\begin{proposition}\label{prop_retraction_and_Poincare_polynomial}
	Suppose that $X$ is a projective toric variety whose underlying polytope is $Q$.
	If $Q$ admits a retraction sequence $\{(B_k, E_k, b_k )\}_{k=1}^{\ell}$ then the Poincar\'{e} polynomial 
	of $X$ agrees with
	\[
	\sum_{k=1}^{\ell} t^{2\dim(E_k)}. 
	\]
\end{proposition}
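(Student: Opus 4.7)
The plan is to build a $\textbf{q}$-CW filtration of $X$ indexed by the retraction sequence, with one $\textbf{q}$-cell of real dimension $2\dim(E_k)$ for each $k$, and then read off the Poincar\'e polynomial. Let $\mu \colon X \to Q$ denote the moment map of the projective toric variety $X$, and for each $k$ set $F_k := E_k \setminus B_{k+1}$ with the convention $B_{\ell+1} := \emptyset$. By the inductive definition of the retraction sequence, the sets $F_k$ form a partition of $Q$ and each $F_k$ is a relatively open subset of the face $E_k$ containing the free vertex $b_k$.

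First I would analyze the local picture at the $T$-fixed point $x_{b_k} := \mu^{-1}(b_k)$. The freeness of $b_k$ in $B_k$ (and hence in the maximal face $E_k$ containing it) says that a neighborhood of $b_k$ in $E_k$ is diffeomorphic to $\mathbb{R}^{\dim(E_k)}_{\geq 0}$ as manifolds with corners. Translating through $\mu$, a neighborhood of $x_{b_k}$ in the affine torus-invariant chart associated to $E_k$ is the quotient of $\mathbb{C}^{\dim(E_k)}$ by a finite abelian group fixing only the origin. Using this chart, I would check that $\mu^{-1}(F_k)$ is homeomorphic to the quotient of an open real $2\dim(E_k)$-disc by a finite group action fixing only the center; this is precisely a $\textbf{q}$-cell of real dimension $2\dim(E_k)$ in the sense of \cite{PoddarSarkar10}.

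Next I would assemble these pieces. Define the filtration $X_k := \mu^{-1}(B_k)$, so that $X_1 = X$, $X_{\ell+1} = \emptyset$, $X_{k+1} \subset X_k$, and $X_k \setminus X_{k+1} = \mu^{-1}(F_k)$. I would show that each inclusion $X_{k+1} \hookrightarrow X_k$ is obtained by attaching the single $\textbf{q}$-cell $\mu^{-1}(F_k)$ along an attaching map $S^{2\dim(E_k)-1}/\Gamma_k \to X_{k+1}$ coming from the combinatorial boundary $\partial E_k \cap B_{k+1}$, following the same strategy as the proof of \cite[Proposition 4.4]{BNSS}. Since a $\textbf{q}$-cell of real dimension $2d$ contributes the class $t^{2d}$ to the rational Poincar\'e polynomial (its rational cohomology agrees with that of the disc), and since all cells have even real dimension so that all cellular boundary maps vanish, we conclude that the Poincar\'e polynomial of $X$ equals $\sum_{k=1}^{\ell} t^{2\dim(E_k)}$.

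The main obstacle will be verifying the gluing data: one must show that the closure of each $\mu^{-1}(F_k)$ in $X$ is the continuous image of a closed $2\dim(E_k)$-disc modulo a finite group, with boundary mapping into $X_{k+1}$, and that these attaching maps assemble to a genuine $\textbf{q}$-CW structure. The delicate point is that the faces $E_k$ need not be simple polytopes and $X$ need not be a global orbifold; however, the free-vertex hypothesis at each step guarantees quotient-smoothness locally at $b_k$, which is exactly what the $\textbf{q}$-CW formalism requires, and this will be worked out in detail in the forthcoming~\cite{SS}.
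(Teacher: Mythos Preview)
Your proposal is correct and follows essentially the same approach as the paper: the paper does not give a detailed proof but sketches exactly this argument, namely that a retraction sequence induces a \textbf{q}-CW structure on $X$ with one even-dimensional \textbf{q}-cell of real dimension $2\dim(E_k)$ for each $k$ (by analogy with \cite[Proposition~4.4]{BNSS}, with details deferred to~\cite{SS}), from which the Poincar\'e polynomial follows. Your write-up fleshes out the moment-map filtration $X_k=\mu^{-1}(B_k)$ and the local orbifold-chart analysis more explicitly than the paper does, but the underlying strategy is the same.
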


When $w = 4231$, the Bruhat interval polytope $Q_{id, w^{-1}}$ is given in Figure~\ref{figure_Bruhat_polytope_4231}. The vertices are labeled by the corresponding permutations, i.e., $\mu(uB)=(u^{-1}(1),\dots,u^{-1}(n))$ is labeled by $u$ for $u \leq w$. 
The polytope $Q_{id, w^{-1}}$ is not simple but it admits a retraction sequence as in~Figure~\ref{figure_retraction_seq_4231}. By Proposition~\ref{prop_retraction_and_Poincare_polynomial}, the Poincar\'{e} polynomial of $Y_w$ is $t^6 + 11 t^4 + 7 t^2 + 1$, which agrees with ${A}_w(t^2)$ (see Table~\ref{table_retraction_seq_4231}).
\begin{figure}
	\begin{center}

	\caption{A sequence of vertices $b_k$ and dimensions $\dim(E_k)$.}
	\label{table_retraction_seq_4231}
\end{table} 	

The Bruhat interval polytope $Q_{id, w^{-1}}$ for $w = 3412$ is given in Figure~\ref{figure_Bruhat_polytope_3412}. The vertex $\mu(uB)=(u^{-1}(1),\dots,u^{-1}(n))$ is labeled by $u$ for $u \leq w$.  
The polytope $Q_{id, w^{-1}}$ is not simple but it admits a retraction sequence as in~Figure~\ref{figure_retraction_seq_3412}. By Proposition~\ref{prop_retraction_and_Poincare_polynomial}, the Poincar\'{e} polynomial of $Y_w$ is $t^6 + 7 t^4 + 5t^2 + 1$, which agrees with ${A}_w(t^2)$ (see Table~\ref{table_retraction_seq_3412}).

\begin{center}
	\begin{figure}[H]
		\centering

		\caption{A sequence of vertices $b_k$ and dimensions $\dim(E_k)$.}
		\label{table_retraction_seq_3412}
	\end{table}
\end{center}












\end{document}